\documentclass[11pt]{amsart}
\usepackage[english]{babel}
\usepackage[margin=1.05in]{geometry}
\usepackage{amsmath,amssymb,amsthm,mathtools}
\usepackage{enumitem}
\usepackage{array}
\usepackage{booktabs}
\usepackage{microtype}
\usepackage{xcolor}
\usepackage{graphicx}
\usepackage{tikz}
\usepackage{pgfplots}
\pgfplotsset{compat=1.14}
\usetikzlibrary{arrows.meta,calc,positioning}
\usepackage[colorlinks=true,linkcolor=blue,citecolor=blue,urlcolor=blue]{hyperref}
 \usepackage[all]{xy}

\theoremstyle{plain}
\newtheorem{lem}{Lemma}[section]
\newtheorem{thm}[lem]{Theorem}
\newtheorem{cor}[lem]{Corollary}
\newtheorem{prop}[lem]{Proposition}

\theoremstyle{definition}
\newtheorem*{rem}{Remark}
\newtheorem{ex}[lem]{Example}

\numberwithin{equation}{section}
\newcommand{\NCF}[1]{[\![#1]\!]}

\newcommand{\R}{\mathbb{R}}
\newcommand{\Z}{\mathbb{Z}}
\newcommand{\C}{\mathbb{C}}

\newcommand{\Q}{\mathbb{Q}}
\newcommand{\K}{\mathbb{K}}
\newcommand{\RP}{{\mathbb{RP}}}
\newcommand{\pP}{{\mathbb{P}}}

\newcommand{\cM}{\mathcal{M}}
\newcommand{\Rc}{\mathcal{R}}

\newcommand{\Sc}{\mathcal{S}}

\newcommand{\id}{\textup{Id}}

\newcommand{\gr}{\textup{gr}}

\newcommand{\PSL}{\mathrm{PSL}}
\newcommand{\PGL}{\mathrm{PGL}}

\newcommand{\Bbar}{\overline{B}}
\newcommand{\crr}{\operatorname{cr}}

\title[$q$-cross-ratio]{The $q$-deformed cross-ratio: modular invariants
\\ and Coxeter friezes}

\author{Valentin Ovsienko}

\dedicatory{To Sergei Tabachnikov on his 70th birthday}

\date{}

\begin{document}

\begin{abstract}
We introduce and study a scalar $q$-deformation of the cross-ratio on $\mathbb P^1(\mathbb Q)$.
Our construction is based on the notion of $q$-deformed rational numbers
due to Morier-Genoud and the author.
The $q$-cross-ratio is invariant under $\mathrm{PSL}(2,\mathbb{Z})$,
while elements of determinant $-1$ of $\mathrm{PGL}(2,\mathbb{Z})$
act by $q\mapsto q^{-1}$.
A principal result is its relation to $q$-deformed Coxeter friezes
associated with rational polygons.
The expansion at $q=e^h$ yields an  
algebraically independent sequence of modular invariants and relative invariants, 
although this sequence does not separate modular orbits.
We compute the first two nonconstant coefficients of this expansion explicitly.
\end{abstract}

\maketitle

\thispagestyle{empty}

\tableofcontents

\section{Introduction}

The theory of $q$-deformed rational and real numbers introduced in~\cite{MGO1,MGOexp} 
has a projective-geometric origin: 
it is based on a $q$-deformation of the standard action of $\PSL(2,\Z)$ on the rational projective line.  
The construction extends to $\PGL(2,\Z)$; see~\cite{MGO1,LMGadv,Jou}.  
For a survey, see~\cite{MGOSur}.
The present paper brings these $q$-numbers back to projective geometry.

We study the geometry of the projective line over the field of rational numbers.
The situation changes drastically when the full projective symmetry
group $\PGL(2,\Q)$ is replaced by its arithmetic subgroup
$\PGL(2,\Z)$.  The full group $\PGL(2,\Q)$ acts sharply $3$-transitively on $\pP^1(\Q)$, 
so the ordinary cross-ratio classifies ordered quadruples modulo $\PGL(2,\Q)$.
By contrast, $\PGL(2,\Z)$ is not even $2$-transitive, and arithmetic invariants already occur for pairs. 
This integral structure is also the source of the $q$-deformation,
which connects this geometry with combinatorics and mathematical physics.
Thus the modular $q$-deformation and the
richer invariant theory have a common arithmetic origin.

The most important motivation for this work 
is related to the fact that the classical cross-ratio
is ubiquitous.
Passing to \(\PGL(2,\mathbb Z)\) creates a much richer orbit structure, 
and simultaneously opens the door to the modular \(q\)-deformation studied in the paper.
We hope that the $q$-deformed counterpart also has many interesting applications to
modular geometry, Coxeter friezes, dynamical systems, 
and arithmetic invariants of rational configurations.

The cross-ratio is the essence of projective geometry.
This is why the idea of its $q$-deformation is very tempting.
The $q$-cross-ratio provides a natural way to package infinitely
many algebraically independent modular invariants and relative
invariants.  It should not, however, be expected to restore the
classical one-parameter classification and separate rational quadruples.

\subsection*{The cross-ratio.}
The action of $\PGL(2,\mathbb K)$ on~$\pP^1$ is $3$-transitive, for any field $\mathbb K$.
Hence ordered triples of distinct points carry no nonconstant projective invariant.

The classical cross-ratio is an invariant of four pairwise distinct points
on the projective line~$\pP^1$ over an arbitrary field.
It can be defined explicitly by
\begin{equation}
\label{ClCR}
\crr(x_1,x_2;x_3,x_4)
=
\frac{(x_1-x_3)(x_2-x_4)}
     {(x_1-x_4)(x_2-x_3)},
\end{equation}
where $x_1,x_2,x_3$, and $x_4$ are affine coordinates of the points.
For the point at infinity we use the homogeneous expression below.
The cross-ratio is invariant under the action of $\PGL(2,\mathbb K)$
by fractional-linear transformations.
Write $x=(u:v)$ where $u,v$ are homogeneous coordinates,
and choose an arbitrary lift 
$$
X_i=
\begin{pmatrix}
u_i\\
v_i
\end{pmatrix}
$$
of each $x_i$ to the two-dimensional vector space.
For $i\neq j$, consider the Pl\"ucker determinants
$$
\Delta_{ij}
=
\det(X_i,X_j)
=
u_i v_j-u_j v_i.
$$
The cross-ratio can then be written as
\begin{equation}
\label{ClassEq}
\crr(x_1,x_2;x_3,x_4)
=
\frac{\Delta_{13}\Delta_{24}}
     {\Delta_{14}\Delta_{23}}.
\end{equation}
Formula~\eqref{ClCR} assumes finite affine coordinates, whereas~\eqref{ClassEq} also handles $\infty$.
For more information about the cross-ratio and classical references, see~\cite{Lab,OT}.

\subsection*{Normalized lift and Pl\"ucker invariants.}
For a rational point $x\in\pP^1(\Q)\simeq\Q\cup\{\infty\}$ 
we define the natural ``primitive'' integer lift $X\in\Z^2$.
If $x=\frac{a}{b}$, where $a,b\in\Z$, we will always assume $\gcd(a,b)=1$ and $b>0$.
Then the lift is
\begin{equation}
\label{LiftEq}
X=
\begin{pmatrix}
a\\ b
\end{pmatrix}.
\end{equation}
We also define $X_\infty=
\begin{pmatrix}
1\\ 0
\end{pmatrix}.
$

Given any family of ordered points $x_1,x_2,\ldots$,
consider the arithmetic Pl\"ucker determinants
\begin{equation}
\label{PluEq}
\Delta_{ij}
=
a_i b_j-a_j b_i,
\end{equation}
corresponding to the integer lifts of~$x_i,x_j$.
Their absolute values $|\Delta_{ij}|$
are $\PGL(2,\Z)$-invariants 
(whereas the initial Pl\"ucker determinants $\Delta_{ij}$ can change sign under the $\PGL(2,\Z)$-action).

The $q$-analogues of the Pl\"ucker determinants $\Delta_{ij}$ 
were studied in~\cite{MGO1}.
The $q$-deformation of a rational $\frac{r}{s}$ is a quotient of two coprime polynomials:
$$
\left[\frac{r}{s}\right]_{q}=\frac{\Rc(q)}{\Sc(q)}.
$$
Moreover, if $\frac{r}{s}>1$ then $\Rc$ and~$\Sc$ are monic polynomials with positive integer coefficients.
The polynomial
\begin{equation}
\label{qPluck}
\Delta^q_{ij}:=\Rc_i(q)\Sc_j(q)-\Rc_j(q)\Sc_i(q)
\end{equation}
was studied in~\cite{MGO1}.
In particular, if $\frac{r_i}{s_i}>\frac{r_j}{s_j}$, then $\Delta^q_{ij}$
has positive integer coefficients
(see~\cite{MGO1}, Theorem 2).
Moreover, the polynomials~\eqref{qPluck} are unimodal~\cite{OR}.
The $q$-deformed Pl\"ucker determinants satisfy the usual Plücker identity:
\begin{equation}
\label{PluckIdentq}
\Delta^q_{13}\Delta^q_{24}=
\Delta^q_{12}\Delta^q_{34}+\Delta^q_{14}\Delta^q_{23}.
\end{equation}

\subsection*{Introducing the $q$-cross-ratio.}
The main goal of this paper is to introduce and study a new scalar $q$-deformation of the cross-ratio.
The construction is based on the notion of a $q$-deformed rational number~\cite{MGO1}.

Consider the rational projective line $\pP^1(\Q)\simeq\Q\cup\{\infty\}$, where
$\infty$ is represented by~$\frac10$.
We use the quantization map
$$
[\,\cdot\,]_q:\pP^1(\Q)\longrightarrow\pP^1(\Q(q)),
\qquad [0]_q=0,
$$
which is equivariant for the $q$-deformed $\PSL(2,\Z)$-action.
The full $\PGL(2,\Z)$ action used in this article is semilinear: 
determinant-$(-1)$ elements replace $q$ by $q^{-1}$.
For finite $x$, the value $[x]_q$ is a rational function of $q$, 
represented as a quotient of polynomials
in $\Z[q]$.

The $q$-deformation of the cross-ratio will be defined by the following formula:
\begin{equation}
\label{Crq}
\crr_q(x_1,x_2;x_3,x_4)
:=
\crr\bigl([x_1]_q,[x_2]_q;[x_3]_q,[x_4]_q\bigr).
\end{equation}
The equivariance of $q$-rationals with respect to the $q$-deformed
modular action implies a natural covariance property under the simultaneous
classical $\PGL(2,\Z)$-action on the four rational arguments.
More precisely, the determinant $+1$ elements of $\PGL(2,\Z)$ preserve $\crr_q$, 
whereas determinant $-1$ elements send it to $\crr_{q^{-1}}$.

\begin{rem}
Throughout the paper, $[x]_q$ denotes the original, or right,
$q$-rational.  There is also a natural left $q$-rational, denoted by
$[x]_q^{\flat}$.  Replacing the four right $q$-rationals in~\eqref{Crq}
by left ones gives no independent invariant: the resulting cross-ratio is
$\crr_{q^{-1}}$.  More generally, one may choose independently the right
or left version in each of the four positions.  These mixed
$q$-cross-ratios are discussed in
Section~\ref{subsec:mixed-cross-ratios}.  They contain more information
than $\crr_q$, but even their complete collection does not separate
modular orbits.
\end{rem}

The determinant formula for the ordinary cross-ratio gives
\begin{equation}
\label{PluckCR}
\crr_q(x_1,x_2;x_3,x_4)=
\frac{\Delta^q_{13}\Delta^q_{24}}
     {\Delta^q_{14}\Delta^q_{23}},
\end{equation}
and we deduce the following.
If $x_1,x_2,x_3,x_4\in\pP^1(\Q)$ satisfy $x_1>x_2>x_3>x_4\geq0$, then 
$\crr_q(x_1,x_2;x_3,x_4)$ is a quotient of two polynomials in $\Z[q]$.
After reduction, however, positivity and unimodality of the numerator and denominator need not survive
(a simple example is $\crr_q(1/4,1/6;1/8,0)=1+q^2$).

It follows from~\eqref{PluckIdentq} that under the action of the symmetry group~$S_4$
the $q$-cross-ratio also transforms by the classical anharmonic transformations
$$
z,
\qquad
\frac{1}{z},
\qquad
1-z,
\qquad
\frac{1}{1-z},
\qquad
\frac{z}{z-1},
\qquad
\frac{z-1}{z}.
$$

\subsection*{Relation to $q$-deformed Coxeter frieze patterns.}
One remarkable combinatorial property of the classical cross-ratio
is its relation to Coxeter frieze patterns.
For details, see the survey by Morier-Genoud~\cite{MGsurvey}.
The notion of $q$-deformed Coxeter frieze patterns was introduced in~\cite{MGOqfrieze}.
The most important geometric property 
known to us is the relation between the $q$-deformed cross-ratio
and the $q$-deformed Coxeter frieze patterns.
More precisely, we consider the negative of an anharmonic transform 
$$
\gr_q = \frac{1}{\crr_q -1}.
$$

\begin{thm}
\label{CoxThmIntro}
For the $q$-deformation of an integral Conway--Coxeter frieze, 
with associated rational $n$-gon (with~$n\geq5$) as in Section~\ref{FrizSec}, 
each entry of the second nontrivial row is, 
up to an explicit monomial in $q$, 
the value of $\gr_q$ on four consecutive vertices.
\end{thm}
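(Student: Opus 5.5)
We would use the standard correspondence between Conway--Coxeter friezes and polygons of primitive integer vectors. A frieze of width $n-3$ is encoded by a sequence of lifts $X_1,\dots,X_n\in\Z^2$ with $\det(X_i,X_{i+1})=1$ (indices mod $n$), possibly up to a global sign after going once around. Its entries are $m_{i,j}=\det(X_i,X_j)$. The associated rational $n$-gon has vertices $x_i=a_i/b_i$. In the $q$-deformed setting of~\cite{MGOqfrieze}, the $q$-frieze is built from the $q$-deformed lifts $(\Rc_i,\Sc_i)$ of the $x_i$. The plan is to show that, up to explicit monomials, its entries are the $q$-Plücker determinants $\Delta^q_{ij}$ of~\eqref{qPluck}.

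\textbf{Step 1: edges.} Since consecutive vertices are Farey neighbours, $\Delta^q_{i,i+1}$ is a monomial $\pm q^{\epsilon_i}$. This follows from the $q$-deformed determinant identity for Farey neighbours in~\cite{MGO1}. We record the exponent $\epsilon_i$, which is given by the continued-fraction data of $x_i$ and $x_{i+1}$.

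\textbf{Step 2: identify the rows.} We prove that the first nontrivial row $c_i(q)$ of the $q$-frieze equals $\Delta^q_{i-1,i+1}$ up to a monomial. We then prove that the second nontrivial row is $\Delta^q_{i-1,i+2}$ up to a monomial. Both are obtained by induction along the frieze from the $q$-deformed unimodular rule of~\cite{MGOqfrieze}, which has the form $m_{i,j}m_{i+1,j+1}-q^{\ast}m_{i+1,j}m_{i,j+1}=q^{\ast}$. The Plücker identity~\eqref{PluckIdentq} applied to $(i,i+1,j,j+1)$ gives exactly this relation, with Step 1 supplying the monomial edge terms. Uniqueness of the solution of the frieze recursion from the boundary rows then identifies the two families.

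\textbf{Step 3: compute $\gr_q$.} By~\eqref{PluckCR} and~\eqref{PluckIdentq}, for four consecutive vertices,
\[
\crr_q(x_{i},x_{i+1};x_{i+2},x_{i+3})-1=\frac{\Delta^q_{i,i+2}\Delta^q_{i+1,i+3}-\Delta^q_{i,i+3}\Delta^q_{i+1,i+2}}{\Delta^q_{i,i+3}\Delta^q_{i+1,i+2}}=\frac{\Delta^q_{i,i+1}\Delta^q_{i+2,i+3}}{\Delta^q_{i,i+3}\Delta^q_{i+1,i+2}}.
\]
Hence
\[
\gr_q=\frac{\Delta^q_{i,i+3}\,\Delta^q_{i+1,i+2}}{\Delta^q_{i,i+1}\,\Delta^q_{i+2,i+3}}.
\]
By Step 1 the three edge determinants in this expression are monomials. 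So $\gr_q$ equals $\Delta^q_{i,i+3}$ times a monomial, and by Step 2 this is the second-row entry times a monomial. The hypothesis $n\ge5$ ensures that $x_i$ and $x_{i+3}$ are distinct and not adjacent, so the quadruple is nondegenerate and the entry lies in a genuine second row.

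\textbf{Main obstacle.} We expect the main difficulty to be Step 2, specifically keeping track of the exact monomial exponents and signs. The cyclic ordering of the polygon may leave the region $x_1>\dots>x_n\ge0$, and the vertex $\infty$ together with the semilinear $q\mapsto q^{-1}$ behaviour can flip orientation. Our first approach would be to normalise by a $\PSL(2,\Z)$ element, using the invariance of $\crr_q$, so that the four relevant vertices are $\infty>x_{i+1}>x_{i+2}>x_{i+3}$ or are positive and decreasing. We would then read the exponents off the $q$-continued fraction, matching the convention of~\cite{MGOqfrieze}.
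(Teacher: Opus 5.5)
Your Step 3 is correct and matches the paper's final computation, and Step 1 is fine. The gap is in Step 2, which carries the whole content of the theorem.

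\textbf{Step 2 cannot establish the quiddity row.} The $q$-unimodular rule determines a row from the two rows above it. It can propagate an identification downward, but it cannot establish the quiddity row itself. The diamond through the rows of $0$'s, $1$'s and the quiddity reads $1\cdot 1-0\cdot C_{i,i}=1$, so it imposes nothing on $C_{i,i}$; the quiddity is free initial data. Nor is there a recursion running from the top row of $1$'s and the bottom row of monomials: solving the diamond rule upward from the bottom would require dividing by the row of zeros. So your claim that $[c_i]_q$ equals a determinant of vertices two apart, up to a monomial, has no proof in your plan. It needs an input linking the $q$-rationals of consecutive vertices to the $q$-quiddity.

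\textbf{``Up to a monomial'' is too weak.} Even granting the first row, you cannot pass to the second row with unspecified monomials. Suppose $\Delta^q_{i-2,i}=\mu[c_i]_q\Delta^q_{i-2,i-1}$ and $\Delta^q_{i-1,i+1}=\mu'[c_{i+1}]_q\Delta^q_{i-1,i}$ for unknown monomials $\mu,\mu'$. The Pl\"ucker identity then gives $\Delta^q_{i-2,i+1}/\Delta^q_{i-2,i-1}=\mu\mu'\bigl([c_i]_q[c_{i+1}]_q-\nu\bigr)$, where $\nu=\Delta^q_{i,i+1}/(\mu\mu'\Delta^q_{i-1,i})$. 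This is a monomial multiple of $C_{i,i+1}=[c_i]_q[c_{i+1}]_q-q^{c_i-1}$ only if $\nu=q^{c_i-1}$ exactly. You name this bookkeeping as the main obstacle but do not resolve it, and the statement asks for an explicit monomial.

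\textbf{The missing idea: a $q$-deformed three-term recurrence for the vertices.} The paper fixes the representative $x_{-1}=\infty$, $x_0=1$, $x_j=\NCF{1,c_1,\ldots,c_j}$. It defines $V_j=[c_j]_qV_{j-1}-q^{c_{j-1}-1}V_{j-2}$ from $V_{-1}=(1,0)^T$, $V_0=(1,1)^T$, and formula~\eqref{qa} shows that $V_j$ represents $[x_j]_q$.
\begin{itemize}
\item The recurrence runs along diagonals, not down rows. The frieze diagonals satisfy the same recurrence in $j$ with boundary values $0$ and $1$, which gives $C_{i,j}=\Delta^q_{i-2,j}/\Delta^q_{i-2,i-1}$ exactly.
\item The recurrence also gives $\Delta^q_{j,j+1}=q^{c_j-1}\Delta^q_{j-1,j}$, which fixes the monomial in your Step 3 formula as $q^{c_i-1}$.
\item Working with these unrescaled vectors, rather than individually normalized coprime lifts, is what makes the relations exact. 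The individual rescalings cancel in Step 3 anyway.
\end{itemize}

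\textbf{A shorter alternative that skips Step 2.} Use the paper's indexing $X_j=c_jX_{j-1}-X_{j-2}$. Since $\gr_q$ is $\PSL(2,\Z)$-invariant and $\det(X_{i-2},X_{i-1})=1$, you may move $(x_{i-2},x_{i-1})$ to $(\infty,0)$. Then $x_i=-1/c_i$ and $x_{i+1}=-c_{i+1}/(c_ic_{i+1}-1)$; here $c_ic_{i+1}\neq1$ for $n\geq5$. Consequently $[x_i]_q=-1/(q[c_i]_q)$ and $[x_{i+1}]_q=S_qT_q^{c_i}S_q\bigl([c_{i+1}]_q\bigr)$. A two-line computation gives $\gr_q=q^{1-c_i}\bigl([c_i]_q[c_{i+1}]_q-q^{c_i-1}\bigr)=q^{1-c_i}C_{i,i+1}$. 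The last equality is a single application of the diamond rule.
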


A more detailed statement is formulated in Theorem~\ref{thm:second-row}.
This theorem shows that the scalar deformation~\eqref{Crq} is compatible with the $q$-frieze geometry.

\subsection*{$\PGL(2,\Z)$-invariance, expansion around $q=1$.}
Since $\crr_1=\crr$, it is natural to consider the function $\crr_q$ in the vicinity of $q=1$.

Putting $q=e^h$, we can write the $q$-deformed cross-ratio as a 
Taylor expansion in the logarithmic coordinate $h=\log q$:
\begin{equation}
\label{CrT}
\crr_{e^h}
=
\sum_{n\geq0}\frac{\crr^{(n)}h^n}{n!}
=
\crr+\crr^\prime h+\frac{\crr''h^2}{2}+\frac{\crr'''h^3}{6}+\cdots.
\end{equation}
Indeed, every finite $q$-rational $[x]_q$ is a rational function in~$q$ regular at $q=1$,
and $[x]_1=x$.
The coefficients $\crr^{(n)}$ are regarded as functions 
on the set of ordered quadruples of pairwise distinct rational points.

\begin{thm}
\label{TayPro}
(i)
For every $M\in\PGL(2,\Z)$, one has the identity
\begin{equation}
\label{PGLCrq}
\crr_q\bigl(M(x_1),M(x_2);M(x_3),M(x_4)\bigr)
=
\crr_{q^{\det(M)}}(x_1,x_2;x_3,x_4).
\end{equation}

(ii)
More precisely, for every $n\geq0$, the coefficient $\crr^{(n)}$ of the
Taylor series~\eqref{CrT} satisfies
\begin{equation}
\label{PGLTaylor}
\crr^{(n)}\bigl(M(x_1),M(x_2);M(x_3),M(x_4)\bigr)
=
\det(M)^n\,\crr^{(n)}(x_1,x_2;x_3,x_4).
\end{equation}
\end{thm}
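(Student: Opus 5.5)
The plan is to reduce (i) to the equivariance of the quantization map on generators of $\PGL(2,\Z)$, and then derive (ii) formally from (i).

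\emph{Part (i).} Recall the defining equivariance of $q$-rationals from~\cite{MGO1,LMGadv,Jou}. For the generators $T(x)=x+1$ and $S(x)=-1/x$ of $\PSL(2,\Z)$ one has
$$
[x+1]_q=q[x]_q+1,
\qquad
\bigl[-\tfrac1x\bigr]_q=-\frac{1}{q[x]_q}.
$$
In other words, $[M(x)]_q=M_q([x]_q)$, where $M_q\in\PGL(2,\Z[q^{\pm1}])$ is the $q$-deformed matrix attached to $M$. This holds for every $M\in\PSL(2,\Z)$, since $M\mapsto M_q$ is a homomorphism. By the same references, the reflection $R(x)=-x$ (determinant $-1$) satisfies $[-x]_q=-q^{-1}[x]_{q^{-1}}$. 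This is again a fractional-linear map, here applied to $[x]_{q^{-1}}$.

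Write $\mathcal C_q(y)$ for $\crr_q$ evaluated on the quadruple $y$; since $\crr_q(x)=\crr([x]_q)$, this is the classical cross-ratio of the $q$-rationals of $y$. The key point is that the classical cross-ratio~\eqref{ClCR} is invariant under all of $\PGL(2,\K)$ with $\K=\Q(q)$. Two cases follow.

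\begin{itemize}
\item For $\det M=1$, we get $\mathcal C_q(Mx)=\crr\bigl(M_q[x]_q\bigr)=\mathcal C_q(x)$.
\item For $\det M=-1$, write $M=M'R$ with $M'\in\PSL(2,\Z)$. Then $\mathcal C_q(Mx)=\mathcal C_q(Rx)$, and
$$
\crr\bigl(-q^{-1}[x_i]_{q^{-1}}\bigr)=\crr\bigl([x_i]_{q^{-1}}\bigr)=\crr_{q^{-1}}(x).
$$
\end{itemize}

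Points equal to $\infty$ require care. I will handle them with the homogeneous form~\eqref{ClassEq}, together with $[\infty]_q=\infty$ and the fact that $M_q$ acts projectively. I would also check that $(MM')_q=M_qM'_q$ holds up to scalars in the determinant $-1$ case. Both issues are harmless, because only projective invariance of $\crr$ over $\Q(q)$ is used.

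\emph{Part (ii).} Substitute $q=e^h$ into (i). Since $q^{\det M}=e^{\det(M)\,h}$, the two sides of~\eqref{PGLCrq} are the two power series
$$
\sum_{n\ge 0}\crr^{(n)}(Mx)\,\frac{h^n}{n!}
\quad\text{and}\quad
\sum_{n\ge 0}\crr^{(n)}(x)\,\frac{(\det(M)\,h)^n}{n!}.
$$
Both are convergent near $h=0$: all $q$-rationals are regular at $q=1$, and the denominator of the cross-ratio is nonzero at $q=1$ because the points are distinct. Comparing coefficients of $h^n$ gives~\eqref{PGLTaylor}.

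The main obstacle is justifying the determinant $-1$ transformation rule $[-x]_q=-q^{-1}[x]_{q^{-1}}$, and consistency of the semilinear action, from the cited literature. I would verify it on $x=r/s$ using the continued fraction definition, or the known symmetry $\Rc\leftrightarrow q$-reversal, if it is not simply quoted.
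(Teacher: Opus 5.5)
Your proposal is correct and follows the paper's proof essentially verbatim: you handle the $\PSL(2,\Z)$ case by projective invariance of $\crr$ over $\Q(q)$, the determinant $-1$ case by factoring off one orientation-reversing generator that acts by a M\"obius map composed with $q\mapsto q^{-1}$, and part (ii) by substituting $q=e^h$ and comparing coefficients. The only difference is that you use the reflection $x\mapsto -x$ where the paper uses $\mathcal I(x)=1/x$; the identity you flag as the main obstacle follows at once from the paper's generators, since $-x=S(\mathcal I(x))$ gives $[-x]_q=S_q\bigl(1/[x]_{q^{-1}}\bigr)=-q^{-1}[x]_{q^{-1}}$.
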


Thus the even coefficients of the $h$-expansion, $\crr^{(2m)}$, are genuine $\PGL(2,\Z)$-invariants,
whereas the odd ones, $\crr^{(2m+1)}$, are relative invariants with character~$\det$.

\begin{rem}
The logarithmic coordinate $h=\log q$ is particularly natural here:
the involution $q\mapsto q^{-1}$ becomes simply $h\mapsto-h$.
Thus the full $\PGL(2,\Z)$-action is diagonal on the coefficients
of~\eqref{CrT}, with parity determined by the order.
The coefficient of order~$0$ is the classical cross-ratio, while the first
coefficient $\crr'$ is already a nontrivial relative invariant.
\end{rem}

\subsection*{Algebraic independence.}
Functions $J_0,\ldots,J_{N-1}$ on a discrete set are called 
algebraically independent over $\C$ if there is no nonzero polynomial $P$ in $N$ variables such that
$$
P(J_0,\ldots,J_{N-1})\equiv0.
$$
If the first $N$ terms of an infinite sequence $J_0,J_1,\ldots$ are algebraically independent for all $N$,
then this sequence is algebraically independent. 

A naive dimension count---which is not applicable to a discrete symmetry group---might 
suggest only finitely many independent relative invariants $\crr^{(n)}$.
In fact, we have the following statement.

\begin{thm}
\label{InvThm}
The infinite sequence of (relative) invariants
$$
\crr,\crr',\crr'',\crr''',\ldots
$$
is algebraically independent over~$\C$.  
\end{thm}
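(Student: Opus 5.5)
The idea is to restrict the invariants to a one-parameter family of quadruples on which the $q$-cross-ratio is explicit, and there show that each $\crr^{(n)}$ has a genuinely new kind of growth. Take the family
$$
x_1=n+1 \ \text{(or } \infty),\qquad x_2=\tfrac{1}{k},\qquad x_3=0,\qquad x_4\ \text{fixed},
$$
or, more flexibly, a family depending on several integer parameters $k_1,\dots,k_N$. For these points the $q$-rationals are explicit:
$$
[n]_q=\frac{1-q^n}{1-q},\qquad \Bigl[\tfrac1k\Bigr]_q=\frac{q^{k-1}(1-q)}{1-q^{k}}\quad(\text{up to the normalization in } [MGO1]).
$$
So $\crr_{e^h}$ is an explicit combination of expressions $\frac{e^{kh}-1}{e^h-1}$.

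The next step is to expand in $h$ for large parameters. The key observation is that $\frac{e^{kh}-1}{e^h-1}$ expanded in $h$ has $n$-th coefficient a polynomial in $k$ of degree $n+1$, namely a Bernoulli-type polynomial. Hence $\crr^{(n)}$, restricted to the family, is a rational function of the parameters whose "weight" (total degree in $k$, normalized by the degree of $\crr$) increases strictly with $n$.

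Algebraic independence then follows from a Jacobian criterion. If $P(\crr,\dots,\crr^{(N-1)})\equiv0$ on all rational quadruples, it vanishes on the integer points of an $N$-parameter family $(k_1,\dots,k_N)\in\Z_{>0}^N$. Each $\crr^{(j)}$ is the restriction of a rational (or analytic) function $F_j(k_1,\dots,k_N)$ obtained by treating the $k_i$ as real variables through the closed formulas above. A polynomial vanishing on a Zariski-dense set of integer points vanishes identically. So it suffices to show that $F_0,\dots,F_{N-1}$ are algebraically independent as functions of $N$ real variables, that is, that their Jacobian determinant is not identically zero.

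To build an $N$-parameter family, I would use quadruples such as
$$
\Bigl(\infty,\ [a_1,\dots,a_m],\ 0,\ -1\Bigr)
$$
with continued-fraction entries $a_i$ free, or use four points whose Plücker determinants are independent parameters. Via \eqref{PluckCR} and the matrix formulas for $q$-continued fractions, $\crr_q$ becomes a rational function in the $q^{a_i}$ and $q$. Substituting $q=e^h$ and $Q_i=e^{a_ih}$, then viewing the $a_i$ as continuous, makes the $F_j$ polynomial-exponential in $a_i$, which makes the independence plausible.

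The Jacobian step is the main obstacle. First I would compute leading terms in a degeneration, sending $a_1\gg a_2\gg\cdots$, so that the Jacobian matrix becomes triangular: $\partial F_j/\partial a_i$ would be dominated by the highest powers of $a_i$, and $a_i$ first appears at order $j\ge i$. If triangularity is hard to see, I would fall back on an inductive argument. Assume $\crr,\dots,\crr^{(N-1)}$ are independent on some family, then adjoin a new parameter. This parameter should enter $\crr^{(N)}$ with a degree in $k$ exceeding what any polynomial in the lower coefficients can produce on a subfamily where those lower coefficients stay bounded, for instance where $\crr$ is constant. Fixing the lower invariants and exhibiting variation of $\crr^{(N)}$ would prove that $\crr^{(N)}$ is transcendental over the field they generate.
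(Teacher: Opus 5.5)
There is a genuine gap. Your overall architecture is the same as the paper's:
\begin{itemize}
\item restrict to quadruples containing a point $x$ given by a continued fraction;
\item promote the integer entries to continuous variables;
\item note that the $h$-Taylor coefficients are \emph{rational} functions of those variables;
\item use Zariski density of the integer points;
\item conclude from a nonvanishing Jacobian.
\end{itemize}
Rationality is what makes the density step work. Your alternatives ``(or analytic)'' and ``polynomial-exponential'' would not suffice, since e.g.\ $e^{2\pi i a}-1$ vanishes on all of $\Z$. The coefficients really are rational, because the $h$-coefficients of $e^{ah}$ are polynomials in $a$.

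The gap is that you never establish the nonvanishing of the Jacobian, which is the whole content of the theorem. The ``weight'' heuristic cannot substitute for it, since different degrees in $k$ say nothing about algebraic independence (consider $k$ and $k^2$). Your degeneration argument rests on a false premise. It is not true that the entry $a_i$ first appears at order $j\geq i$: every entry already enters $\crr$ itself through the classical value of $x$ (on your family, $\crr=1+1/x$). So the Jacobian is not triangular. Any asymptotic triangularity for $a_1\gg a_2\gg\cdots$ would have to be proved, and leading terms in a determinant can cancel. The inductive fallback is not yet an argument either. Variation of $\crr^{(N)}$ along one fiber of the lower invariants gives transcendence only if that fiber lies over a point outside the common zero set of the coefficients of a putative relation. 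That is again a genericity (Jacobian-type) statement.

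The paper fills exactly this hole with an explicit computation at a single point. It uses the family $(\infty,0;1,x)$, on which $\crr_{e^h}=[x]_{e^h}$, and writes $x=\NCF{c_0,\ldots,c_N}$. It then evaluates at $(c_0,\ldots,c_N)=(1,2,\ldots,2)$, where the tails are explicit: $F_j=[N-j+2]_q/[N-j+1]_q$. At that point $\partial F_0/\partial c_j$ is a common invertible series in $h$ times $q^{-r}[r]_q([r-1]_q+q^r)$, with $r=N-j+1$. Multiplying by $(e^h-1)^2/h^2$ turns this into a short exponential sum. Its $h^i$-coefficient is an explicit polynomial in $r$ of definite parity that vanishes at $r=0$. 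The determinant then reduces to a (generalized) Vandermonde determinant at $r=1,\ldots,N+1$, which is nonzero. Without some such concrete computation, your proposal shows only that the theorem would follow if a certain Jacobian were nonzero.
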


In particular,

\begin{cor}
\label{InvCorol}
The genuine $\PGL(2,\Z)$-invariants $\crr, \crr^{(2)}, \crr^{(4)}, \ldots$ 
are algebraically independent over $\C$.
\end{cor}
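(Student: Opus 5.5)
The statement to prove is Corollary~\ref{InvCorol}. It follows directly from Theorem~\ref{InvThm}, because algebraic independence passes to subsequences.

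Let $J_n=\crr^{(n)}$, regarded as functions on ordered quadruples of pairwise distinct rational points. Suppose there were a nonzero polynomial $P\in\C[y_0,y_1,\ldots,y_m]$ with
$$P(\crr,\crr^{(2)},\ldots,\crr^{(2m)})\equiv 0$$
on all quadruples. Define $Q\in\C[z_0,\ldots,z_{2m}]$ by $Q(z_0,\ldots,z_{2m})=P(z_0,z_2,\ldots,z_{2m})$. Then $Q$ is nonzero: $P$ has a nonzero coefficient at some monomial $y_0^{k_0}\cdots y_m^{k_m}$, and $Q$ has the same nonzero coefficient at $z_0^{k_0}z_2^{k_1}\cdots z_{2m}^{k_m}$. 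Moreover $Q(J_0,\ldots,J_{2m})\equiv 0$. This contradicts the algebraic independence of the first $2m+1$ terms $J_0,\ldots,J_{2m}$ guaranteed by Theorem~\ref{InvThm}. Since $m$ was arbitrary, the whole sequence $\crr,\crr^{(2)},\crr^{(4)},\ldots$ is algebraically independent in the sense defined above.

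That these are genuine $\PGL(2,\Z)$-invariants is the case of even $n$ in Theorem~\ref{TayPro}(ii): there $\det(M)^{2m}=1$, so \eqref{PGLTaylor} gives $\crr^{(2m)}(M(x_1),M(x_2);M(x_3),M(x_4))=\crr^{(2m)}(x_1,x_2;x_3,x_4)$.

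The real content, and the main obstacle, lies in Theorem~\ref{InvThm} itself, which we are allowed to assume; the corollary adds no difficulty.
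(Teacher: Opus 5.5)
Your proof is correct and follows the paper's route: the paper states the corollary with ``In particular'' right after Theorem~\ref{InvThm}, because a subfamily of an algebraically independent sequence is itself algebraically independent. Your substitution $Q(z_0,\ldots,z_{2m})=P(z_0,z_2,\ldots,z_{2m})$ makes that step explicit, and the genuine invariance of the even coefficients is the case $\det(M)^{2m}=1$ of Theorem~\ref{TayPro}(ii).
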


\subsection*{The lower-order coefficients.}
For rational quadruples $(x_1,x_2,x_3,x_4)$,
the first two nonclassical coefficients
$\crr'$ and $\crr''$ in the $h = \log q$ expansion~\eqref{CrT} can be calculated explicitly.
The computations use the results of Lasker~\cite{Las}.
We use Lasker's explicit formulas and give short recurrence-based verifications for completeness.

The infinitesimal part of $\crr_q$ can be written in terms of the Pl\"ucker determinants.
\begin{thm}
\label{FirstCFThm}
For a quadruple $x_1,x_2,x_3,x_4$ of pairwise distinct (finite) rationals,
the first coefficient $\crr'$ in~\eqref{CrT} is determined explicitly by
\begin{equation}
\label{CrT1}
\frac{\crr'(x_1,x_2;x_3,x_4)}{\crr(x_1,x_2;x_3,x_4)}
=
\frac{1}{2\Delta_{34}}
\left[
\frac{(\Delta_{14}-\Delta_{13})^2-\Delta_{34}^2}
     {\Delta_{13}\Delta_{14}}
-
\frac{(\Delta_{24}-\Delta_{23})^2-\Delta_{34}^2}
     {\Delta_{23}\Delta_{24}}
\right].
\end{equation}
\end{thm}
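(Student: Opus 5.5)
The plan is to work with the Plücker form \eqref{PluckCR} of the $q$-cross-ratio rather than with \eqref{Crq} directly. Put $q=e^h$ and take the logarithmic derivative at $h=0$:
$$
\frac{\crr'}{\crr}
=\frac{\dot\Delta_{13}}{\Delta_{13}}+\frac{\dot\Delta_{24}}{\Delta_{24}}
-\frac{\dot\Delta_{14}}{\Delta_{14}}-\frac{\dot\Delta_{23}}{\Delta_{23}},
\qquad
\dot\Delta_{ij}:=\frac{d}{dh}\Delta^{e^h}_{ij}\Big|_{h=0}.
$$
Here I use that $\Rc_i(1)=a_i$ and $\Sc_i(1)=b_i$, so $\Delta^q_{ij}|_{q=1}=\Delta_{ij}$. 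Differentiating \eqref{qPluck} gives
$$
\dot\Delta_{ij}=\dot\Rc_i b_j+a_i\dot\Sc_j-\dot\Rc_j b_i-a_j\dot\Sc_i,
$$
so everything reduces to knowing the first $h$-derivatives $\dot\Rc(1),\dot\Sc(1)$ of the numerator and denominator of a $q$-rational.

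For these I would take Lasker's explicit formulas~\cite{Las} as given. I would check them by induction along the continued fraction, using the $q$-deformed generators $x\mapsto qx+1$ and $x\mapsto 1/x$ acting on the pair $(\Rc,\Sc)$. Since the $h$-derivative of a $q$-integer $[n]_{e^h}$ at $0$ is $n(n-1)/2$, the derivatives are quadratic-type expressions in the convergents.

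The key structural point is that each index $i$ occurs exactly once in the numerator $\Delta_{13}\Delta_{24}$ and exactly once in the denominator $\Delta_{14}\Delta_{23}$. Hence any contribution to $\dot\Delta_{ij}/\Delta_{ij}$ of the form $\alpha_i+\alpha_j$ cancels. I would therefore aim to prove a decomposition
$$
\frac{\dot\Delta_{ij}}{\Delta_{ij}}=\alpha_i+\alpha_j+\beta_{ij},
$$
where $\alpha_i$ depends only on $x_i$ (this is where the non-local, Dedekind-sum-like part of Lasker's formula should sit). The remainder $\beta_{ij}$ should be a rational expression in the lifts $X_i,X_j$ that is bilinear enough that the alternating sum $\beta_{13}+\beta_{24}-\beta_{14}-\beta_{23}$ collapses.

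Once the $\alpha$'s are gone, I would rewrite the remaining terms using the classical Plücker relation $\Delta_{13}\Delta_{24}=\Delta_{12}\Delta_{34}+\Delta_{14}\Delta_{23}$ and the three-term identities $\Delta_{ij}X_k-\Delta_{ik}X_j+\Delta_{jk}X_i=0$. These should eliminate $\Delta_{12}$ and all explicit coordinates, producing the two brackets in \eqref{CrT1}. Each bracket is symmetric in $3\leftrightarrow4$ up to sign and involves only one of $x_1,x_2$, matching the separation of the sum into a ``$1$-part'' and a ``$2$-part''.

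As a consistency check, I would use Theorem~\ref{TayPro}(ii) with $n=1$. Both sides of \eqref{CrT1} must change sign under determinant $-1$ elements and stay invariant under $\PSL(2,\Z)$; on the right this holds because the $\Delta_{ij}$ flip sign together under $\det=-1$ and the expression is odd of degree $-1$. I would also test small cases, for example $x_4=0$ and $x_3=1/n$.

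The main obstacle is the first step: extracting from Lasker's formulas a clean decomposition $\dot\Delta_{ij}=(\alpha_i+\alpha_j)\Delta_{ij}+\beta_{ij}\Delta_{ij}$, that is, isolating the non-local part of $\dot\Rc,\dot\Sc$ as a term proportional to $(a_i,b_i)$ itself. If that fails, I would fall back on equivariance. Using Theorem~\ref{TayPro}(ii), I would move $x_4$ to $0$ (finite, via $\PSL(2,\Z)$), verify \eqref{CrT1} there by direct computation with the recurrence, and then argue that the right-hand side of \eqref{CrT1} transforms the same way under the generators.
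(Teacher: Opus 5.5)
Your strategy is the paper's own, repackaged through the Pl\"ucker form. As written, though, it is not yet a proof. The step you call the main obstacle is immediate. The step you treat as routine is where all of \eqref{CrT1} comes from, and there you only say the algebra ``should'' work.

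On the first point: $\Delta^q_{ij}=\Sc_i\Sc_j\bigl([x_i]_q-[x_j]_q\bigr)$, and the first $h$-derivative at $h=0$ equals the first $q$-derivative at $q=1$. Hence
\[
\frac{\dot\Delta_{ij}}{\Delta_{ij}}
=\frac{\dot\Sc_i}{b_i}+\frac{\dot\Sc_j}{b_j}
+\frac{D_1(x_i)-D_1(x_j)}{x_i-x_j}.
\]
So $\alpha_i=\dot\Sc_i/b_i$ automatically absorbs any normalization-dependent or non-local part of the numerator and denominator, and it cancels. You never need $\dot\Rc$ and $\dot\Sc$ separately. You only need Lasker's formula for the quotient, $D_1(a/b)=\frac{x^2-x+1}{2}-\frac{1}{2b^2}$. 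The paper re-proves it by checking that this expression vanishes at $0$ and satisfies $D_1(x+1)=D_1(x)+x$ and $D_1(-1/x)=1/x+D_1(x)/x^2$, the derivatives of \eqref{RecEq}. There is no Dedekind-sum term at first order; Apostol's $s_3$ enters only through $D_2$. The quadratic part of $D_1$ contributes $\frac{x_i+x_j-1}{2}$ to $\beta_{ij}$. This is again of single-point type, so it cancels too, and you arrive exactly at \eqref{CoorEq}.

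The surviving terms $\frac{b_i^2-b_j^2}{2b_ib_j\Delta_{ij}}$ do not collapse. You must actually show that their alternating sum equals the right-hand side of \eqref{CrT1}. The paper groups the two terms containing $x_1$ and eliminates $b_1$ using $b_3\Delta_{14}-b_4\Delta_{13}=b_1\Delta_{34}$, which is the second coordinate of your three-term identity. This gives
\[
\frac{b_1/b_3-b_3/b_1}{\Delta_{13}}-\frac{b_1/b_4-b_4/b_1}{\Delta_{14}}
=\frac{(\Delta_{14}-\Delta_{13})^2-\Delta_{34}^2}{\Delta_{34}\Delta_{13}\Delta_{14}}
-\frac{(b_3-b_4)^2}{b_3b_4\Delta_{34}}.
\]
The identity for the $x_2$-terms follows by $(1,3,4)\mapsto(2,4,3)$, which flips the sign of $\Delta_{34}$. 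So the two residual $b$-terms cancel and \eqref{CrT1} follows; the Pl\"ucker relation involving $\Delta_{12}$ is never used.

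Two smaller remarks.
\begin{itemize}
\item Your fallback is not a reduction: the case $x_4=0$ still requires $D_1$ at three arbitrary rationals, which is the same input as the main route.
\item In your equivariance check, the normalized lift of $M(x_i)$ is $\pm MX_i$ with an $i$-dependent sign. So $\Delta_{ij}\mapsto\epsilon_i\epsilon_j\det(M)\,\Delta_{ij}$, not a uniform sign change. The right-hand side of \eqref{CrT1} does survive this: flipping $X_3$ or $X_4$ sends each bracket term $T$ to $-(T+4)$ while $\Delta_{34}$ changes sign. But this has to be verified, not assumed.
\end{itemize}
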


Note the importance of the normalization issue.  
The classical formula~\eqref{ClassEq}
is independent of the choice of nonzero lifts $X_i$.
For formula~\eqref{CrT1}, however, this is not the case.
The lifts of the points must be normalized as in~\eqref{LiftEq}.

By Theorem~\ref{TayPro}, the expression~\eqref{CrT1} is a relative
$\PGL(2,\Z)$-invariant with character~$\det$:
\begin{equation}
\label{PGLFirst}
\crr'\bigl(M(x_1),M(x_2);M(x_3),M(x_4)\bigr)
=
\det(M)\,\crr'(x_1,x_2;x_3,x_4).
\end{equation}
Note that $\crr'$ is not invariant with respect to the action of the much larger rational
projective group $\PGL(2,\Q)$.

The second derivative $\crr''$ in~\eqref{CrT} can also be calculated.
However, in contrast with $\crr'$, it is not determined by six normalized Pl\"ucker determinants.

\begin{prop}
\label{PProp}
The coefficient $\crr''$ is not determined by the six signed Pl\"ucker
determinants
\[
(\Delta_{12},\Delta_{13},\Delta_{14},
  \Delta_{23},\Delta_{24},\Delta_{34})
\]
of the normalized primitive lifts.  In particular, there is no formula for
$\crr''$ involving only these six integers.
\end{prop}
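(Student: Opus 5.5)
The plan is to exhibit two explicit ordered quadruples of pairwise distinct rationals with the same six signed Plücker determinants of their normalized lifts~\eqref{LiftEq}, and then check that their values of $\crr''$ differ. Note that a quadruple is recovered from its normalized lifts $X_1,\dots,X_4$, and the six numbers $\Delta_{ij}$ are preserved by any $g\in\mathrm{SL}(2,\R)$ acting linearly on lifts. So the natural source of such pairs is a matrix $g\in\mathrm{SL}(2,\Q)\setminus\mathrm{SL}(2,\Z)$ sending four primitive integer vectors with positive second coordinate (or $X_\infty$) to vectors of the same kind.

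The first step is to construct such a $g$. Here is the bookkeeping. If $\Delta_{12}\neq0$, then $X_3$ and $X_4$ are determined by $X_1,X_2$ and the remaining $\Delta$'s via $X_k=(\Delta_{k2}X_1-\Delta_{k1}X_2)/\Delta_{12}$ (expand $X_k$ in the basis $X_1,X_2$ and take determinants). So it suffices to choose two pairs $(X_1,X_2)$ and $(Y_1,Y_2)$ of normalized primitive lifts with the same determinant $\Delta_{12}$, say $|\Delta_{12}|=5$ or $7$, which are not $\mathrm{SL}(2,\Z)$-equivalent. Then pick integers $\Delta_{13},\Delta_{23},\Delta_{14},\Delta_{24}$ so that the resulting $X_3,X_4$ and $Y_3,Y_4$ are both integral, primitive and normalized. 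I expect a small computer search over denominators up to about $10$ to produce such a pair. The identical $\Delta$'s give $\crr=\crr$ and, by Theorem~\ref{FirstCFThm}, also equal $\crr'$. This is a useful consistency check and shows that the obstruction only appears at second order.

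The second step is to compute $\crr''$ for both quadruples. I would write each $[x_i]_{e^h}=x_i+h\,x_i'+\tfrac{h^2}{2}x_i''+O(h^3)$. The coefficients $x_i',x_i''$ come either from Lasker's formulas~\cite{Las} or directly from the continued fraction recurrence for $\Rc,\Sc$, by differentiating the matrix product of $q$-deformed generators twice at $q=1$. Substituting into~\eqref{ClCR} and expanding to order $h^2$ then gives $\crr''$ for each quadruple as an explicit rational number.

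The main obstacle is finding the example, not verifying it. The conceptual reason it should exist is that $x_i''$ depends on the continued fraction expansion of $x_i$, hence on its $\mathrm{SL}(2,\Z)$-orbit data, and not only on lattice determinants. A non-integral $g$ scrambles continued fractions while preserving all $\Delta_{ij}$. If the first candidates accidentally give equal $\crr''$, I would vary $\Delta_{12}$ or replace one point by $\infty$, which simplifies $[\infty]_q$ and the expansion. Once two values differ, no function of the six integers can equal $\crr''$, which proves Proposition~\ref{PProp}.
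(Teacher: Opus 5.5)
Your strategy is the same as the paper's. You take two ordered quadruples whose normalized primitive lifts are related by some $g\in\mathrm{SL}(2,\Q)\setminus\mathrm{SL}(2,\Z)$, so that the six signed determinants coincide, hence $\crr$ and (by Theorem~\ref{FirstCFThm}) $\crr'$ coincide. Then a computation shows that $\crr''$ does not.

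\textbf{The gap.} For a statement of this kind the explicit witness \emph{is} the proof, and you never produce one. You predict that a search will succeed and give a heuristic for why it must. That heuristic is not enough, because your construction can yield pairs on which the entire function $\crr_q$ coincides. Your fallback of putting a point at $\infty$ is an example. From $[u+k]_q=q^k[u]_q+[k]_q$ one gets $[u]_q-[u+k]_q=(1-q^k)\bigl([u]_q-\frac{1}{1-q}\bigr)$. Hence $\crr_q(\infty,u;u+k,u+l)=\frac{1-q^l}{1-q^k}$ for every rational $u$ and distinct nonzero integers $k,l$, independently of $u$. This is exactly the paper's Example~\ref{ex:mixed-nonseparation}. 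The quadruples $(\infty,\frac13;\frac43,\frac{10}{3})$ and $(\infty,\frac23;\frac53,\frac{11}{3})$ are related by $g=\begin{pmatrix}1&1/3\\0&1\end{pmatrix}$, which carries normalized lifts to normalized lifts, so all six signed determinants agree. Yet both have $\crr_q=[3]_q$, so every $\crr^{(n)}$ agrees. Non-integrality of $g$ is therefore necessary but not sufficient, and a pair with distinct $\crr''$ must actually be exhibited and checked.

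\textbf{How the paper closes it.} It uses $\mathcal X=(\frac14,\frac34;\frac18,\frac38)$ and $\mathcal Y=(\frac34,\frac54;\frac58,\frac78)$, related by $g=\begin{pmatrix}1&1/2\\0&1\end{pmatrix}$. The denominators are $4$ or $8$ and the numerators are odd, so the images $(a+b/2,b)$ of the lifts stay primitive. The common signed determinants are $(-8,4,-4,20,12,-16)$. Using $[1/n]_q=q^{n-1}/[n]_q$ and~\eqref{RecEq} one finds
\[
\crr_q(\mathcal X)=-\frac{q[3]_q}{1+q+q^2+q^4+q^5},
\qquad
\crr_q(\mathcal Y)=-\frac{1+q^4+q^8}{q^2(1+q+q^2+q^4+q^5)}.
\]
Since $\crr$ and $\crr'$ already agree, the cheapest check is that the difference vanishes to order exactly two at $q=1$. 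It equals $-(q-1)^2$ times a function taking the value $3$ at $q=1$. Hence $\crr''(\mathcal Y)-\crr''(\mathcal X)=-6$; the individual values are $\frac{196}{125}$ and $-\frac{554}{125}$.

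Alternatively, a unipotent $g$ preserves every $b_i$ and every difference $x_i-x_j$. So in Theorem~\ref{SecondCFThm} only the Dedekind-sum terms of the $\Phi_{ij}$ change. Their alternating sum changes by $5=-6/(2\crr)$ with $\crr=-\frac35$.
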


Following~\cite{Las}, we will use Apostol's generalized Dedekind sum $s_3(a,b)$ of weight $4$ 
(see~\cite{Apostol} and Section~\ref{ApoSec} below). 
Put
$$
s_i:=s_3(a_i,b_i).
$$
For a pair $i\ne j$, set
\begin{equation}
\Phi_{ij}
:=
\frac{(x_i-x_j)^2}{24}
-
\frac18
\left(
\frac{b_i^{-2}-b_j^{-2}}{x_i-x_j}
\right)^2
-
10\frac{s_i-s_j}{x_i-x_j},
\label{eq:PhiCoord}
\end{equation}
(here we assume $x_i,x_j$ finite rational points).
The second coefficient in~\eqref{CrT} is determined by the following formula.

\begin{thm}
\label{SecondCFThm}
For a quadruple of pairwise distinct finite rational points $(x_1,x_2,x_3,x_4)$,
\begin{equation}
\label{CrT2}
\frac12\left[
\frac{\crr''(x_1,x_2;x_3,x_4)}{\crr(x_1,x_2;x_3,x_4)}
-\left(\frac{\crr'(x_1,x_2;x_3,x_4)}{\crr(x_1,x_2;x_3,x_4)}
\right)^2
\right]
=
\Phi_{13}+\Phi_{24}-\Phi_{14}-\Phi_{23}.
\end{equation}
\end{thm}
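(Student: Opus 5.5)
The plan is to take the logarithm of the cross-ratio and reduce everything to a second-order expansion of the single quantity $\log([x_i]_{e^h}-[x_j]_{e^h})$. Write $\crr_{e^h}=\crr\cdot\exp(L(h))$ with $L(0)=0$. Then $\crr'/\crr=L'(0)$ and $\crr''/\crr=L''(0)+L'(0)^2$. So the left-hand side of~\eqref{CrT2} is exactly $\tfrac12 L''(0)$, the $h^2$-coefficient of $\log(\crr_{e^h}/\crr)$. By the affine formula~\eqref{ClCR} applied to the $q$-rationals,
\[
\log\crr_{e^h}=\ell_{13}+\ell_{24}-\ell_{14}-\ell_{23},
\qquad \ell_{ij}(h):=\log\bigl([x_i]_{e^h}-[x_j]_{e^h}\bigr).
\]
It therefore suffices to prove that the $h^2$-coefficient of $\ell_{ij}$ equals $\Phi_{ij}$ up to a term of the form $\psi(x_i)+\psi(x_j)$. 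Such a term cancels in the alternating sum $13+24-14-23$, since each index occurs once with each sign. Any function of $x_i-x_j$ alone that is symmetric also survives, so the bookkeeping of which terms cancel must be done carefully. In particular the term $(x_i-x_j)^2/24$ is genuinely pairwise.

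\textbf{Step 1: second-order expansion of one $q$-rational.} Write
\[
[x]_{e^h}=x+\alpha(x)h+\beta(x)h^2+O(h^3).
\]
Here I invoke Lasker's explicit formulas~\cite{Las}, which the paper says it uses and verifies by recurrences. I expect $\alpha(x)=\tfrac12(x^2-1+b^{-2})$ up to normalization; this is consistent with the form of~\eqref{CrT1} and with the $b_i^{-2}$ terms in $\Phi_{ij}$. I expect $\beta(x)$ to be a polynomial in $x$ and $b^{-2}$ plus a multiple of the Dedekind sum $s_3(a,b)$, which produces the $-10(s_i-s_j)/(x_i-x_j)$ term. To confirm these formulas, I would check that both sides satisfy the same recurrences under the generators: $[x+1]_q=q[x]_q+1$ and $[-1/x]_q=-q^{-1}/[x]_q$. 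For the $s_3$ part this is Apostol's reciprocity for $s_3$ (Section~\ref{ApoSec}).

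\textbf{Step 2: expand the logarithm.} With $d=x_i-x_j$,
\[
\ell_{ij}=\log d+\frac{\alpha_i-\alpha_j}{d}h+\Bigl(\frac{\beta_i-\beta_j}{d}-\frac12\Bigl(\frac{\alpha_i-\alpha_j}{d}\Bigr)^2\Bigr)h^2+\cdots.
\]
Substituting $\alpha$ gives $\frac{\alpha_i-\alpha_j}{d}=\frac{x_i+x_j}{2}+\frac{b_i^{-2}-b_j^{-2}}{2d}$. Squaring this and taking $-\tfrac12$ yields three pieces:
\begin{itemize}
\item the term $-\tfrac18\bigl(\tfrac{b_i^{-2}-b_j^{-2}}{d}\bigr)^2$, which appears in $\Phi_{ij}$;
\item a cross term $-\tfrac14(x_i+x_j)(b_i^{-2}-b_j^{-2})/d$;
\item the term $-\tfrac18(x_i+x_j)^2$.
\end{itemize}
Next, write $(x_i+x_j)^2=2x_i^2+2x_j^2-d^2$. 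The $x_i^2$ and $x_j^2$ parts are of the form $\psi(x_i)+\psi(x_j)$ and drop out. The remaining $d^2$ part must combine with the polynomial part of $(\beta_i-\beta_j)/d$ to give $d^2/24$. For this, the cubic part of $\beta$ should be $\tfrac16 x^3$-like, so that $(x_i^3-x_j^3)/d$ contributes $x_i^2+x_ix_j+x_j^2=\tfrac32(x_i^2+x_j^2)-\tfrac12 d^2$ modulo cancelling terms. Similarly, the cross term must cancel against the $x b^{-2}$ part of $\beta$, up to cancelling terms. The $s_3$ term then gives $-10(s_i-s_j)/d$ directly.

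\textbf{Main obstacle.} The real work is Step 1, namely the exact form of $\beta(x)$. Once $\beta$ is known, Step 2 is routine algebra. For Step 1 I would first try to derive $\beta$ from Lasker's formula and cross-check it on small examples, such as $x=1/2$ and $x=2/3$. I would then fix constants, such as the coefficient $10$, by matching those examples, and finally verify the full formula via the two generator recurrences and $s_3$-reciprocity.
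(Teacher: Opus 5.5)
\textbf{Same route as the paper, but the key input is wrong or missing.} Your strategy matches the paper's proof. The left side of \eqref{CrT2} is the $h^2$-coefficient of $\log(\crr_{e^h}/\crr)$. This coefficient is the alternating sum, over the pairs $13,24,14,23$, of the $h^2$-coefficients $\frac{\beta_i-\beta_j}{x_i-x_j}-\frac12\bigl(\frac{\alpha_i-\alpha_j}{x_i-x_j}\bigr)^2$ of $\log\bigl(([x_i]_{e^h}-[x_j]_{e^h})/(x_i-x_j)\bigr)$, and single-index terms drop out. The difficulty is that the only nontrivial input, the pair $(\alpha,\beta)$, is wrong in the first coefficient and absent in the second, so your Step~2 establishes nothing as written.

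Your $\alpha(x)=\frac12(x^2-1+b^{-2})$ is incorrect. By \eqref{FDEq}, $\alpha(x)=D_1(x)=\frac12\bigl(x^2-x+1-b^{-2}\bigr)$. For example, $[2]_{e^h}=1+e^h$ gives $\alpha(2)=1$, while your formula gives $2$. This is not a harmless normalization. The $-x$ term and the sign of $b^{-2}$ turn the cross term of $-\frac12(\cdot)^2$ into $+\frac14(x_i+x_j-1)\frac{b_i^{-2}-b_j^{-2}}{x_i-x_j}$. It is exactly this term that cancels the contribution $\frac{(1-x_i-x_j)(b_i^{-2}-b_j^{-2})}{4(x_i-x_j)}$ coming from $\beta$. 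With your $\alpha$ the two terms do not cancel, and you would get a wrong answer.

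\textbf{$\beta$ is never determined.} You say its cubic part ``should be'' $\frac16x^3$ so that $(x_i-x_j)^2/24$ appears, that its $xb^{-2}$ part ``must'' kill the cross term, and that the coefficient $10$ is to be fixed by matching examples. That is reverse-engineering from the conclusion, not a proof.

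What is needed is $\beta=\frac12(D_1+D_2)$, which follows from $e^h-1=h+\frac12h^2+\cdots$. Combine this with Lasker's formula \eqref{eq:D2formula}, proved via the recurrences \eqref{eq:D2T}--\eqref{eq:D2S} and Apostol reciprocity, as you planned. Note that \eqref{eq:D2S} involves $D_1$ and $D_1^2$, so the correct $\alpha$ is needed there as well. The result is \eqref{eq:Bformula2}:
$\beta(x)=\frac{x^3}{6}-\frac{x^2}{4}+\frac{7x}{12}-\frac14+\frac{1-2x}{4b^2}-10s_3(a,b)$.

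With the correct $\alpha$ and $\beta$ substituted, the pair coefficient equals $\Phi_{ij}+\frac{11}{24}-\frac{b_i^{-2}+b_j^{-2}}{4}$ exactly, as in \eqref{PairSecondOrder}. Two cancellations make this work:
\begin{enumerate}
\item The $(x_i+x_j)^2$ and $(x_i+x_j)$ terms cancel outright, not merely modulo single-index terms.
\item The divided difference of $\frac{1-2x}{4b^2}$ splits as the required cross term plus $-\frac{b_i^{-2}+b_j^{-2}}{4}$.
\end{enumerate}
After that, your alternating-sum bookkeeping finishes the proof.

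A minor point: when $x_i<x_j$, take the formal logarithm of the normalized ratio, which has constant term $1$, rather than expanding around $\log(x_i-x_j)$.
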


Note that the expression on the left-hand side of~\eqref{CrT2}
is precisely the coefficient of~$h^2$ in $\log(\crr_{e^h}/\crr)$.

\subsection*{Comparison to the existing literature.}
We briefly compare this construction with earlier quantum and noncommutative cross-ratios.

Earlier objects called quantum cross-ratios~\cite{CHZ,Lei} arise in noncommutative quantum projective geometry, 
through $q$-deformed coordinate/bracket algebras. 
Our construction is of a different nature: 
it applies the scalar $q$-rational deformation to each classical rational point 
and then takes the ordinary cross-ratio.

Classical cross-ratios occur naturally as cluster $\mathcal{X}$-coordinates, particularly for 
$\PGL_2$ moduli spaces and Teichm\"uller spaces; see~\cite{FC,FG}. 
Quantum cluster and quantum Teichm\"uller theories 
quantize the bracket algebra of Teichm\"uller coordinates and their mutation rules. 
This is conceptually different from the present construction and does not give a scalar deformation 
of $\crr$.
  
A noncommutative analogue of the cross-ratio was considered in~\cite{Ret}.
This construction is a noncommutative cross-ratio over skew fields, 
expressed through quasi-Pl\"ucker coordinates, 
rather than a one-parameter $q$-deformation.

The classical cross-ratio was used in the framework of $q$-deformed rationals in~\cite{JPT}.
It would be interesting to determine whether the $q$-deformed version suggested in this paper
can be applied to the Springborn operations considered in~\cite{JPT}.

\subsection*{Organization.}
This paper is organized as follows.
In Section~\ref{CompSec}, we give a very brief introduction to the notion
of $q$-rational numbers.
We emphasize the property of $\PGL(2,\Z)$-equivariance.
In Section~\ref{ExSec}, we give several concrete examples of $q$-cross-ratios.
In Section~\ref{FrizSec}, we explain the relation to $q$-friezes.
In Section~\ref{GenSec}, we prove Theorems~\ref{TayPro} and~\ref{InvThm}
and discuss the separation and
completeness properties of the $q$-cross-ratio.
In Section~\ref{TDerSec}, we reprove the results of Lasker~\cite{Las}
necessary for our computations.
In Section~\ref{ComputSec}, we prove Theorems~\ref{FirstCFThm} and~\ref{SecondCFThm}.
We discuss open questions,
mixed cross-ratios, complex arguments, 
and possible directions of further research in Section~\ref{SecQuest}.
Finally, in Appendix~\ref{PairTripleApp} we recall the classification of
pairs from~\cite{JSW} and give a classification of triples of points
in $\pP^1(\Q)$ under the $\PGL(2,\Z)$-action.
We then deduce a normal-form classification of ordered quadruples.

\section{Background on $q$-rationals}\label{CompSec}

\subsection{Definition and $\PSL(2,\Z)$ equivariance}

The quantization map
\begin{equation}
\label{qMap}
[\,\cdot\,]_q:\pP^1(\Q)\to\pP^1(\Q(q))
\end{equation}
assigns to every finite rational a rational function of~$q$ which is a quotient of two polynomials in~$\Z[q]$
(while $[\infty]_q=\infty$).
This map is uniquely characterized by the image of one point,
namely
$$
[0]_q=0,
$$
and two recurrence relations
\begin{equation}
\label{RecEq}
[x+1]_q=q[x]_q+1,
\qquad\qquad
\left[-\frac1x\right]_q
=
-\frac1{q[x]_q}.
\end{equation}
For a complete proof of the existence and uniqueness, see~\cite{MGOSur}.

The recurrence relations~\eqref{RecEq} express the $\PSL(2,\Z)$-equivariance identity
for the map~\eqref{qMap}.
The action on $\pP^1(\Q)$ is the standard fractional-linear action.
The action on the space of functions in $q$ is generated by two operators:
\begin{equation}
\label{TSEq}
T_q(F(q))=qF(q)+1,
\qquad\qquad
S_q(F(q))=-\frac{1}{qF(q)}.
\end{equation}
Note that the operators $T_q$ and $S_q$ act on the projective line over $\mathbb Q(q)$ and satisfy the relations
$$
S_q^2=(T_qS_q)^3=\id
$$
and thus indeed generate an action of $\PSL(2,\Z)$.
The fractional-linear transformations $T:=T_1$ and $S:=S_1$
are nothing but the standard generators of the $\PSL(2,\Z)$-action on $\pP^1(\Q)\simeq\Q\cup\{\infty\}$.

\subsection{Explicit formulas}
We recall the explicit continued-fraction formulas from~\cite{MGO1}.
When $x=n>0$ is a nonnegative integer, the $q$-deformation is given by the standard formula
\begin{equation}
\label{qBn}
[n]_{q}=1+q+q^{2}+\cdots+q^{n-1},
\end{equation}
(and $[0]_q=0$)
that goes back to Euler and Gauss.
Note that if the integer $n<0$, then 
$$
[n]_{q}:=\textstyle\frac{1-q^n}{1-q}
$$
 is a Laurent polynomial.

Every rational number~$x$ can be written as a finite continued fraction
$$
x
\quad=\quad
c_0 - \cfrac{1}{c_1 
          - \cfrac{1}{\ddots - \cfrac{1}{c_\ell} } } ,
$$
where $c_j$ are integers such that $c_j\geq2$, except for $c_0$ which is an arbitrary integer.
This is called the negative (or Hirzebruch) continued fraction.
The standard notation for this continued fraction is $x=\NCF{c_0,c_1,\ldots,c_\ell}$.

The $q$-deformation of~$x$ is given explicitly by a simple uniform nonalternating formula
\begin{equation}
\label{qa}
[x]_q
=
[c_0]_q
-\cfrac{q^{c_0-1}}{
  [c_1]_q
  -\cfrac{q^{c_1-1}}{
    [c_2]_q
    -\cfrac{q^{c_2-1}}{
      \ddots
      -\cfrac{q^{c_{\ell-1}-1}}{[c_\ell]_q}
    }
  }
},
\end{equation}
where $[c_i]_q$ are as in~\eqref{qBn}.

If one uses instead the regular continued fraction $x=[a_0;a_1,\ldots,a_k]$, 
with $a_0\in\mathbb Z$ and $a_i\ge1$ for $i\ge1$,
$$
x
\quad=\quad
a_0 + \cfrac{1}{a_1 
          + \cfrac{1}{\ddots +\cfrac{1}{a_{k}} } },
          $$
the formula alternates between $q$ and $q^{-1}$:
\begin{equation}
\label{rqa}
[x]_q
=
[a_0]_q
+\cfrac{q^{a_0}}{
  [a_1]_{q^{-1}}
  +\cfrac{q^{-a_1}}{
    [a_2]_q
    +\cfrac{q^{a_2}}{
      \ddots
      +\cfrac{q^{(-1)^{k-1}a_{k-1}}}{
        [a_k]_{q^{(-1)^k}}
      }
    }
  }
}.
\end{equation}
in every other term.

\subsection{$\PGL(2,\Z)$-equivariance}

Although the quantization map~\eqref{qMap} is determined by the property of $\PSL(2,\Z)$-equivariance,
it also commutes with a certain action of $\PGL(2,\Z)$.

Recall that to define an action of $\PGL(2,\Z)$, we need one more generator,
besides $T$ and $S$.
The simplest choice is the inversion map
$$
\mathcal{I}(x)=\frac{1}{x}.
$$
The $q$-deformed action of $\mathcal{I}$ on $\pP^1(\Q(q))$ is then given by
\begin{equation}
\label{IEq}
\mathcal{I}_q(F(q)):=\frac{1}{F(q^{-1})}.
\end{equation}
The occurrence of the parameter inversion $q\mapsto q^{-1}$ is already visible in~\eqref{rqa}.

The operators $T_q,S_q$, and $\mathcal{I}_q$ generate an action of $\PGL(2,\Z)$ on $\pP^1(\Q(q))$.

\begin{rem}
Another expression for the generator $\mathcal{I}_q$, 
which does not invert the parameter~$q$, was found by Jouteur in~\cite{Jou}.
This $q$-preserving extension of the modular action 
relates the left and right deformations.
This is particularly important for $q$-irrationals
defined as Taylor series in $q$ (see~\cite{MGOexp}), for which 
inversion of $q$ is not well defined.
\end{rem}

We summarize the $\PGL(2,\Z)$-equivariance of $q$-rationals as the formula
\begin{equation}
\label{PGLEq}
\left[M(x)\right]_q=
M_q\bigl(\left[x\right]_q\bigr),
\end{equation}
for every element $M\in\PGL(2,\Z)$, where $M_q$ is the corresponding action on $\pP^1(\Q(q))$.
For more details, see~\cite{MGOSur}.

\subsection{Left $q$-rationals}\label{LeftSec}
The notion of left $q$-rationals was developed in~\cite{BBL}
(it was first observed in~\cite[Remark~3.2]{MGOexp}).
Left $q$-rationals are defined by the map
\begin{equation}
\label{qMapLeft}
[\,\cdot\,]^\flat_q:\pP^1(\Q)\to\pP^1(\Q(q)),
\qquad\qquad
[0]^\flat_q=\frac{q-1}{q},
\end{equation}
commuting with the $\PSL(2,\Z)$-action with the generators~\eqref{TSEq}, i.e.,
satisfying the recurrence relations~\eqref{RecEq}.

\begin{rem}
(a)
Note that a $\PSL(2,\Z)$-equivariant map from
$\pP^1(\Q)$ to $\pP^1(\Q(q))$ can have only two possible values at $0$:
$$
[0]^\flat_q=\frac{q-1}{q},
\qquad\hbox{or}\qquad
[0]_q=0.
$$
Indeed, these are precisely the two fixed points
of the fractional-linear transformation generated by the matrix
$$
L_q=
\begin{pmatrix}
q&0\\
q&1
\end{pmatrix}.
$$
More generally, both right and left $q$-rationals can be viewed (see~\cite{MGOSur})
as fixed points of the $q$-deformations of parabolic elements of $\PSL(2,\Z)$.

(b)
A useful example is
\[
 [\infty]^\flat_q=\frac{1}{1-q},
\]
which follows by applying $S_q$ to $[0]^\flat_q$, and is also the second fixed point
of~$T_q$ besides $\infty$. 
Note that, unlike the right deformation for which $[\infty]_q=\infty$, 
the left deformation of $\infty$ is finite over $\Q(q)$.
\end{rem}

For every $x\in\Q$, the left $q$-rational $[x]^\flat_q$ is,
similarly to the $q$-rational $[x]_q$, a rational function in~$q$ with integer coefficients.
The two notions of $q$-rationals are related by the fractional-linear transition map
\begin{equation}
\label{TransitionMap}
 g_q(z)=\frac{1+q(z-1)}{1+(q-1)z},
\end{equation}
found by Thomas~\cite{Tho},
which satisfies
\begin{equation}
\label{LeftRightTransition}
 g_q\bigl([x]_q\bigr)=[x]^{\flat}_{q^{-1}},
 \qquad\qquad
 g_q\bigl([x]^{\flat}_q\bigr)=[x]_{q^{-1}}.
\end{equation}

For real specializations of the parameter~$q$ such that $0<q<1$ and finite~$x$, one always has the inequality
$$
[x]^\flat_q<[x]_q,
$$
see~\cite{BBL,Eti}.

\section{Examples of $q$-cross-ratios}\label{ExSec}

To illustrate the definition, we compute several examples.

\subsection{Harmonic quadruples}

The elementary example of the classical harmonic
quadruple $(\infty,0;1,-1)$.
The defining recurrences of $q$-rationals give
$$
 [\infty]_q=\infty,\qquad [0]_q=0,\qquad [1]_q=1,
 \qquad [-1]_q=-q^{-1}.
$$
Therefore
\begin{equation}
\label{eq:harmonic-basic}
 \crr_q(\infty,0;1,-1)=-q^{-1}.
\end{equation}

\begin{rem}
Recall that 
the hyperbolic surface associated with the $q$-deformed modular action is an
orbifold with one funnel; see~\cite{JPT}.  
The above example is related to the boundary geodesic length.
Indeed,
Jouteur, Paris-Romaskevich, and Thomas proved that
for~$q>0$, the boundary geodesic of the convex core of the
$\PSL(2,\Z)$-quotient has length 
$$
\ell_q=|\log q|
=\left|\log\left|\crr_q(\infty,0;1,-1)\right|\right|
$$
(for the $\PGL(2,\Z)$-quotient
the length is divided by two).
At $q=1$ the boundary degenerates to the cusp of the classical modular surface
and the length formula has limit $0$; see~\cite[Proposition~3.18]{JPT}.
It would be interesting to explore this relation further.
\end{rem}

\subsection{The Fibonacci family}

Let $F_0=0$, $F_1=1$, and $F_{n+2}=F_{n+1}+F_n$ be the classical Fibonacci sequence.
The quotients
$$
 x_i=\frac{F_{2i}}{F_{2i+1}},\qquad i\geq 1
$$
have been studied in~\cite{MGO1} and~\cite{LMGOV}.
Thus
$$
 x_1=\frac12,\quad x_2=\frac35,\quad x_3=\frac8{13},\quad
 x_4=\frac{21}{34},\quad x_5=\frac{55}{89},\quad
 x_6=\frac{144}{233},\ldots
$$
The negative continued fraction of $x_i$ is
$$
 x_i=\NCF{1,\underbrace{3,\ldots,3}_{i-1\text{ times}},2}.
$$

This sequence of rationals is the orbit of a single element of $\PSL(2,\Z)$:
$$
 x_{i+1}=M(x_i),\qquad
 M(x)=\frac{x+1}{x+2},\qquad
 M=\begin{pmatrix}1&1\\1&2\end{pmatrix}.
$$
Consequently, the $\PSL(2,\Z)$-invariance of $\crr_q$ immediately
implies that
$$
 \crr_q(x_i,x_{i+1};x_{i+2},x_{i+3})=
 \frac{1+2q+3q^2+2q^3+q^4}
 {1+2q+2q^2+2q^3+q^4}
$$
is independent of~$i$.

More generally, define the polynomial sequence $(U_n)$ by $U_{-1}=0, U_0=1$ and
$$
U_n=
[3]_qU_{n-1}-q^2U_{n-2}
$$
for $n\geq1$.
The first polynomials $U_n$ are
\begin{align*}
 U_0={}&1,\\
 U_1={}&1+q+q^2,\\
 U_2={}&1+2q+2q^2+2q^3+q^4,\\
 U_3={}&1+3q+4q^2+5q^3+4q^4+3q^5+q^6,\\
 U_4={}&1+4q+7q^2+10q^3+11q^4+10q^5+7q^6+4q^7+q^8,\\
 U_5={}&1+5q+11q^2+18q^3+24q^4+26q^5
       +24q^6+18q^7+11q^8+5q^9+q^{10}.
\end{align*}

\begin{prop}
\label{prop:fib-four}
For $i<j<k<\ell$, let
$$
 \alpha=j-i,\qquad \beta=k-j,\qquad \gamma=\ell-k.
$$
Then
\begin{equation}\label{eq:fib-four}
 \crr_q(x_i,x_j;x_k,x_\ell)
 =
 \frac{U_{\alpha+\beta-1}U_{\beta+\gamma-1}}
 {U_{\alpha+\beta+\gamma-1}U_{\beta-1}}.
\end{equation}
\end{prop}

\begin{proof}
Put \(z_r=[x_r]_q\).  The classical orbit map is
\[
M(x)=\frac{x+1}{x+2}=TST^2(x).
\]
Consequently, its \(q\)-deformation is
\[
M_q(z)=T_qS_qT_q^2(z)
      =\frac{q^2z+q}{q^2z+q+1},
\]
represented by
\[
A_q=
\begin{pmatrix}
q^2&q\\
q^2&q+1
\end{pmatrix},
\qquad
\operatorname{tr}A_q=[3]_q,
\qquad
\det A_q=q^2.
\]
Since \(x_{r+1}=M(x_r)\), equivariance gives \(z_{r+1}=M_q(z_r)\).

Choose a nonzero vector \(V_1\in\mathbb Q(q)^2\) representing \(z_1\), and set
\[
V_r=A_q^{\,r-1}V_1
\qquad (r\geq1).
\]
The Cayley--Hamilton identity for \(A_q\), together with the recurrence defining \(U_n\), gives
\[
A_q^m=U_{m-1}A_q-q^2U_{m-2}I,
\qquad m\geq1.
\]
Indeed, this is immediate for \(m=1\), and multiplication by \(A_q\) proves the induction step.

For \(s>r\), we therefore obtain
\begin{align*}
\det(V_r,V_s)
&=\det(A_q)^{r-1}\det(V_1,A_q^{s-r}V_1)\\
&=q^{2(r-1)}U_{s-r-1}\det(V_1,A_qV_1),
\end{align*}
because the scalar term in \(A_q^{s-r}\) has zero determinant with \(V_1\).
Using the determinant formula for the cross-ratio, all common factors cancel:
\begin{align*}
\crr_q(x_i,x_j;x_k,x_\ell)
&=
\frac{\det(V_i,V_k)\det(V_j,V_\ell)}
     {\det(V_i,V_\ell)\det(V_j,V_k)}\\
&=
\frac{U_{k-i-1}U_{\ell-j-1}}
     {U_{\ell-i-1}U_{k-j-1}}.
\end{align*}
Finally,
\[
k-i=\alpha+\beta,\quad
\ell-j=\beta+\gamma,\quad
\ell-i=\alpha+\beta+\gamma,\quad
k-j=\beta,
\]
which is exactly~\eqref{eq:fib-four}. 
\end{proof}

Thus the answer depends only on the three gaps between the indices.
The coefficient vectors of the polynomials $U_n$ form the rows of OEIS sequence 
\href{https://oeis.org/A364366}{A364366}, the third row-symmetric
Fibonaccian triangle~\cite{OEIS364366}.

\subsection{A trace formula}

The Fibonacci computation is a special case of a useful general identity.

Let $A\in\PSL(2,\Z)$ and $x\in\pP^1(\Q)$,
and let $x,Ax,A^2x,A^3x$ be pairwise distinct.  

\begin{lem}
\label{TraceLem}
Let $A\in\PGL(2,\K)$ and $x\in\pP^1(\mathbb K)$, choose any invertible matrix representative, 
and assume the four orbit points are distinct.
Then
\begin{equation}
\label{eq:trace-formula}
 \crr(x,Ax;A^2x,A^3x)
 =\frac{(\operatorname{tr}A)^2}
 {(\operatorname{tr}A)^2-\det A}.
\end{equation}
\end{lem}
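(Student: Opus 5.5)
The plan is to mimic the proof of Proposition~\ref{prop:fib-four}, replacing the $q$-deformed matrix $A_q$ by an arbitrary invertible representative $A$. Write $t=\operatorname{tr}A$ and $d=\det A$. Choose a nonzero vector $V\in\K^2$ representing $x$, and set $V_r=A^rV$ for $r=0,1,2,3$. Then $V_r$ represents $A^rx$. By the Pl\"ucker formula~\eqref{ClassEq}, the cross-ratio is independent of the lifts, so
\[
\crr(x,Ax;A^2x,A^3x)=\frac{\det(V_0,V_2)\det(V_1,V_3)}{\det(V_0,V_3)\det(V_1,V_2)}.
\]
This expression is also unchanged if $A$ is rescaled by a nonzero scalar $\lambda$: each term changes in a compatible way, and the right-hand side of~\eqref{eq:trace-formula} is homogeneous of degree $0$ under $(t,d)\mapsto(\lambda t,\lambda^2 d)$. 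Hence the statement does not depend on the chosen representative.

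Next I use Cayley--Hamilton, $A^2=tA-dI$, which gives $A^3=(t^2-d)A-tdI$. Put $D=\det(V,AV)$. Since $x$ and $Ax$ are distinct, $D\neq0$. Because $\det(V,V)=0$, we get
\[
\det(V_0,V_2)=tD,\qquad \det(V_0,V_3)=(t^2-d)D.
\]
Moreover, $\det(V_r,V_s)=d^{\,r}\det(V,A^{s-r}V)$, so
\[
\det(V_1,V_2)=dD,\qquad \det(V_1,V_3)=d\,tD.
\]
Substituting these into the cross-ratio gives
\[
\frac{tD\cdot dtD}{(t^2-d)D\cdot dD}=\frac{t^2}{t^2-d},
\]
which is~\eqref{eq:trace-formula}.

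The only point requiring care is that the denominators are nonzero. This is the main, if mild, obstacle. We have $d\neq0$ because $A$ is invertible, and $D\neq0$ because $x\neq Ax$. The distinctness of $x$ and $A^3x$ gives $\det(V_0,V_3)\neq0$, hence $t^2-d\neq0$. Similarly, $x\neq A^2x$ forces $t\neq0$, so the formula is meaningful and nondegenerate. No further case analysis is needed: the argument is purely linear-algebraic, works over any field $\K$, and in particular applies with $\K=\Q(q)$ to deformed matrices such as $A_q$.
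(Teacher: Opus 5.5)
Your proposal is correct and follows essentially the same route as the paper: lift $x$ to $V$, use Cayley--Hamilton to express $A^2$ and $A^3$ in terms of $A$ and $I$, reduce all four determinants to multiples of $\det(V,AV)$, and cancel. Your explicit nondegeneracy check ($D\neq0$, $t\neq0$, $t^2-d\neq0$ from distinctness) and your remark on rescaling the representative are small additions. The paper states the rescaling remark after its proof and leaves the nondegeneracy check implicit.
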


\begin{proof}
Choose a nonzero lift \(V\in\mathbb K^2\) of \(x\).  By the
Cayley--Hamilton identity,
\[
A^2=(\operatorname{tr}A)A-(\det A)I,
\qquad
A^3=\bigl((\operatorname{tr}A)^2-\det A\bigr)A
     -(\operatorname{tr}A)(\det A)I.
\]
Hence
\[
\begin{aligned}
\crr(x,Ax;A^2x,A^3x)
&=
\frac{\det(V,A^2V)\det(AV,A^3V)}
     {\det(V,A^3V)\det(AV,A^2V)}
\\
&=
\frac{
(\operatorname{tr}A)\det(V,AV)\,
(\operatorname{tr}A)(\det A)\det(V,AV)}
{
\bigl((\operatorname{tr}A)^2-\det A\bigr)\det(V,AV)\,
(\det A)\det(V,AV)}
\\
&=
\frac{(\operatorname{tr}A)^2}
     {(\operatorname{tr}A)^2-\det A}.
\end{aligned}
\]
\end{proof}

Note that the ratio is unchanged under scalar rescaling, so that the above formula
does not depend on the choice of the matrix representing $A\in\PSL(2,\Z)$.
Applying this to $q$-deformed matrices $M_q$ gives a large supply of
examples of $q$-cross-ratios.

\begin{ex}
Consider the case of arithmetic progressions.
For a positive integer $d$ and the translation $T^d(x)=x+d$,
the $q$-deformed translation is given by the fractional-linear action of the matrix
$$
 T_q^d=
 \begin{pmatrix}
 q^d&[d]_q\\[4pt]
 0&1
 \end{pmatrix}.
$$
Therefore every four successive terms of an arithmetic progression with
finite~$x$ and common difference \(d\) satisfy
$$
 \crr_q(x+3d,x+2d;x+d,x)
 =\frac{(1+q^d)^2}{1+q^d+q^{2d}}.
$$
\end{ex}
Note that the result if independent of the starting point~$x$.

\begin{ex}
We also have a family with constant interior partial quotient $c$.
For an integer $c\geq2$, consider
$$
 M_c(x)=\frac{x+c-2}{x+c-1}
 =\begin{pmatrix}1&c-2\\1&c-1\end{pmatrix}\!\cdot x.
$$
Its $q$-deformed action is represented by
$$
 (M_c)_q=
 \begin{pmatrix}
 q^{c-1}&[c-1]_q-1\\[4pt]
 q^{c-1}&[c-1]_q
 \end{pmatrix},
 \qquad
 \operatorname{tr}(M_c)_q=[c]_q,
 \qquad
 \det(M_c)_q=q^{c-1}.
$$
Starting at $x_1=1/2$, its orbit is
$$
 x_i^{(c)}=
\NCF{1,\underbrace{c,\ldots,c}_{i-1\text{ times}},2}.
$$
Applying~\eqref{eq:trace-formula}, we obtain
$$
 \crr_q(x_i^{(c)},x_{i+1}^{(c)};x_{i+2}^{(c)},x_{i+3}^{(c)})
 =\frac{[c]_q^2}{[c]_q^2-q^{c-1}}.
$$
\end{ex}

\subsection{A gap formula for integer quadruples}

Let $x_1>x_2>x_3>x_4$ be integers and write
$$
 \alpha=x_1-x_2,\qquad
 \beta=x_2-x_3,\qquad
 \gamma=x_3-x_4.
$$
Using $[m+n]_q=[m]_q+q^m[n]_q$, all powers of $q$ cancel in the
cross-ratio and one obtains
$$
 \crr_q(x_1,x_2;x_3,x_4)
 =\frac{[\alpha+\beta]_q[\beta+\gamma]_q}
 {[\alpha+\beta+\gamma]_q[\beta]_q}.
 $$
Just as in the Fibonacci family, only the three gaps matter.  
Here are two examples.
$$
 \crr_q(3,2;1,0)=
 \frac{(1+q)^2}{1+q+q^2}
$$
and
$$
  \crr_q(4,3;1,0)=
  \frac{(1+q+q^2)^2}{[4]_q[2]_q}
 =\frac{1+2q+3q^2+2q^3+q^4}
 {1+2q+2q^2+2q^3+q^4}.
$$

\section{$q$-cross-ratios and $q$-friezes}\label{FrizSec}

In this section, we describe the relation between the $q$-deformed cross-ratio 
and $q$-analogues of Coxeter friezes introduced in~\cite{MGOqfrieze}.

\subsection{The Gauss \emph{pentagramma mirificum}}

The story goes back to Gauss~\cite{Gau}.
Gauss's \emph{pentagramma mirificum}
was one of Coxeter's motivations for introducing frieze patterns~\cite{Cox}.

One way to formulate Gauss's observation is the following.
Take five (cyclically ordered) points in~$\R\pP^1$:
$$
x_1,\ldots,x_5,
\qquad\qquad
x_{i+5}:=x_i
$$
and consider their five consecutive cross-ratios:
\begin{equation}
\label{GaCR}
\gr_i:=
\gr(x_i,x_{i+1};x_{i+2},x_{i+3})=
\frac{(x_i-x_{i+3})(x_{i+1}-x_{i+2})}{(x_i-x_{i+1})(x_{i+2}-x_{i+3})}.
\end{equation}
Note that the version of the cross-ratio \eqref{GaCR} is related to~\eqref{ClCR} via
$$
\gr(x_i,x_{i+1};x_{i+2},x_{i+3})=
\frac{1}{\crr(x_i,x_{i+1};x_{i+2},x_{i+3})-1}.
$$

Since the moduli space of ordered configurations of five pairwise distinct points of $\R\pP^1$, 
modulo~$\PGL(2,\R)$ 
$$
 \cM_{0,5}(\R)=\{(x_1,\ldots,x_5)\in(\R\pP^1)^5:
 x_i\ne x_j\text{ for }i\ne j\}/\PGL(2,\R)
$$
has dimension~$2$, the cross-ratios $\gr_i,\,i=1,\ldots,5$ cannot be algebraically independent.
Indeed, they satisfy the Gauss relations
\begin{equation}
\label{GaRel}
\gr_i \gr_{i+1} = \gr_{i+3} + 1,
\end{equation}
where the indices are taken modulo $5$.

Five cross-ratios related by Gauss's relations~\eqref{GaRel} can be organized into a 
five-periodic array of numbers
\begin{equation}
\label{GaFr}
\xymatrix @!0 @R=0.45cm @C=0.45cm
{
\cdots&&1&& 1&&1&&1&&1&&1&&1&&1
 \\
&\cdots&&\gr_1&&\gr_2&&\gr_3&&\gr_4&&\gr_5&&\gr_1&&\gr_2&&\cdots
 \\
&&\cdots&&\gr_4&&\gr_5&&\gr_1&&\gr_2&&\gr_3&&\gr_4&&\gr_5&&\cdots
\\
&\cdots&&1&&1&&1&&1&&1&&1&&1&&\cdots
}
\end{equation}
The frieze rule consists in the requirement that every ``elementary diamond''
in this array satisfies the unimodular rule
\begin{equation}
\label{UniRule}
\xymatrix @!0 @R=0.55cm @C=0.55cm
{
&b\ar@{-}[rd]\ar@{-}[ld]&\\
a&&d\\
&c\ar@{-}[ru]\ar@{-}[lu]&\\
}
\qquad\qquad
\xymatrix @!0 @R=0.55cm @C=0.55cm
{
\\
ad-bc=1.
}
\end{equation}

A Coxeter frieze of width $w$ is a $(w+3)$-periodic array with $w$ 
 nontrivial rows between two rows of~$1$'s, 
 satisfying the same unimodular rule.
Cross-ratios appear in the second nontrivial row of every Coxeter frieze; see~\cite{MGsurvey}.

\subsection{Coxeter's friezes}
A Coxeter frieze is an infinite array whose top and bottom rows consist of $1$'s 
$$
\xymatrix @!0 @R=0.75cm @C=0.75cm
{
\cdots&&1&& 1&&1&&1&&1&&\cdots&&1&&\cdots
 \\
&\cdots&&c_{i}\ar@{.}[rrrrrdddd]&&c_{i+1}&&c_{i+2}&&\cdots&&c_{j-1}&&c_j\ar@{.}[llllldddd]&&\cdots
 \\
&&\cdots&&\;\; c_{i,i+1}&&\;\; c_{i+1,i+2}&&\cdots&&c_{j-2,j-1}&&c_{j-1,j}&&\cdots
 \\
 \\
 \\
&&&&&&&&c_{i,j}\\
&&&&&&\cdots&&\cdots&&\cdots\\
&&\cdots& 1&&1&&1&&1&&\cdots&1&&\cdots
}
$$
which is $n$-periodic in each row, that is $c_{i+n,j+n}=c_{i,j}$.
Each elementary diamond satisfies the unimodular rule~\eqref{UniRule}.

\begin{itemize}
\item
The width of the frieze $w=n-3$ is the number of ``nontrivial'' rows,
i.e., rows between the rows of $1$'s.
\item
The first nontrivial row, $\ldots,c_i,c_{i+1},c_{i+2},\ldots$, is called the quiddity row.
Note also that often rows of $0$'s are added before and after the rows of $1$'s.
\item
A frieze is called ``tame'' if every adjacent $3\times3$ determinant (minor) vanishes.
Tameness is an important condition that we will always assume.
\end{itemize}

Although Coxeter was mostly interested in the case where all entries $c_{i,j}$ are positive integers
(as in the next example), friezes with entries in $\Q,\R,\C$, and other fields also make perfect sense
and have been extensively studied; see~\cite{CH,CHP,MGsurvey,SVRS}.

\begin{ex}
\label{FEx}
The following $7$-periodic frieze with $6$ rows is borrowed from~\cite{CoCo}.
$$
\xymatrix @!0 @R=0.45cm @C=0.45cm
{
\cdots&&1&& 1&&1&&1&&1&&1&&1&&1&&1
 \\
&&&1&&4&&2&&1&&3&&2&&2&&1&&\cdots
 \\
\cdots&&1&&3&&7&&1&&2&&5&&3&&1
\\
&1&&2&&5&&3&&1&&3&&7&&1&&\cdots
 \\
 \cdots&&1&&3&&2&&2&&1&&4&&2&&1
 \\
&1&& 1&&1&&1&&1&&1&&1&&1&&1&&\cdots
}
\qquad\qquad
\xymatrix @!0 @R=0.35cm @C=0.35cm
{
&&&1\ar@{-}[rrd]\ar@{-}[lld]
\\
&3\ar@{-}[ldd]\ar@{-}[rrrr]&&&& 2\ar@{-}[rdd]&\\
\\
2\ar@{-}[rdd]&&&&&& 4\ar@{-}[ldd]\ar@{-}[llllldd]\ar@{-}[llllll]\ar@{-}[llllluu]\\
\\
&2\ar@{-}[rrrr]&&&&1
}
$$
For positive integer friezes, Conway and Coxeter proved~\cite{CoCo} 
that the quiddity row records a triangulation of an $n$-gon: 
$c_i$ is the number of triangles incident to its $i$th vertex.
\end{ex}

\subsection{Triality}

The term ``triality'' refers to the correspondence between the
following three types of objects~\cite{SVRS}:
\begin{enumerate}
\item
tame Coxeter friezes over $\R$ of width $n-3$;
\item
labelled $n$-gons $(x_1,\ldots,x_n)$ in $\RP^1$ such that
$x_i\neq x_{i+1}$ for every $i$ (with indices taken modulo $n$),
considered modulo $\PGL(2,\R)$;
\item
linear difference equations (discrete Sturm--Liouville equations)
\begin{equation}
\label{SturmLiouvilleEq}
V_{i+1}=c_iV_i-V_{i-1}
\end{equation}
with real $n$-periodic coefficients $c_i$ and all solutions
$n$-antiperiodic: $V_{i+n}=-V_i$.
\end{enumerate}
For odd $n$, these three spaces are in bijection.
The condition in the second item requires only consecutive vertices
to be distinct; nonconsecutive vertices may coincide.

For the geometric discussion below, we restrict to positive real
friezes. The corresponding polygons have pairwise distinct vertices
appearing in cyclic order on $\RP^1$.
To construct the polygon associated with a frieze, take two
consecutive diagonals and form the following $n$ quotients:
$$
\left(
x_1,x_2,\ldots,x_n
\right)
=\left(
\frac{0}{1},\frac{1}{c_i},\frac{c_{i+1}}{c_{i,i+1}},\frac{c_{i+1,i+2}}{c_{i,i+2}},\ldots,\frac{1}{0}
\right).
$$
This determines $n$ points in $\RP^1$.
Note that, up to the $\PGL(2,\R)$-action, the resulting $n$-gon is independent of the choice
of the consecutive diagonals.
In Example~\ref{FEx}, we obtain
$$
\left(
x_1,x_2,\ldots,x_7
\right)
=\left(
\frac{0}{1},\frac{1}{4},\frac{2}{7},\frac{1}{3},\frac{1}{2},\frac{1}{1},\frac{1}{0}
\right),
$$
but we can also take the heptagon
$$
\left(
x_1,x_2,\ldots,x_7
\right)
=\left(
\frac{1}{1},\frac{4}{3},\frac{7}{5},\frac{3}{2},\frac{2}{1},\frac{1}{0},\frac{0}{1}
\right).
$$
Choosing another pair of consecutive diagonals produces a projectively equivalent heptagon.

Conversely, given an $n$-gon $(x_1,\ldots,x_n)$ with odd~$n$ and
cyclically ordered pairwise distinct vertices in $\RP^1$.
Choose the antiperiodic lift convention 
$$
X_{i+n}=-X_i
$$
such that $\Delta_{i,i+1}=1$ in the fixed orientation.
Note that these are real, determinant-normalized lifts, 
not the positive-denominator primitive lifts of~\eqref{LiftEq}.

We have the following statement whose proof can be found in~\cite{MGsurvey,SVRS}.

\begin{prop}
\label{CoxProp}
(i)
the entries of the corresponding frieze are calculated using the Pl\"ucker determinants
\begin{equation}
\label{ClassPluFr}
 c_{i,j}
 =\frac{\Delta_{i-2,j}}{\Delta_{i-2,i-1}}
\end{equation}
(under the normalization $\Delta_{i,i+1}=1$ for all $i$).

(ii)
The negative of an anharmonic cross-ratios~$\gr$ satisfy
\begin{equation}
\label{EntriesEq}
 \frac{c_{i+2,\ell}\,c_{j+2,k}}
 {c_{i+2,j}\,c_{k+2,\ell}}=
\gr(x_i,x_j;x_k,x_\ell),
\end{equation}
for all $i<j<k<\ell$.
\end{prop}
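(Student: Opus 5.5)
We identify the polygon with the frieze by showing that the normalized Pl\"ucker determinants already form a tame frieze with the right boundary rows. A tame frieze is determined by its quiddity row through the unimodular rule, so this identifies it with the frieze of the triality. Fix real lifts $X_i\in\R^2$ with $X_{i+n}=-X_i$ and $\Delta_{i,i+1}=1$ for all $i$. Then $\Delta_{i-2,i-1}=1$, so the claim in (i) is that
\[
c_{i,j}=\Delta_{i-2,j}.
\]
We therefore \emph{define} an array $d_{i,j}:=\Delta_{i-2,j}$ and verify that it is the frieze.

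\emph{Unimodular rule.} For an elementary diamond, the Pl\"ucker identity \eqref{PluckIdentq} (at $q=1$) for the four vectors $X_{i-2},X_{i-1},X_j,X_{j+1}$ gives
\[
d_{i,j}d_{i+1,j+1}-d_{i+1,j}d_{i,j+1}
=\Delta_{i-2,j}\Delta_{i-1,j+1}-\Delta_{i-1,j}\Delta_{i-2,j+1}
=\Delta_{i-2,i-1}\Delta_{j,j+1}=1 .
\]

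\emph{Boundary rows.}
\begin{itemize}
\item The top row of $1$'s is $d_{i,i-1}=\Delta_{i-2,i-1}=1$, with the adjacent row of zeros $d_{i,i-2}=\Delta_{i-2,i-2}=0$.
\item The bottom row is $d_{i,i+n-3}=\Delta_{i-2,i+n-3}=-\Delta_{i-2,i-3}=\Delta_{i-3,i-2}=1$, by antiperiodicity.
\item The next row is $d_{i,i+n-2}=-\Delta_{i-2,i-2}=0$.
\end{itemize}
So the array has width $n-3$.

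\emph{Periodicity, tameness and quiddity.}
\begin{itemize}
\item Periodicity $d_{i+n,j+n}=d_{i,j}$ follows because the two sign changes cancel.
\item Tameness is automatic: any $3\times3$ minor of the array is a minor of a product of a $3\times2$ matrix (rows $X_{i-2}^T$) and a $2\times3$ matrix (columns $JX_j$), hence vanishes.
\item The quiddity entries are $c_i=d_{i,i}=\Delta_{i-2,i}$. Expanding $X_i$ in the basis $X_{i-2},X_{i-1}$ and using $\Delta_{i-2,i-1}=\Delta_{i-1,i}=1$ gives $X_i=\Delta_{i-2,i}X_{i-1}-X_{i-2}$. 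This is exactly the Sturm--Liouville equation \eqref{SturmLiouvilleEq} with coefficients $c_{i-1}$, so the quiddity row matches the one in the triality.
\end{itemize}
Finally, reading off the quotients of two consecutive diagonals of $d$ recovers the points $(X_j)$ expressed in the basis $(X_{i-2},X_{i-1})$. This is the polygon $(x_j)$ up to the projective map sending that basis to the standard one, so the construction in the triality is inverted. Since $d$ is a tame frieze with these boundary rows and quiddity row, it coincides with the frieze of the polygon, which proves (i) in the general form \eqref{ClassPluFr}.

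For (ii), substitute (i) into the left-hand side of \eqref{EntriesEq}:
\[
\frac{c_{i+2,\ell}\,c_{j+2,k}}{c_{i+2,j}\,c_{k+2,\ell}}
=\frac{\Delta_{i\ell}\Delta_{jk}}{\Delta_{ij}\Delta_{k\ell}} .
\]
This expression is homogeneous of degree zero in each $X_m$, so it no longer depends on the normalization. On the other hand, by \eqref{ClassEq} and the Pl\"ucker identity $\Delta_{ik}\Delta_{j\ell}=\Delta_{ij}\Delta_{k\ell}+\Delta_{i\ell}\Delta_{jk}$,
\[
\crr-1=\frac{\Delta_{ik}\Delta_{j\ell}-\Delta_{i\ell}\Delta_{jk}}{\Delta_{i\ell}\Delta_{jk}}
=\frac{\Delta_{ij}\Delta_{k\ell}}{\Delta_{i\ell}\Delta_{jk}},
\]
so $\gr=1/(\crr-1)$ equals the displayed ratio.

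The main obstacle is bookkeeping rather than substance. One must check the index shift $i-2$ and the signs coming from $X_{i+n}=-X_i$, so that both boundary rows are $+1$ rather than $-1$; here odd $n$ and the cyclic ordering of positive polygons are used. One must also ensure that the polygon obtained from consecutive diagonals is projectively the original one. I would handle this by fixing $i$ and checking the two boundary diagonals explicitly before invoking periodicity.
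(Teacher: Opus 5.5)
Your proof is correct, but for part~(i) it takes a different route from the one the paper indicates.

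The paper omits the classical proof (referring to \cite{MGsurvey,SVRS}) and instead proves the $q$-analogue, Proposition~\ref{thm:pluecker-frieze}, by a recurrence argument. The entries along a diagonal satisfy $c_{i,j}=c_jc_{i,j-1}-c_{i,j-2}$ with $c_{i,i-2}=0$ and $c_{i,i-1}=1$. The determinants $\Delta_{i-2,j}$ satisfy the same three-term relation because the lifts obey $X_j=c_jX_{j-1}-X_{j-2}$. Uniqueness of solutions of a second-order recurrence then gives the formula, and the diamond rule is deduced afterwards from the Pl\"ucker relation as a consequence.

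You go the other way. You take $d_{i,j}=\Delta_{i-2,j}$ as a candidate and check every frieze axiom directly: the unimodular rule from the Pl\"ucker relation, both boundary rows and the width $n-3$ from antiperiodicity, and tameness from the rank-two factorization $d_{i,j}=X_{i-2}^{T}JX_j$. You then identify $d$ with the frieze of the polygon by reading the polygon back from two consecutive diagonals and invoking the triality bijection.

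The paper's argument is shorter, but it takes as input that the normalized lifts satisfy the recurrence whose coefficients are the quiddity of the corresponding frieze. This is automatic in the $q$-setting, where the $V_j$ are defined by that recurrence, but in the classical case it is the content of the triality. Your argument is longer but self-contained on the frieze side. It effectively constructs the polygon-to-frieze direction of the triality and explains why closure and tameness hold, at the price of using injectivity of the frieze-to-polygon map for the identification.

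Part~(ii) is the same substitution as in Corollary~\ref{cor:general-quadruple}.

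One harmless index slip: with your convention $c_i=\Delta_{i-2,i}$, the relation $X_i=\Delta_{i-2,i}X_{i-1}-X_{i-2}$ reads $X_i=c_iX_{i-1}-X_{i-2}$ (the diagonal recurrence), not an equation with coefficient $c_{i-1}$. Your polygon-recovery step makes the identification independently, so nothing depends on this.
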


In particular, the second nontrivial row consists of cross-ratios
\begin{equation}
\label{CREntriesEq}
c_{i,i+1}=
\gr(x_{i-2},x_{i-1};x_i,x_{i+1}).
\end{equation}
We omit the proof here since we will prove a similar statement in the $q$-deformed case.

Finally, the third ingredient of the triality can be explained as follows.
Entries of every diagonal of the frieze satisfy the equation~\eqref{SturmLiouvilleEq}
with coefficients taken in the quiddity row.
For instance, the entries along the $i$-th diagonal satisfy the Sturm–Liouville equantion
$$
c_{i,j}=c_jc_{i,j-1}-c_{i,j-2}.
$$
The property that all the solutions to this equation are antiperiodic is equivalent to the fact that
the frieze eventually ends with a row of $1$'s.

\subsection{$q$-deformed integer friezes}

The notion of $q$-deformed integer Coxeter frieze patterns was introduced in~\cite{MGOqfrieze}.

A $q$-deformed integer frieze is an array of $n-1$ infinite rows of polynomials, each $n$-periodic,
starting with a row of~$1$'s.
Let $(c_i)$ be the quiddity of an integral Conway--Coxeter frieze, 
equivalently the sequence of numbers of incident triangles of a triangulated $n$-gon.
The quiddity row of a $q$-frieze consists of Euler's $q$-integers $[c_i]_q$, as in~\eqref{qBn}, and
every elementary diamond satisfies the $q$-deformed unimodular rule
\begin{equation}
\label{QL}
C_{i,j-1}(q)\,C_{i+1,j}(q)-C_{i+1,j-1}(q)\,C_{ij}(q)
=
q^{\sum_{k=i}^{j-1}(c_k-1)}.
\end{equation}

A $q$-frieze has the following shape
$$
\xymatrix @!0 @R=0.5cm @C=0.65cm
{
\cdots&&1&& 1&&1&&1&&1&&\cdots&&1&&\cdots
 \\
&\cdots&&{\left[c_{i}\right]_q}\ar@{.}[rrrrrddd]&&{\left[c_{i+1}\right]_q}
&&{\left[c_{i+2}\right]_q}&&\cdots&&{\left[c_{j-1}\right]_q}&&\left[c_j\right]_q\ar@{.}[lllllddd]&&\cdots
\\
 \\
 \\
&&&&&&&&C_{ij}(q)\\
&&&&&&\cdots&&\cdots&&\cdots
}
$$

Starting from the row $(\left[c_{i}\right]_q)$ and using~\eqref{QL},
one can calculate each subsequent row of the $q$-frieze inductively.
Example~\ref{FEx} corresponds to the following $q$-deformed frieze
which was calculated in~\cite{MGOqfrieze}. 

\begin{ex}
\label{qFEx} 
The $q$-deformation of the frieze in Example~\ref{FEx} is
$$
\xymatrix @!0 @R=0.7cm @C=0.7cm
{
\cdots&&1&& 1&&1&&1&&1&&1&&1&&1&&1
 \\
&&&1&&[4]_q&&[2]_q&&1&&[3]_q&&[2]_q&&[2]_q&&1&&\cdots
 \\
\cdots&&1&&q[3]_q&&\{7\}_{q}&&1&&q[2]_q&&\{5\}_{q}&&[3]_{q}&&1
\\
&1&&q^2[2]_q&&q\{5\}_{q}&&[3]_q&&q^2&&q[3]_q&&\{7\}_{q}&&1&&\cdots
 \\
 \cdots&&q^3&&q^2[3]_q&&q[2]_q&&q^2[2]_q&&q^3&&q[4]_q&&[2]_q&&q^3
 \\
&q^3&&q^4&& q^2&&q^3&&q^3&&q^4&&q&&q^3&&q^4&&\cdots
}
$$
where 
$$
\{7\}_{q}=1+2q+2q^2+q^3+q^4
\qquad\hbox{and}\qquad
\{5\}_{q}=1+2q+q^2+q^3.
$$
\end{ex}

A $q$-frieze ends with a row of powers of $q$.

\subsection{The projective $q$-polygon hidden in a $q$-frieze}

At $q=1$, the $q$-frieze specializes to an integral
Conway--Coxeter frieze. The associated rational $n$-gon $(x_1,x_2,\ldots,x_n)$ in $\pP^1(\Q)$ has
cyclically ordered, pairwise distinct vertices, but is initially
determined only up to projective equivalence. We now fix a
representative using the quiddity and an initial unimodular pair.
This integral normalization matters: the equivariance of
$x\mapsto[x]_q$ is with respect to the modular group, not the full
group $\PGL(2,\Q)$.

Choose an ear of the triangulation and index the quiddity so that
$c_0=1$. Fix the initial unimodular pair
\[
V_{-1}=\begin{pmatrix}1\\0\end{pmatrix},
\qquad
V_0=\begin{pmatrix}1\\1\end{pmatrix},
\]
and define $V_j=\begin{pmatrix}
\Rc_j(q)\\ \Sc_j(q)
\end{pmatrix}$ by
$$
V_j=[c_j]_qV_{j-1}-q^{c_{j-1}-1}V_{j-2},
$$
where $[c_j]_q$ are the entries of the $n$-periodic quiddity row of the $q$-frieze.
Set 
\[
x_j=\NCF{1,c_1,\ldots,c_j}=\Rc_j(1)/\Sc_j(1).
\]
The recurrence identifies $\Rc_j(q)/\Sc_j(q)$ with the canonical $q$-rational $[x_j]_q$:
\[
[x_j]_q=\frac{\Rc_j(q)}{\Sc_j(q)}.
\]
For these vertices, define the $q$-deformed Pl\"ucker determinants by
\[
\Delta^q_{i,j}=\det\bigl(V_i(q),V_j(q)\bigr).
\]
Here the vectors are those given by the recurrence, without
individual rescaling.

We have the following $q$-analogue of Proposition~\ref{CoxProp}.

\begin{prop}
\label{thm:pluecker-frieze}
For all indices in the frieze,
\begin{equation}
\label{eq:pluecker-entry}
 C_{i,j}(q)
 =\frac{\Delta^q_{i-2,j}}{\Delta^q_{i-2,i-1}},
\end{equation}
where $\Delta^q_{i,j}=\det(V_i(q),V_j(q))$; at $q=1$ the vectors specialize to primitive integer lifts of the rational vertices.
\end{prop}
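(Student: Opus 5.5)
We prove \eqref{eq:pluecker-entry} by showing that the right-hand side satisfies the same initial data and the same recurrence as the $q$-frieze entries. Put
\[
D_{i,j}:=\frac{\Delta^q_{i-2,j}}{\Delta^q_{i-2,i-1}}.
\]
A $q$-frieze is determined row by row from its quiddity row by the rule~\eqref{QL}, since all entries are nonzero rational functions. It therefore suffices to check three things: $D_{i,i-1}=1$, $D_{i,i}=[c_i]_q$, and the diamond rule~\eqref{QL} for the array $(D_{i,j})$.

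\textbf{Step 1: the consecutive determinants.} From the recurrence $V_j=[c_j]_qV_{j-1}-q^{c_{j-1}-1}V_{j-2}$ we get
\[
\Delta^q_{j-1,j}=\det\bigl(V_{j-1},\,-q^{c_{j-1}-1}V_{j-2}\bigr)=q^{c_{j-1}-1}\Delta^q_{j-2,j-1}.
\]
With $\Delta^q_{-1,0}=1$ this gives $\Delta^q_{j-1,j}=q^{\sum_{k<j}(c_k-1)}$, a monomial (the convention $c_{-1}$ is fixed by the ear $c_0=1$). Consequently:
\begin{itemize}
\item $D_{i,i-1}=1$ holds trivially.
\item $D_{i,i}=\det(V_{i-2},V_i)/\Delta^q_{i-2,i-1}=[c_i]_q$, because $\det(V_{i-2},V_i)=[c_i]_q\Delta^q_{i-2,i-1}$ (the $V_{i-2}$ term drops out).
\end{itemize}

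\textbf{Step 2: the diamond rule.} We need
\[
D_{i,j-1}D_{i+1,j}-D_{i+1,j-1}D_{i,j}=q^{\sum_{k=i}^{j-1}(c_k-1)}.
\]
Multiplying through by $\Delta^q_{i-2,i-1}\Delta^q_{i-1,i}$, the left side becomes
\[
\Delta^q_{i-2,j-1}\Delta^q_{i-1,j}-\Delta^q_{i-1,j-1}\Delta^q_{i-2,j}.
\]
By the three-term Plücker identity~\eqref{PluckIdentq} applied to $V_{i-2},V_{i-1},V_{j-1},V_j$, this equals $\Delta^q_{i-2,i-1}\Delta^q_{j-1,j}$. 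The left side is therefore
\[
\frac{\Delta^q_{j-1,j}}{\Delta^q_{i-1,i}}=q^{\sum_{k=i}^{j-1}(c_k-1)}
\]
by Step 1. This is exactly~\eqref{QL}. Induction on $j-i$ then identifies $D_{i,j}$ with $C_{i,j}(q)$ for all $i\le j$ in the range of the frieze.

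\textbf{Step 3: specialization and range.} At $q=1$, the recurrence becomes $V_j=c_jV_{j-1}-V_{j-2}$, started from a unimodular pair. All $V_j(1)$ are therefore integer vectors with consecutive determinants equal to $1$, hence primitive. Their denominators are positive because the vertices $x_j=\NCF{1,c_1,\ldots,c_j}$ form a cyclically ordered sequence starting at $V_0=(1,1)$ and increasing up to $\infty$, where the denominator becomes $0$.

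\textbf{Main obstacle.} The delicate point is the bookkeeping of the $q$-exponents across the period. The lifts are not $n$-periodic: after one period $V_{j+n}$ equals $V_j$ up to a sign and a power of $q$. The formula must still hold for all indices, and the frieze's periodicity $C_{i+n,j+n}=C_{i,j}$ must be compatible with it. We would handle this by noting that $D_{i,j}$ is invariant under rescaling the whole sequence of lifts by a common scalar, and that the recurrence together with periodicity of the quiddity forces $V_{j+n}=\lambda V_j$ for a single scalar $\lambda$. This uses the fact that the $q$-monodromy matrix is scalar (the $q$-frieze closes with a row of powers of $q$, as recalled from~\cite{MGOqfrieze}). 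Invariance under the common rescaling then shows that $D_{i,j}$ is $n$-periodic, so the identification holds on the entire frieze.
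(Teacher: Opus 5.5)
Your proof is correct, but it takes a different route from the paper's.

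\textbf{The paper's route.} It imports from~\cite{MGOqfrieze} the linear recurrence~\eqref{eq:right-recurrence} along each diagonal, $C_{i,j}=[c_j]_qC_{i,j-1}-q^{c_{j-1}-1}C_{i,j-2}$, with boundary values $C_{i,i-2}=0$, $C_{i,i-1}=1$, $C_{i,i}=[c_i]_q$. It then notes that $\Delta^q_{i-2,j}/\Delta^q_{i-2,i-1}$ satisfies the same recurrence, simply because $\det(V_{i-2},\cdot)$ is linear. Uniqueness for a second-order linear recurrence finishes the proof.

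\textbf{Your route.} You use only the defining data of a $q$-frieze, namely the quiddity row and the diamond rule~\eqref{QL}. You verify~\eqref{QL} for the candidate array via the three-term Pl\"ucker relation and the consecutive-determinant relation~\eqref{ConsPD}. This is exactly the computation the paper performs \emph{after} the proposition to derive~\eqref{qDiam} and~\eqref{QL}; you have promoted it to the proof itself.

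\textbf{What each buys.} Your version is self-contained relative to the paper's definition, since it does not need the diagonal recurrence from~\cite{MGOqfrieze}. It also establishes the monomial right-hand side of the diamond rule along the way. The price is that row-by-row uniqueness divides by $C_{i+1,j-1}$, so you genuinely need the entries to be nonzero. That holds, since they specialize at $q=1$ to positive Conway--Coxeter entries. Your argument also identifies the arrays only inside the frieze range, whereas the paper's division-free argument applies along the whole diagonal.

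\textbf{Minor remarks.}
Your ``main obstacle'' is not really one. The coefficient $q^{c_{j-1}-1}$ is a unit, so the recurrence can be run backwards to define $V_k$ for all $k\in\Z$. Steps~1--2 then hold verbatim for every index, and the induction on $j-i$ identifies $D_{i,j}$ with $C_{i,j}$ for all $i$ at once. The $n$-periodicity of $D$ becomes a consequence rather than an input, so the scalar-monodromy fact is not needed.

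In Step~3, the vertices move downward from $x_0=1$, not upward toward $\infty$; in the paper's example they are $1,\frac34,\frac57,\frac23,\frac12,0$. The appeal to cyclic order is also vague. A cleaner justification of positive denominators uses the proposition itself: $\Sc_j=\det(V_{-1},V_j)=\Delta^q_{-1,j}=C_{1,j}(q)$, whose value at $q=1$ is a positive Conway--Coxeter entry for $0\le j\le n-2$.
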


\begin{proof}
The entries of the $q$-frieze 
satisfy the recurrence
\begin{equation}
\label{eq:right-recurrence}
 C_{i,j}
 =[c_j]_qC_{i,j-1}
  -q^{c_{j-1}-1}C_{i,j-2},
 \qquad j\geq i,
\end{equation}
see~\cite{MGOqfrieze}.  
The boundary values are
$$
C_{i,i-2}(q)=0,
 \qquad
 C_{i,i-1}(q)=1,
 \qquad
 C_{i,i}(q)=[c_i]_q.
$$
It is now easy to check using
$$
\det(V_{i-2},V_j)
=[c_j]_q\det(V_{i-2},V_{j-1})
-q^{c_{j-1}-1}\det(V_{i-2},V_{j-2}),
$$
that the right-hand side of~\eqref{eq:pluecker-entry}
also satisfies~\eqref{eq:right-recurrence}, as well as the three boundary values above.
\end{proof}

Proposition~\ref{thm:pluecker-frieze} implies that
the $q$-diamond rule reads
\begin{equation}
\label{qDiam}
 C_{i,j-1}C_{i+1,j}-C_{i+1,j-1}C_{i,j}
 =
 \frac{\Delta^q_{j-1,j}}{\Delta^q_{i-1,i}}.
\end{equation}
Indeed, to deduce this formula from~\eqref{eq:pluecker-entry},
one just applies the ordinary Pl\"ucker relation
$$
 \Delta^q_{i-2,j-1} \Delta^q_{i-1,j}
 - \Delta^q_{i-1,j-1} \Delta^q_{i-2,j}
 = \Delta^q_{i-2,i-1} \Delta^q_{j-1,j}
$$
for the polynomial vectors $V_i$ defined above.
Furthermore, the recurrence~\eqref{eq:right-recurrence} implies that the consecutive
Pl\"ucker determinants satisfy
\begin{equation}
\label{ConsPD}
 \Delta^q_{j,j+1}=
 q^{c_j-1} \Delta^q_{j-1,j}.
\end{equation}
Substituting this equation into~\eqref{qDiam} gives the diamond rule~\eqref{QL}.

\subsection{Relation to $q$-cross-ratio}
We are ready to formulate the main result of this section
which is a more precise formulation of Theorem~\ref{CoxThmIntro}.
Recall that
$$
\gr_q(x_1,x_2;x_3,x_4)
:=
\frac{1}{\crr_q(x_1,x_2;x_3,x_4)-1}
=
\frac{([x_1]_q-[x_4]_q)([x_2]_q-[x_3]_q)}
     {([x_1]_q-[x_2]_q)([x_3]_q-[x_4]_q)}.
$$

\begin{thm}
\label{thm:second-row}
For the $q$-frieze and associated cyclically ordered $n$-gon constructed above, 
with $n\geq5$, one has
\begin{equation}
\label{eq:main-chi}
 C_{i,i+1}(q)=q^{c_i-1}
 \gr_q(x_{i-2},x_{i-1};x_i,x_{i+1}).
\end{equation}
\end{thm}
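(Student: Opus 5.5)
Take $i=j-1$ and shift the index in Proposition~\ref{thm:pluecker-frieze}. This gives
\[
C_{i,i+1}(q)=\frac{\Delta^q_{i-2,i+1}}{\Delta^q_{i-2,i-1}}.
\]
The plan is to rewrite the right-hand side of \eqref{eq:main-chi} as a ratio of the same $q$-Plücker determinants $\Delta^q_{k,l}=\det(V_k,V_l)$ and then compare the two expressions.

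First, since $[x_k]_q=\Rc_k/\Sc_k$, we have
\[
[x_k]_q-[x_l]_q=\frac{\Delta^q_{k,l}}{\Sc_k\Sc_l}.
\]
In the formula for $\gr_q(x_{i-2},x_{i-1};x_i,x_{i+1})$, each index occurs exactly once in the numerator and once in the denominator. Hence all factors $\Sc_k$ cancel, and
\[
\gr_q(x_{i-2},x_{i-1};x_i,x_{i+1})=\frac{\Delta^q_{i-2,i+1}\,\Delta^q_{i-1,i}}{\Delta^q_{i-2,i-1}\,\Delta^q_{i,i+1}}.
\]
This is the classical projective-invariance identity \eqref{ClassEq} applied to the (unnormalized) vectors $V_k$. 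It is independent of rescaling, so it does not matter that the $V_k$ are not primitive.

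Next, apply \eqref{ConsPD} with $j=i$:
\[
\frac{\Delta^q_{i-1,i}}{\Delta^q_{i,i+1}}=q^{-(c_i-1)}.
\]
Substituting this gives
\[
q^{c_i-1}\gr_q(x_{i-2},x_{i-1};x_i,x_{i+1})=\frac{\Delta^q_{i-2,i+1}}{\Delta^q_{i-2,i-1}}=C_{i,i+1}(q),
\]
which is the claim. Relation \eqref{ConsPD} itself follows from the recurrence $V_{j+1}=[c_{j+1}]_qV_j-q^{c_j-1}V_{j-1}$ by taking $\det(V_j,\cdot)$.

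The main obstacle is bookkeeping rather than algebra, and it comes in three parts.

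1. Indices. We must make sure the indexing of the vertices $x_j=\NCF{1,c_1,\ldots,c_j}$ matches the indexing of the frieze entries $C_{i,j}$. In particular, the exponent must be $c_i$, not $c_{i\pm1}$. This needs to be checked against the convention in \eqref{ConsPD}, namely $\Delta^q_{j,j+1}=q^{c_j-1}\Delta^q_{j-1,j}$.

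2. Nondegeneracy. The denominators must be nonzero: the differences $[x_k]_q-[x_l]_q$ for $k\ne l$ must not vanish in $\Q(q)$. This holds because at $q=1$ they specialize to the nonzero differences of distinct vertices. The hypothesis $n\ge5$ guarantees that the four points $x_{i-2},\dots,x_{i+1}$ are pairwise distinct vertices of the polygon.

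3. Periodicity. The recurrence is defined for all $j$, but the vertices with index at least $n$ wrap around. So we need $V_{j+n}$ to be proportional to $V_j$ (up to a monomial and sign), so that $[x_{j+n}]_q=[x_j]_q$ and the identity holds for all $i$ modulo $n$. Because the claimed identity is a ratio with each index appearing homogeneously, such proportionality constants cancel. The proportionality itself follows from the frieze terminating in a row of powers of $q$. Alternatively, it suffices to prove the identity for $i$ in a single window of length $n$, and then use the $n$-periodicity of both sides.
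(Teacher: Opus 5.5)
Your proof is correct and is essentially the paper's own argument: you write $\gr_q(x_{i-2},x_{i-1};x_i,x_{i+1})$ as $\Delta^q_{i-2,i+1}\Delta^q_{i-1,i}/(\Delta^q_{i-2,i-1}\Delta^q_{i,i+1})$, identify $\Delta^q_{i-2,i+1}/\Delta^q_{i-2,i-1}$ with $C_{i,i+1}(q)$ via Proposition~\ref{thm:pluecker-frieze}, and use \eqref{ConsPD} to get the factor $q^{1-c_i}$. Your bookkeeping remarks on nondegeneracy, periodicity, and the vertex at $\infty$ (handled by the determinant form \eqref{ClassEq} rather than by cancelling the $\Sc_k$) are harmless additions that the paper leaves implicit.
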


\begin{proof}
By Proposition~\ref{thm:pluecker-frieze},
\begin{align*}
 \gr_q(x_{i-2},x_{i-1};x_i,x_{i+1})
 &=\frac{\Delta^q_{i-2,i+1}\Delta^q_{i-1,i}}
 {\Delta^q_{i-2,i-1}\Delta^q_{i,i+1}}\\
 &=C_{i,i+1}(q)\frac{\Delta^q_{i-1,i}}{\Delta^q_{i,i+1}}
 =q^{1-c_i}C_{i,i+1}(q),
\end{align*}
where the last equality is~\eqref{ConsPD}.
\end{proof}

The same argument describes not only four consecutive points but every
quadruple of vertices of the projective $n$-gon.

\begin{cor}
\label{cor:general-quadruple}
Let $i<j<k<\ell<i+n$, and use the boundary convention $C_{i,i-1}=1$.
Then one has
\begin{equation}
\label{eq:general-chi}
 \gr_q(x_i,x_j;x_k,x_\ell)
 =q^{-\sum_{s=j+1}^{k}(c_s-1)}
 \frac{C_{i+2,\ell}\,C_{j+2,k}}
 {C_{i+2,j}\,C_{k+2,\ell}}.
\end{equation}
\end{cor}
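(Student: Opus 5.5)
The plan is to repeat the argument of Theorem~\ref{thm:second-row}, now for an arbitrary quadruple of vertices. We write $\gr_q$ through the $q$-Pl\"ucker determinants of the vectors $V_j(q)$, replace three of the four determinants by frieze entries using Proposition~\ref{thm:pluecker-frieze}, and account for the leftover consecutive determinants using~\eqref{ConsPD}.

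First, since $[x_j]_q=\Rc_j/\Sc_j$ and the cross-ratio does not depend on the choice of lifts, we have
\[
\gr_q(x_i,x_j;x_k,x_\ell)=\frac{\Delta^q_{i\ell}\,\Delta^q_{jk}}{\Delta^q_{ij}\,\Delta^q_{k\ell}}.
\]
The indices run over $i<\ell<i+n$, so they stay inside one period. If needed, we extend the $V$'s beyond $n$ by the recurrence, which is (anti)periodic up to scalars. Such scalars cancel in the cross-ratio because each index appears once in the numerator and once in the denominator.

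Next, we apply~\eqref{eq:pluecker-entry} with shifted first index, $\Delta^q_{a,b}=C_{a+2,b}\,\Delta^q_{a,a+1}$, to each of the four factors:
\[
\Delta^q_{i\ell}=C_{i+2,\ell}\Delta^q_{i,i+1},\quad
\Delta^q_{jk}=C_{j+2,k}\Delta^q_{j,j+1},\quad
\Delta^q_{ij}=C_{i+2,j}\Delta^q_{i,i+1},\quad
\Delta^q_{k\ell}=C_{k+2,\ell}\Delta^q_{k,k+1}.
\]
The factors $\Delta^q_{i,i+1}$ cancel, and what remains is the ratio $\Delta^q_{j,j+1}/\Delta^q_{k,k+1}$. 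Iterating~\eqref{ConsPD} from $j$ up to $k$ gives $\Delta^q_{k,k+1}=q^{\sum_{s=j+1}^{k}(c_s-1)}\Delta^q_{j,j+1}$, which produces exactly the monomial prefactor in~\eqref{eq:general-chi}.

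The boundary convention $C_{a,a-1}=1$ covers the adjacent cases $j=i+1$, $k=j+1$, $\ell=k+1$, since~\eqref{eq:pluecker-entry} at $j=i-1$ gives $1$. As a consistency check, for consecutive indices the formula reduces to Theorem~\ref{thm:second-row}.

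The main obstacle is not algebraic. It is making sure that Proposition~\ref{thm:pluecker-frieze} and~\eqref{ConsPD} hold across the whole range $i<\ell<i+n$, possibly wrapping past the initial normalization at $V_{-1},V_0$. We need to confirm that the recurrence-defined $V_j$ and the periodic frieze entries $C_{a,b}$ remain compatible after a full period; for instance, $V_{j+n}$ should be proportional to $V_j$ by a power of $q$ with a sign. If this fails, we fall back on restricting the indices to one fundamental window $-1\le i<\ell\le n-2$ and using periodicity of the frieze, together with the $\PSL(2,\Z)$-invariance of $\crr_q$ (Theorem~\ref{TayPro}), to reduce to that case.
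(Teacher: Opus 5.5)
Your proposal is correct and is essentially the paper's proof: write $\gr_q(x_i,x_j;x_k,x_\ell)=\Delta^q_{i\ell}\Delta^q_{jk}/(\Delta^q_{ij}\Delta^q_{k\ell})$, factor each determinant via \eqref{eq:pluecker-entry}, cancel $\Delta^q_{i,i+1}$, and evaluate the leftover ratio $\Delta^q_{j,j+1}/\Delta^q_{k,k+1}$ by iterating \eqref{ConsPD}. The wrapping concern is harmless: Proposition~\ref{thm:pluecker-frieze} and \eqref{ConsPD} use only the recurrence, which holds for all indices, and $V_{j+n}$ is a scalar multiple of $V_j$, so your fallback is not needed.
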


\begin{proof}
Formula~\eqref{eq:pluecker-entry} gives
$$
 \Delta^q_{i,\ell}=C_{i+2,\ell}\Delta^q_{i,i+1},
 \quad
 \Delta^q_{j,k}=C_{j+2,k}\Delta^q_{j,j+1},
 \quad
 \Delta^q_{i,j}=C_{i+2,j}\Delta^q_{i,i+1},
 \quad
 \Delta^q_{k,\ell}=C_{k+2,\ell}\Delta^q_{k,k+1}.
$$
Substitution into~\eqref{GaCR} leaves the factor
$$
 \frac{\Delta^q_{j,j+1}}{\Delta^q_{k,k+1}}
 =q^{-\sum_{s=j+1}^{k}(c_s-1)},
$$
and hence~\eqref{eq:general-chi}.
\end{proof}

\begin{ex}
Consider for instance Example~\ref{qFEx}.
Recall that the quiddity of the considered frieze is $(1,4,2,1,3,2,2)$.
The entry $\{7\}_q$ in this example can be calculated as follows.
Recurrence~\eqref{eq:right-recurrence} gives
$$
 V_{-1}=\binom10,
\quad
V_0=\binom11,
\quad
 V_1=\binom{q[3]_q}{[4]_q},
 \quad
 V_2=\binom{q[3]_q[2]_q-q^3}{\{7\}_q}.
$$
At $q=1$, the corresponding projective points are
$$
 x_{-1}=\infty,
 \qquad x_0=1,
 \qquad x_1=\frac34,
 \qquad x_2=\frac57.
$$
The relevant determinants are
\begin{gather*}
 \Delta^q_{-1,0}=1,
 \qquad  \Delta^q_{0,1}=1,
 \qquad  \Delta^q_{1,2}=q^3,
 \qquad  \Delta^q_{-1,2}=\{7\}_q,\\
  \Delta^q_{-1,1}=[4]_q,
 \qquad
  \Delta^q_{0,2}=[2]_q.
\end{gather*}
Therefore
\begin{equation}\label{eq:example-seven}
 \gr_q(x_{-1},x_0;x_1,x_2)
 =\frac{\{7\}_q}{q^3},
\end{equation}
in full accordance with~\eqref{eq:main-chi}.
\end{ex}

\section{Modular covariance, algebraic independence, and separation}\label{GenSec}

We now prove the covariance and algebraic independence results, Theorems~\ref{TayPro} and~\ref{InvThm}.

\subsection{Proof of Theorem~\ref{TayPro}}

The proof uses the full $\PGL(2,\Z)$-equivariance of the quantization
map~\eqref{qMap}.  
The elements of $\PSL(2,\Z)$ act by ordinary
fractional-linear transformations over $\Q(q)$, 
whereas the extra generator
$\mathcal I$ replaces $q$ by~$q^{-1}$.  
This is exactly the source of the
transformation law~\eqref{PGLCrq}.

We first prove~\eqref{PGLCrq}.  If $M\in\PSL(2,\Z)$, then $M_q$ is a
fractional-linear transformation.  
Recall that the $q$-deformed generators are
$$
T_q(z)=qz+1,
\qquad
S_q(z)=-\frac1{qz},
$$
and hence they are represented, over the field $\Q(q)$, by the matrices
$$
\begin{pmatrix}q&1\\0&1\end{pmatrix},
\qquad
\begin{pmatrix}0&-1\\q&0\end{pmatrix},
$$
respectively.  Thus $M_q\in\PGL(2,\Q(q))$.  
Applying equivariance to all
four arguments and using the classical invariance of the cross-ratio gives
\begin{equation}
\label{PSLCrqProof}
\crr_q\bigl(M(x_1),M(x_2);M(x_3),M(x_4)\bigr)
=
\crr_q(x_1,x_2;x_3,x_4).
\end{equation}
Since this is an identity of rational functions, the corresponding Taylor coefficients agree.

Now let $M\in\PGL(2,\Z)$ have determinant $-1$.  
Write
$$
M=N\circ\mathcal I,
\qquad N\in\PSL(2,\Z),
\qquad \mathcal I(x)=\frac1x.
$$
By~\eqref{PGLEq} and the definition of $\mathcal I_q$,
$$
[M(x)]_q
=N_q\!\left(\frac1{[x]_{q^{-1}}}\right).
$$
Since $N_q$ is fractional-linear, its simultaneous action on the four arguments cancels by projective invariance.  
The ordinary inversion $z\mapsto1/z$ also leaves the
classical cross-ratio invariant.  
Therefore
\begin{align*}
&\crr_q\bigl(M(x_1),M(x_2);M(x_3),M(x_4)\bigr)
\\
&\quad=
\crr\left(
\frac1{[x_1]_{q^{-1}}},\frac1{[x_2]_{q^{-1}}};
\frac1{[x_3]_{q^{-1}}},\frac1{[x_4]_{q^{-1}}}
\right)
\\
&\quad=
\crr\bigl([x_1]_{q^{-1}},[x_2]_{q^{-1}};
          [x_3]_{q^{-1}},[x_4]_{q^{-1}}\bigr)
\\
&\quad=
\crr_{q^{-1}}(x_1,x_2;x_3,x_4).
\end{align*}
Together with~\eqref{PSLCrqProof}, this proves~\eqref{PGLCrq} for all
$M\in\PGL(2,\Z)$.

It remains to translate this identity into Taylor coefficients.
Put $q=e^h$ and use~\eqref{CrT}.  Formula~\eqref{PGLCrq} becomes
\[
\crr_{e^h}\bigl(M(x_1),M(x_2);M(x_3),M(x_4)\bigr)
=
\crr_{e^{\det(M)h}}(x_1,x_2;x_3,x_4).
\]
Expanding both sides in powers of~$h$, we obtain
$$
\sum_{n\geq0}
\frac{
\crr^{(n)}\bigl(M(x_1),M(x_2);M(x_3),M(x_4)\bigr)h^n
}
{n!}
=
\sum_{n\geq0}
\frac{
\det(M)^n\crr^{(n)}(x_1,x_2;x_3,x_4)h^n
}
{n!}.
$$
Comparison of coefficients gives~\eqref{PGLTaylor}, and~\eqref{PGLFirst}
is the case $n=1$.

Theorem~\ref{TayPro} is proved.

\subsection{Proof of Theorem~\ref{InvThm}}

Fix $N\geq1$.  We prove that the first $N+1$ coefficients
$
\crr,\crr',\ldots,\crr^{(N)}
$
are algebraically independent.

Consider the special family of quadruples
\[
(\infty,0;1,x),
\qquad x\in\Q\setminus\{0,1\}.
\]
Since $[\infty]_q=\infty$, $[0]_q=0$, and $[1]_q=1$, we have
\begin{equation}
\label{SpecialCr}
\crr_{e^h}(\infty,0;1,x)=[x]_{e^h}.
\end{equation}
It is enough to prove algebraic independence after restriction to
this family.

Allow now the coefficients $c_0,\ldots,c_N$ of a negative continued fraction to
vary near $(1,2,\ldots,2)$ in~$\C^{N+1}$. 
We interpolate the finite continued-fraction coefficients, prove that the resulting map to the first $N+1$ 
Taylor coefficients has a nonzero Jacobian, and then use the integer specializations to exclude polynomial relations.
For this interpolation (only), put
\[
[c]_{e^h}:=\frac{e^{ch}-1}{e^h-1},
\]
where the value at $h=0$ is obtained by continuity.
This interpolation is only an auxiliary analytic device; 
it should not be confused with the theories of \(q\)-deformed real or complex numbers.

Let $c_0,\ldots,c_N$ be $N+1$ variables and define recursively
\begin{equation}
\label{InterpolatingCF}
F_N(h)=[c_N]_{e^h},
\qquad
F_j(h)=[c_j]_{e^h}
-\frac{e^{(c_j-1)h}}{F_{j+1}(h)}
\quad(0\leq j<N).
\end{equation}
When $c_0\in\Z$ and $c_1,\ldots,c_N\geq2$ are integers, formula~\eqref{qa} shows that
$F_0(h)=[x]_{e^h}$ for
\[
x=\NCF{c_0,c_1,\ldots,c_N}.
\]
Write the Taylor expansion
\begin{equation}
\label{JetExpansion}
F_0(h)=:\sum_{i\geq0}a_i(c_0,\ldots,c_N)h^i.
\end{equation}
For every fixed $i$, the coefficient $a_i$ is a rational function of
$c_0,\ldots,c_N$, regular in a neighborhood of $(1,2,\ldots,2)$.

\begin{lem}
\label{JacLem}
The Jacobian of $a_0,\ldots,a_N$ in~\eqref{JetExpansion} with respect to
$c_0,\ldots,c_N$ is nonzero at $(1,2,\ldots,2)$.  
\end{lem}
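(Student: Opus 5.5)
The plan is to compute the Jacobian at the special point directly, exploiting that all the continued-fraction steps coincide there, and to reduce its nonvanishing to a Vandermonde-type determinant in the position index $j$.

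\emph{Matrix form and reduction to a rank statement.} Write \eqref{InterpolatingCF} in matrix form: $F_0=M_0M_1\cdots M_N(\infty)$, where
\[
M_j=\begin{pmatrix}[c_j]_{e^h}&-e^{(c_j-1)h}\\ 1&0\end{pmatrix},
\]
acting by fractional-linear transformations. A variation of $c_j$ enters only through $M_j$, so the chain rule gives
\[
\frac{\partial F_0}{\partial c_j}
=\bigl(\text{push-forward by } M_0\cdots M_{j-1} \text{ of the vector field } \xi_j\bigr)\Big|_{F_0},
\]
where $\xi_j$ is the infinitesimal generator obtained by differentiating $M_j$ in $c_j$. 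From $\partial_c[c]_{e^h}=he^{ch}/(e^h-1)=1+O(h)$, this generator is a translation $\partial_z$ corrected by terms of order $h$, which are explicit (they include a dilation part). The lemma is then equivalent to the following statement: the $N+1$ power series $g_j(h):=\partial F_0/\partial c_j$, $0\le j\le N$, evaluated at $(1,2,\ldots,2)$ and truncated modulo $h^{N+1}$, are linearly independent.

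\emph{Exploiting the homogeneity at the base point.} At $(1,2,\ldots,2)$ every $M_j$ with $j\ge1$ equals one fixed matrix $A_h$, the $q$-deformation of $TS$-type with $c=2$, and $M_0$ is a fixed matrix as well. Hence $g_j$ is the evaluation, at a fixed point, of $A_h^{\,j-1}\,\xi\,A_h^{-(j-1)}$ conjugated by the fixed $M_0$. For $j=0$ there is one separate but explicit term. At $h=0$ the matrix $A_0$ is parabolic, i.e.\ unipotent up to sign, so $A_h=\pm\exp\bigl(\nu+h\mu+O(h^2)\bigr)$ with $\nu$ nilpotent. Expanding $A_h^{\,m}=\pm\exp\bigl(m\log(\pm A_h)\bigr)$ in $h$ shows that the coefficient of $h^k$ in $g_j$ is a polynomial $P_k(j)$ in $j$. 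We will establish the degree bound and the precise leading behaviour by tracking how many factors of $m$ each order of $h$ can produce: the nilpotent part contributes at most quadratic growth, and each extra power of $h$ brings at most one extra factor of $m$ through $h\mu$. The goal is to show that, after a fixed triangular change of basis in the $h^k$-coefficients (subtracting the contributions forced by lower orders), the resulting polynomials have distinct exact degrees covering $0,1,\ldots,N$, and that the isolated $j=0$ column is handled separately.

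\emph{Conclusion and main obstacle.} Once this is in place, the Jacobian matrix $\bigl(\partial a_k/\partial c_j\bigr)$ becomes, up to invertible triangular operations on rows, the matrix $\bigl(P_k(j)\bigr)$. Its determinant is a product of nonzero leading coefficients times a Vandermonde determinant in $j=0,\ldots,N$, hence nonzero. The main obstacle is exactly the degree bookkeeping: proving that the leading coefficient of the $h^k$-term in $j$ does not cancel. The dilation correction in $\xi$ and the term $h\mu$ in $\log A_h$ interact, and the parabolic structure at $h=0$ makes the naive degree count ambiguous by one or two. I would first attempt this by a direct computation in a basis where $A_0$ is $\pm\begin{pmatrix}1&1\\0&1\end{pmatrix}$, using the $h$-adjusted fixed point $1/(1-q)$ from the left-rational remark to normalise $\xi$. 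If the degrees do not come out cleanly, the fallback is induction on $N$ using $F_0=[c_0]-e^{(c_0-1)h}/F_1$, checking only that the new top-order coefficient $a_N$ picks up a nonzero contribution from a direction not already spanned.
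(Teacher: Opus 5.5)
Your reduction is sound: the Jacobian is nonzero exactly when the truncations modulo $h^{N+1}$ of $g_j=\partial F_0/\partial c_j$, $0\le j\le N$, are linearly independent. At $(1,2,\ldots,2)$ the $h^k$-coefficient of $g_j$ is indeed a polynomial $P_k(j)$.

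\textbf{The gap is the endgame.} You never compute it, and as you state it, it is false. No invertible recombination of the rows, triangular or not, changes their span. That span is not $\C[j]_{\le N}$, so polynomials of exact degrees $0,1,\ldots,N$ and a plain Vandermonde in $j$ can never appear.

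This is visible already at order $h^0$. At $q=1$ one has $\partial F_0/\partial c_j=\prod_{k=1}^{j}F_k^{-2}$ with $F_k=(N-k+2)/(N-k+1)$, so
\[
P_0(j)=\frac{(N+1-j)^2}{(N+1)^2}.
\]
This is quadratic in $j$, and there is no lower order to subtract from it.

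In fact the paper computes
\[
g_j=\frac{hq^{N+1}}{(q-1)[N+1]_q^2}\,q^{-r}[r]_q\bigl([r-1]_q+q^r\bigr),\qquad r=N+1-j .
\]
The prefactor is a $j$-independent unit, so $j=0$ needs no separate treatment: it is just $r=N+1$. Remove this unit and multiply by the unit $(e^h-1)^2/h^2$. The rows then become the polynomials $p_i(r)$ of \eqref{CoefficientPolynomial}, with the following structure:
\begin{itemize}
\item for even $i$, $p_i$ has degree $i+2$ and contains only even powers of $r$;
\item for odd $i$, $p_i$ has degree $i$ and contains only odd powers of $r$;
\item every $p_i$ vanishes at $r=0$, i.e.\ at $j=N+1$.
\end{itemize}
So the row space is spanned by polynomials of degrees $1,\ldots,N+1$ when $N$ is odd, and $1,\ldots,N,N+2$ when $N$ is even, and it never contains the constants. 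The shift ``by one or two'' that you flag is not a bookkeeping nuisance; it is the whole structure of the problem.

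To complete your route you need two things you have not supplied.
\begin{enumerate}
\item The closed form of $g_j$. Your conjugation picture gives it if you diagonalize $A_h$ (eigenvalues $1$ and $q$ for $h\neq0$) rather than taking the logarithm of a parabolic matrix. Up to the unit, this yields a combination of $q^{r}$, $1$, $q^{-r}$, namely $(q-1)^{-2}\bigl(q^{r+1}-q^{r}+q^{r-1}+q^{-r}-q-q^{-1}\bigr)$.
\item The correct determinant argument. Divide each evaluation column by $r_j$; this is legitimate precisely because $p_i(0)=0$ for all $i$.
\begin{itemize}
\item For $N=2m-1$ this gives an honest Vandermonde determinant at $r=1,\ldots,2m$.
\item For $N=2m$, the top row becomes, after division, of degree $2m+1$ with no $r^{2m}$ term, by parity. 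One then needs the generalized Vandermonde determinant with exponents $0,\ldots,2m-1,2m+1$, which equals $(r_1+\cdots+r_{2m+1})\prod_{a<b}(r_b-r_a)\neq0$.
\end{itemize}
\end{enumerate}

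Your fallback induction on $N$ does not avoid this. Nonvanishing of the relevant Schur complement is again a statement about the $h^N$-coefficient of an explicit combination of the $g_j$, and that requires the same computation.
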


\begin{proof}
Put $q=e^h$.
At this point $(1,2,\ldots,2)$, a direct induction in~\eqref{InterpolatingCF} gives
\begin{equation}
\label{SpecialTail}
F_j(h)=\frac{[N-j+2]_q}{[N-j+1]_q},
\qquad 1\leq j\leq N,
\end{equation}
and
\[
F_0(h)=1-\frac{[N]_q}{[N+1]_q}
      =\frac{q^N}{[N+1]_q}.
\]
For $j\geq1$, set temporarily $r=N-j+1$.  Differentiation at the $j$-th
level gives
\[
\left.\frac{\partial F_j}{\partial c_j}\right|_{(1,2,\ldots,2)}
=
\frac{h q\bigl([r-1]_q+q^r\bigr)}{(q-1)[r]_q},
\]
while propagation from the $j$-th level to $F_0$ contributes the factor
\[
q^{j-1}\frac{[r]_q^2}{[N+1]_q^2}.
\]
For $j=0$, one differentiates $F_0$ directly.  In both cases one obtains
\begin{equation}
\label{DerivativeCF}
\left.
\frac{\partial F_0(h)}{\partial c_j}
\right|_{(1,2,\ldots,2)}
=
\frac{h q^{N+1}}{(q-1)[N+1]_q^2}\,
q^{-r}[r]_q\bigl([r-1]_q+q^r\bigr),
\qquad r=N-j+1.
\end{equation}

The first factor in~\eqref{DerivativeCF} is independent of $j$ and has
constant term $1/(N+1)^2$.  Multiplication by such a power series is an
invertible lower triangular operation on Taylor coefficients.  Consequently, the Jacobian in question is nonzero if and only if the
$(N+1)\times(N+1)$ matrix formed by the coefficients of
$h^0,h^1,\ldots,h^N$ in the series
\[
q^{-r}[r]_q\bigl([r-1]_q+q^r\bigr),
\qquad r=1,2,\ldots,N+1,
\]
has nonzero determinant.

Multiplication of all these series by the common unit
$(e^h-1)^2/h^2$ again does not change the nonvanishing of the determinant.
A direct calculation gives
\begin{equation}
\label{ElementarySeries}
\frac{(e^h-1)^2}{h^2}\,
q^{-r}[r]_q\bigl([r-1]_q+q^r\bigr)
=
\frac{
 e^{(r+1)h}-e^{rh}+e^{(r-1)h}+e^{-rh}-e^h-e^{-h}
}{h^2}.
\end{equation}
The coefficient of $h^i$ in the right-hand side is the polynomial
\begin{equation}
\label{CoefficientPolynomial}
\frac{
(r+1)^{i+2}-r^{i+2}+(r-1)^{i+2}+(-r)^{i+2}
-1-(-1)^{i+2}
}{(i+2)!}.
\end{equation}
For $i=2k$, this is
\[
\frac{(r+1)^{2k+2}+(r-1)^{2k+2}-2}{(2k+2)!};
\]
it has degree $2k+2$, leading coefficient $2/(2k+2)!$, and contains
only even powers of $r$.  For $i=2k+1$, expression~\eqref{CoefficientPolynomial}
becomes
\[
\frac{(r+1)^{2k+3}+(r-1)^{2k+3}-2r^{2k+3}}{(2k+3)!};
\]
it has degree $2k+1$, leading coefficient $1/(2k+1)!$, and contains
only odd powers of $r$.

Suppose first that $N=2m-1$.  After reordering the rows by degree, the
polynomials~\eqref{CoefficientPolynomial} have degrees
\[
1,2,\ldots,2m.
\]
All of them vanish at $r=0$.  
Factoring the value $r_j$ from the $j$th evaluation column, their
quotients have consecutive degrees $0,1,\ldots,2m-1$ and nonzero leading
coefficients.  Their evaluation determinant at the distinct points
$r=1,2,\ldots,2m$ is therefore a nonzero multiple of the ordinary
Vandermonde determinant.

Now suppose that $N=2m$.  The first $2m$ rows give, after the same division
by $r$, a basis of the polynomials of degrees at most $2m-1$.  The remaining
row, corresponding to $i=2m$, has degree $2m+1$ after division by $r$ and
has no term of degree $2m$.  Eliminating its lower-degree terms reduces the
determinant to a nonzero multiple of
\[
\det\bigl(r_j^{d}\bigr)_{
 d=0,1,\ldots,2m-1,\,2m+1}^{\quad j=1,\ldots,2m+1}
=
(r_1+\cdots+r_{2m+1})
\prod_{a<b}(r_b-r_a),
\]
where $r_j=j$.  This determinant is again nonzero.  Thus, for every $N$,
\begin{equation}
\label{NonzeroJetJacobian}
\det\left(
\frac{\partial a_i}{\partial c_j}
\right)_{0\leq i,j\leq N}
\Bigg|_{(1,2,\ldots,2)}
\neq0.
\end{equation}
\end{proof}

We can now conclude the proof.  Suppose that a polynomial
$P(T_0,\ldots,T_N)$ satisfies
\[
P(\crr,\crr',\ldots,\crr^{(N)})=0
\]
for every rational quadruple.  
By~\eqref{JetExpansion}, \(a_i\) is the coefficient of \(h^i\), whereas \(\crr^{(i)}\) is the \(i\)-th derivative.  
Hence
\[
\crr^{(i)}=i!\,a_i
\]
on the family~\eqref{SpecialCr}.
By~\eqref{SpecialCr}, for all integral
$c_0,\ldots,c_N\geq2$ we then have
\[
P\bigl(a_0,1!a_1,2!a_2,\ldots,N!a_N\bigr)=0.
\]
The left-hand side is a rational function of the $c_j$.
After clearing denominators, we obtain a polynomial that vanishes
on a Cartesian product of infinite subsets of $\Z$, and hence
vanishes identically.
Thus the original rational function vanishes identically,
in particular in a neighborhood of $(1,2,\ldots,2)$.
But~\eqref{NonzeroJetJacobian} and the local inverse function theorem show
that the map
\[
(c_0,\ldots,c_N)\longmapsto(a_0,\ldots,a_N)
\]
has an open image near that point.  Hence $P$ vanishes on an open subset of
$\C^{N+1}$, and so $P=0$.  This proves the algebraic independence of
$\crr,\crr',\ldots,\crr^{(N)}$.  Since $N$ is arbitrary,
Theorem~\ref{InvThm} follows.

\subsection{Fixed-triple separation and infinite fibers}
\label{subsec:completeness}
In this section, we demonstrate that $\crr_q$ is not a complete invariant
of quadruples of points of~$\pP^1(\Q)$.

We first prove an elementary separation property.

\begin{prop}
\label{prop:fixed-triple-separation}
Let $(x_1,x_2,x_3)$ be a fixed ordered triple of pairwise distinct points of
$\pP^1(\Q)$.  Then the map
\[
 x_4\longmapsto \crr_q(x_1,x_2;x_3,x_4)
\]
is injective on $\pP^1(\Q)\setminus\{x_1,x_2,x_3\}$.
\end{prop}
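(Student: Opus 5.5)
The plan is to reduce the statement to two facts: classical cross-ratio injectivity over the field $\Q(q)$, and injectivity of the quantization map $x\mapsto[x]_q$.

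\emph{Step 1: reduce to injectivity of $[\,\cdot\,]_q$.} Put $z_i=[x_i]_q\in\pP^1(\Q(q))$. By definition~\eqref{Crq}, $\crr_q(x_1,x_2;x_3,x_4)=\crr(z_1,z_2;z_3,z_4)$, where $\crr$ is the classical cross-ratio over the field $\Q(q)$. Suppose $z_1,z_2,z_3$ are pairwise distinct. Then the map $z\mapsto\crr(z_1,z_2;z_3,z)$ is a fractional-linear transformation of $\pP^1(\Q(q))$. Indeed, from~\eqref{ClassEq} it is the projective map sending $z_3\mapsto 0$, $z_4=z_1\mapsto\infty$ and $z_2\mapsto1$; its matrix is invertible precisely because $z_1,z_2,z_3$ are distinct. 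So it is injective on $\pP^1(\Q(q))$. The statement therefore follows once we know that $x\mapsto[x]_q$ is injective on $\pP^1(\Q)$. This gives both that $z_1,z_2,z_3$ are distinct and that distinct $x_4\neq x_4'$ have $[x_4]_q\neq[x_4']_q$. A minor point is that the values for $x_4\notin\{x_1,x_2,x_3\}$ are genuine elements of $\Q(q)$, possibly $\infty$ if $x_4=\infty$. Injectivity of a projective map on all of $\pP^1(\Q(q))$ handles this uniformly.

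\emph{Step 2: injectivity of quantization.} For finite $x$, $[x]_q$ is a rational function regular at $q=1$ with $[x]_1=x$, as recalled before Theorem~\ref{TayPro}. Also $[\infty]_q=\infty$, whereas finite $x$ give finite values. Hence if $[x]_q=[y]_q$ with $x,y$ finite, specializing at $q=1$ gives $x=y$. If exactly one of $x,y$ is $\infty$, the values differ because one is finite and the other is not.

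\emph{Where care is needed.} The only subtle point is whether specialization at $q=1$ is legitimate, since a rational function could have its denominator vanish at $q=1$. This is covered by the paper's standing fact that every finite $q$-rational is regular at $q=1$ with value $x$. If needed, one can justify it from the continued fraction~\eqref{qa}: the numerator and denominator polynomials $\Rc,\Sc$ specialize at $q=1$ to the coprime integers $a,b$ with $b>0$. Alternatively one can argue directly with lifts. Equality $[x]_q=[y]_q$ in $\pP^1(\Q(q))$ means $\Delta^q_{xy}=0$ as a polynomial, and at $q=1$ this specializes to $\Delta_{xy}=a_xb_y-a_yb_x\neq0$ for $x\neq y$. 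That is a contradiction, so the argument covers $\infty$ as well.
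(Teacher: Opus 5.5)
Your proof is correct. It rests on the same two facts as the paper's proof: specialization at $q=1$ (via $[x]_1=x$), and the fractional-linearity of the cross-ratio in its fourth argument. You apply them in the opposite order.

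The paper specializes the assumed identity $\crr_q(x_1,x_2;x_3,x_4)=\crr_q(x_1,x_2;x_3,y_4)$ at $q=1$ and then uses injectivity of the classical cross-ratio over $\Q$. You first use injectivity of $z\mapsto\crr(z_1,z_2;z_3,z)$ over $\Q(q)$. You then reduce to injectivity of $x\mapsto[x]_q$, which you prove by specializing single $q$-rationals.

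The paper's route is one step shorter. Yours has a small technical advantage: you never need the whole cross-ratio to be regular at $q=1$. In the paper that regularity implicitly uses $x_1\neq x_4$ and $x_2\neq x_3$, together with the homogeneous convention at $\infty$. Your route also isolates the injectivity of the quantization map $\pP^1(\Q)\to\pP^1(\Q(q))$ as a separate statement. Both approaches ultimately rest on the same specialization fact.

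One harmless slip: by \eqref{ClassEq}, the map $z\mapsto\crr(z_1,z_2;z_3,z)$ sends $z_2\mapsto0$, $z_3\mapsto1$ and $z_1\mapsto\infty$, so you swapped the roles of $z_2$ and $z_3$. This does not affect invertibility. Your claim that the matrix is invertible exactly when $z_1,z_2,z_3$ are pairwise distinct is correct: its determinant is $\pm\Delta_{12}\Delta_{13}\Delta_{23}$.
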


\begin{proof}
Suppose that
\[
 \crr_q(x_1,x_2;x_3,x_4)
 =\crr_q(x_1,x_2;x_3,y_4).
\]
Evaluation at $q=1$ gives
\[
 \crr(x_1,x_2;x_3,x_4)
 =\crr(x_1,x_2;x_3,y_4).
\]
For fixed pairwise distinct $x_1,x_2,x_3$, the classical cross-ratio is a
fractional-linear, hence injective, function of its fourth argument.
Therefore $x_4=y_4$.
\end{proof}

The global situation is very different.  Recall that the classical Klein
four-group
\[
 V_4=\{1,(12)(34),(13)(24),(14)(23)\}\subset S_4
\]
fixes the cross-ratio.  Since $\crr_q$ is the ordinary cross-ratio of the four
$q$-rational points, it is also invariant under~$V_4$.  It therefore descends
to the quotient by the joint action of $\PSL(2,\Z)$ and~$V_4$.  For the full
$\PGL(2,\Z)$-action, the corresponding invariant is the unordered pair
$\{\crr_q,\crr_{q^{-1}}\}$.

\begin{prop}
\label{prop:infinite-klein-fiber}
For every integer $n\geq2$, set
\[
 X_n=\left(-n,-\frac1n;1,-1\right),
 \qquad
 Y=(-1,1;\infty,0).
\]
Then
\begin{equation}
\label{eq:infinite-klein-fiber}
 \crr_q(X_n)=\crr_q(Y)=-q.
\end{equation}
Moreover, the quadruples
\[
 Y,X_2,X_3,\ldots
\]
are pairwise non-equivalent even under the joint action of
$\PGL(2,\Z)$ and the full symmetric group~$S_4$.
\end{prop}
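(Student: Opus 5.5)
The plan is to prove \eqref{eq:infinite-klein-fiber} by direct computation from the recurrences \eqref{RecEq}. Non-equivalence will then follow from a classical invariant: the multiset of absolute values of the Pl\"ucker determinants $|\Delta_{ij}|$ of the normalized primitive lifts \eqref{LiftEq}.

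\emph{Step 1: the $q$-values.} From \eqref{RecEq} and the formula for negative integers in \eqref{qBn} we get
\[
[-n]_q=\tfrac{1-q^{-n}}{1-q}=-q^{-n}[n]_q,\qquad
\left[-\tfrac1n\right]_q=-\tfrac{1}{q[n]_q},\qquad
[1]_q=1,\qquad [-1]_q=-q^{-1}.
\]
We also use $[\infty]_q=\infty$ and $[0]_q=0$.

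\emph{Step 2: the quadruple $Y$.} Here $x_3=\infty$, so in \eqref{ClCR} the factors containing $x_3$ cancel (this is the homogeneous form \eqref{ClassEq}). This leaves
\[
\crr_q(Y)=\frac{[1]_q-[0]_q}{[-1]_q-[0]_q}=\frac{1}{-q^{-1}}=-q .
\]

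\emph{Step 3: the quadruples $X_n$.} Put $z_1=-q^{-n}[n]_q$, $z_2=-1/(q[n]_q)$, $z_3=1$, $z_4=-q^{-1}$. Using $[n]_q+q^n=[n+1]_q$, $[n]_q-q^{n-1}=[n-1]_q$, $1+q[n]_q=[n+1]_q$ and $[n]_q-1=q[n-1]_q$, the four factors become
\[
z_1-z_3=-q^{-n}[n+1]_q,\qquad z_1-z_4=-q^{-n}[n-1]_q,
\]
\[
z_2-z_4=\tfrac{[n-1]_q}{[n]_q},\qquad z_2-z_3=-\tfrac{[n+1]_q}{q[n]_q}.
\]
Substituting into \eqref{ClCR}, everything cancels except $-q$.

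\emph{Step 4: non-equivalence.} Each $M\in\PGL(2,\Z)$ sends primitive integer lifts to primitive integer lifts up to sign, so each $|\Delta_{ij}|$ is preserved. A permutation in $S_4$ only permutes the six pairs $\{i,j\}$. Hence the unordered multiset $\{|\Delta_{ij}|\}$ is invariant under the joint action of $\PGL(2,\Z)$ and $S_4$.

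For $Y$ the lifts are $(-1,1),(1,1),(1,0),(0,1)$, which give the multiset $\{2,1,1,1,1,1\}$. For $X_n$ the lifts are $(-n,1),(-1,n),(1,1),(-1,1)$, which give
\[
\{\,n^2-1,\;n+1,\;n+1,\;n-1,\;n-1,\;2\,\}.
\]
The maximal entries are $2$ for $Y$ and $n^2-1\ge 3$ for $X_n$. These are pairwise distinct, so the quadruples $Y,X_2,X_3,\ldots$ are pairwise inequivalent.

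The only step requiring care is the algebra in Step~3, which comes down to the four $q$-integer identities listed there; the rest is routine.
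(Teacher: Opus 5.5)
Your proposal is correct and follows essentially the same route as the paper: the same four $q$-integer identities give the differences in $\crr_q(X_n)$, the same $\infty$-cancellation handles $Y$, and non-equivalence comes from the multiset of absolute Pl\"ucker determinants. Your multisets match the paper's exactly, and the separating quantity is the largest entry $n^2-1$ (strictly increasing in $n$, and always $\ge 3>2$).
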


\begin{proof}
The recurrence relations~\eqref{RecEq} give
\[
 [-n]_q=-q^{-n}[n]_q,
 \qquad
 \left[-\frac1n\right]_q=-\frac1{q[n]_q},
 \qquad
 [-1]_q=-q^{-1}.
\]
It follows that
\begin{align*}
 [-n]_q-1&=-q^{-n}[n+1]_q,
 &
 \left[-\frac1n\right]_q-[-1]_q&=\frac{[n-1]_q}{[n]_q},\\
 [-n]_q-[-1]_q&=-q^{-n}[n-1]_q,
 &
 \left[-\frac1n\right]_q-1&=-\frac{[n+1]_q}{q[n]_q}.
\end{align*}
Substitution in the definition of the $q$-cross-ratio yields
\begin{align*}
 \crr_q(X_n)
 &=
 \frac{
 \bigl(-q^{-n}[n+1]_q\bigr)
 \dfrac{[n-1]_q}{[n]_q}}
 {
 \bigl(-q^{-n}[n-1]_q\bigr)
 \left(-\dfrac{[n+1]_q}{q[n]_q}\right)}
 =-q.
\end{align*}
Using the usual projective convention when one of the points is~$\infty$,
we also have
\[
 \crr_q(Y)
 =\frac{[1]_q-[0]_q}{[-1]_q-[0]_q}
 =-q.
\]
This proves~\eqref{eq:infinite-klein-fiber}.

For an ordered quadruple $X=(x_1,x_2,x_3,x_4)$, let
\[
 \mathcal D(X)
 =\bigl\{|\Delta_{ij}|:1\leq i<j\leq4\bigr\}
\]
denote the multiset of the six absolute Pl\"ucker determinants of the
primitive lifts~\eqref{LiftEq}.  This multiset is invariant under
$\PGL(2,\Z)$, while a permutation of the four points merely permutes its
entries.  Direct calculation gives
\[
 \mathcal D(X_n)
 =\{n^2-1,n+1,n+1,n-1,n-1,2\}
\]
and
\[
 \mathcal D(Y)=\{2,1,1,1,1,1\}.
\]
Thus $X_n$ is not equivalent to~$Y$.  Furthermore, the largest member of
$\mathcal D(X_n)$ is $n^2-1$, which strictly increases with~$n$.
Hence the quadruples $X_n$ are pairwise non-equivalent, even after arbitrary
relabeling.  Finally, all the above quadruples have the same unordered pair
$\{-q,-q^{-1}\}$.
\end{proof}

Proposition~\ref{prop:infinite-klein-fiber} implies the following.

\begin{cor}
The fiber over $-q$ contains infinitely many orbits
for the joint action of $\PSL(2,\Z)$ and~$V_4$, and the invariant
$\{\crr_q,\crr_{q^{-1}}\}$ is not complete for the joint action of
$\PGL(2,\Z)$ and~$V_4$.
\end{cor}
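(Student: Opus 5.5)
The corollary follows almost formally from Proposition~\ref{prop:infinite-klein-fiber}, combined with the covariance law of Theorem~\ref{TayPro}(i). The plan is to compare orbits of the smaller group with orbits of the larger group, and then read off both claims.

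\emph{First claim.} By Proposition~\ref{prop:infinite-klein-fiber}, every quadruple $X_n$ with $n\geq2$ satisfies $\crr_q(X_n)=-q$, so all of them lie in the fiber over $-q$. The joint action of $\PSL(2,\Z)$ and $V_4$ is a subgroup of the joint action of $\PGL(2,\Z)$ and $S_4$. Hence each orbit of the smaller group is contained in a single orbit of the larger one. Two quadruples in the same $\PSL(2,\Z)\times V_4$-orbit would therefore be equivalent under $\PGL(2,\Z)\times S_4$. The proposition shows that $X_2,X_3,\ldots$ are pairwise non-equivalent under this larger action, via the strictly increasing maximum $n^2-1$ of the multiset $\mathcal D(X_n)$. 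So the $X_n$ lie in pairwise distinct $\PSL(2,\Z)\times V_4$-orbits, and the fiber over $-q$ meets infinitely many orbits. For this to be a statement about a well-defined fiber, $\crr_q$ must be constant on these orbits. This holds by~\eqref{PGLCrq} with $\det M=1$ and by the $V_4$-invariance of the classical cross-ratio applied to the $q$-rational points.

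\emph{Second claim.} First I verify that the unordered pair $\{\crr_q,\crr_{q^{-1}}\}$ is an invariant of the joint action of $\PGL(2,\Z)$ and $V_4$. Invariance under $V_4$ is as above. For a determinant $-1$ element, \eqref{PGLCrq} sends $\crr_q$ to $\crr_{q^{-1}}$, and substituting $q\mapsto q^{-1}$ in the same identity sends $\crr_{q^{-1}}$ to $\crr_q$; thus the pair is swapped. Every $X_n$ and $Y$ has $\crr_q=-q$, so each has the pair $\{-q,-q^{-1}\}$. Yet these quadruples are pairwise inequivalent even under $\PGL(2,\Z)\times S_4$, and a fortiori under $\PGL(2,\Z)\times V_4$. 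Hence the invariant does not separate orbits, i.e.\ it is not complete.

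I do not expect any step to be a genuine obstacle. The only point that needs care is keeping track of which group is acting, so that non-equivalence under the larger group is correctly transferred to the smaller one.
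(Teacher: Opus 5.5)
Your proposal is correct and follows the paper's route. The paper reads the corollary directly off Proposition~\ref{prop:infinite-klein-fiber}: all the quadruples share $\crr_q=-q$, hence the pair $\{-q,-q^{-1}\}$, and they are pairwise non-equivalent under $\PGL(2,\Z)$ and $S_4$ because the largest absolute Pl\"ucker determinant $n^2-1$ grows with $n$. Your checks that $\crr_q$ is constant on orbits and that the unordered pair is invariant (via~\eqref{PGLCrq} and $V_4$-symmetry), and your transfer of non-equivalence from the larger group to the smaller one, spell out what the paper leaves implicit.
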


\begin{rem}
The preceding family has classical cross-ratio $-1$, but the failure of
completeness is not confined to the harmonic case.  For every $n\geq3$,
\[
 \crr_q(-n,1;0,\infty)
 =
 \crr_q\left(-n,-\frac1n;-1,\infty\right)
 =[-n]_q.
\]
The two corresponding multisets of absolute Pl\"ucker determinants are
\[
 \{n+1,n,1,1,1,1\}
 \qquad\hbox{and}\qquad
 \{n^2-1,n,n-1,n-1,1,1\}.
\]
Their largest members are different for $n\geq3$, so the two quadruples are
not equivalent even after arbitrary relabeling.  At $q=1$, their common
cross-ratio is $-n$.
\end{rem}

Proposition~\ref{prop:infinite-klein-fiber} is fully compatible with
Theorem~\ref{InvThm}.  Algebraic independence concerns the invariant
functions as the quadruple varies; it does not imply point separation.
Indeed, on the infinite family above the whole rational function $\crr_q$ is
identical, and therefore all of its logarithmic Taylor coefficients at
$q=1$ are identical as well.

\section{The first two derivatives of $[x]_q$ at $q=1$}\label{TDerSec}

Let $x$ be a rational number
$$
x=\frac ab,
\qquad
\gcd(a,b)=1,
\qquad
b>0.
$$
We will need explicit formulas for the first two derivatives of $[x]_q$ at $q=1$.
These formulas were obtained by Lasker~\cite{Las} using Stern-Brocot trees, 
we give short proofs based on the defining recurrences.

Let us use the following notation:
$$
D_1(x):=\left.\frac{d}{dq}[x]_q\right|_{q=1},
\qquad\qquad
D_2(x):=\left.\frac{d^2}{dq^2}[x]_q\right|_{q=1}.
$$

\subsection{The first derivative}

\begin{thm}
[\cite{Las}, Theorem 1.1] 
\label{FDerLem}
The first derivative of $[x]_q$ at $q=1$ is
\begin{equation}
\label{FDEq}
D_1(x)
=
\frac{x^2-x+1}{2}
-
\frac{1}{2b^2}.
\end{equation}
\end{thm}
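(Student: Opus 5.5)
Since the recurrences~\eqref{RecEq} generate all of $\pP^1(\Q)$ from $0$ via $T$ and $S$, I will prove~\eqref{FDEq} by induction along the action of $\PSL(2,\Z)$. Equivalently, I show that the candidate function
\[
E(x)=\frac{x^2-x+1}{2}-\frac{1}{2b^2}
\]
satisfies the same differentiated recurrences as $D_1$, together with the same initial value. At $x=0$ (so $a=0$, $b=1$) we get $E(0)=\tfrac12-\tfrac12=0$. On the other hand $[0]_q\equiv0$, so $D_1(0)=0$ as well.

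Next I differentiate~\eqref{RecEq} at $q=1$, using $[x]_1=x$. The translation relation $[x+1]_q=q[x]_q+1$ gives
\[
D_1(x+1)=x+D_1(x).
\]
Since $x$ and $x+1$ share the denominator $b$, the check for $E$ is the identity $\bigl((x+1)^2-(x+1)+1\bigr)-(x^2-x+1)=2x$, which holds. For the inversion, differentiating $[-1/x]_q=-1/(q[x]_q)$ gives
\[
D_1(-1/x)=\frac{x+D_1(x)}{x^2}, \qquad x\neq0 .
\]
The case $x=0$, i.e.\ the point $\infty$, is excluded because $[\infty]_q=\infty$. For $x=a/b$ with $a\neq0$, the point $-1/x$ has primitive lift $(-b\,\mathrm{sgn}\,a,\;|a|)$, so its denominator is $|a|$. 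Then
\[
E(-1/x)=\frac{1/x^2+1/x+1}{2}-\frac{1}{2a^2},
\]
and multiplying by $x^2$ gives $\tfrac12\bigl(1+x+x^2\bigr)-\tfrac{b^2}{2a^2}\cdot\tfrac{a^2}{b^2}$. Since $x^2/a^2=1/b^2$, this equals $x+E(x)$, as required.

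It remains to argue that these two relations determine $D_1$ on all finite rationals. Every finite $x$ is reached from $0$ by a word in $T^{\pm1}$ and $S$, and the path can be chosen to avoid $\infty$: follow the negative continued fraction $\NCF{c_0,\ldots,c_\ell}$ backwards, using $x\mapsto x-c$ and $x\mapsto -1/x$ only at nonzero points. Both $D_1$ and $E$ satisfy the same relations at each step, with $T^{-1}$ handled by the same identity read backwards, so induction on the length $\ell$ gives $D_1=E$.

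The main care point is the bookkeeping of the primitive denominator under $S$ and for negative $x$, where the new denominator is $|a|$. This is also where the normalization $b>0$ enters, and it is harmless because only $b^2$ and $a^2$ appear.
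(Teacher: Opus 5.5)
Your proof is correct and is essentially the paper's argument: you check that the candidate vanishes at $0$ and satisfies the two differentiated recurrences $D_1(x+1)=D_1(x)+x$ and $D_1(-1/x)=(x+D_1(x))/x^2$, then invoke uniqueness, which you justify a bit more explicitly via a continued-fraction path from $0$ that avoids $\infty$. There is one small slip in the inversion check: the subtracted term should be $\tfrac{1}{2a^2}\cdot\tfrac{a^2}{b^2}=\tfrac{1}{2b^2}$, not $\tfrac{b^2}{2a^2}\cdot\tfrac{a^2}{b^2}$, and your next sentence shows you meant the former.
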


\begin{proof}
Differentiating the first recurrence in~\eqref{RecEq} gives
$$
\frac{d}{dq}[x+1]_q
=
[x]_q+q\frac{d}{dq}[x]_q.
$$
Evaluating at $q=1$ and using $[x]_1=x$, we obtain
\begin{equation}
\label{FirstEq}
D_1(x+1)
=
D_1(x)+x.
\end{equation}

Similarly, differentiating the second recurrence in~\eqref{RecEq} one obtains
$$
\frac{d}{dq}\left[-\frac1x\right]_q=
\frac{1}{q^2[x]_q}+\frac{([x]_q)'}{q[x]_q^2}.
$$
At $q=1$ this gives
\begin{equation}
\label{SecEq}
D_1\!\left(-\frac1x\right)
=
\frac1x+\frac{D_1(x)}{x^2}.
\end{equation}
Relations \eqref{FirstEq}--\eqref{SecEq} together with 
the initial value $D_1(0)=0$ determine $D_1$ on $\Q$.

Define, for $x=a/b$
$$
F\!\left(\frac ab\right)
:=
\frac12\left(
x^2-x+1-\frac1{b^2}
\right).
$$
Clearly, $F(0)=0$.
Let us show that $F$ satisfies the same relations \eqref{FirstEq}--\eqref{SecEq}.

(a)
Since
$$
x+1=\frac{a+b}{b},
$$
the reduced denominator remains $b$ because $\gcd(a+b,b)=\gcd(a,b)=1$. 
Therefore
\begin{align*}
F(x+1)-F(x)
&=
\frac12\left(
(x+1)^2-(x+1)-x^2+x
\right)
\\
&=x.
\end{align*}
Thus $F$ satisfies~\eqref{FirstEq}.

(b)
One has
$$
F\!\left(-\frac1x\right)
=
\frac12\left(
\frac1{x^2}+\frac1x+1-\frac1{a^2}
\right).
$$
On the other hand, 
\begin{align*}
\frac1x+\frac{F(x)}{x^2}
&=
\frac1x
+
\frac1{2x^2}
\left(
x^2-x+1-\frac1{b^2}
\right)
\\
&=
\frac1x
+
\frac12
\left(
1-\frac1x+\frac1{x^2}
-\frac1{b^2x^2}
\right)
\\
&=
F\!\left(-\frac1x\right).
\end{align*}
Thus \(F\) satisfies the inversion relation~\eqref{SecEq}.

We have proved that $F$ and $D_1$ satisfy the same defining relations and agree at $0$.
Hence $F(x)=D_1(x)$ for all~$x$.
\end{proof}

\subsection{The second derivative: recurrence relation}

Let us start with the recurrence relation.

\begin{lem}
The first of the following relations holds for every rational $x$, and the second for $x\ne0$:
\begin{align}
D_2(x+1)&=D_2(x)+2D_1(x),
\label{eq:D2T}\\[4pt]
D_2\!\left(-\frac1x\right)
&=
-\frac2x
-\frac{2D_1(x)}{x^2}
+\frac{D_2(x)}{x^2}
-\frac{2D_1(x)^2}{x^3}.
\label{eq:D2S}
\end{align}
Moreover $D_2(0)=0$.
\end{lem}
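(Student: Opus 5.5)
The plan is to differentiate the two defining recurrences~\eqref{RecEq} twice in~$q$ and evaluate at $q=1$. This mirrors the proof of Theorem~\ref{FDerLem}, using $[x]_1=x$ and the first-derivative value $D_1(x)$ wherever it appears.

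For~\eqref{eq:D2T}, differentiate $[x+1]_q=q[x]_q+1$ once to get $[x]_q+q[x]_q'$. Differentiating again gives $2[x]_q'+q[x]_q''$. At $q=1$ this is $2D_1(x)+D_2(x)$, which is exactly~\eqref{eq:D2T}. The initial value $D_2(0)=0$ is immediate, because $[0]_q=0$ identically in~$q$.

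For~\eqref{eq:D2S}, put $f(q)=[x]_q$, which is nonzero near $q=1$ since $x\neq0$, and write $[-1/x]_q=-q^{-1}f^{-1}$. Differentiating once reproduces the earlier formula $q^{-2}f^{-1}+q^{-1}f'f^{-2}$. Differentiating a second time by the product rule gives four terms:
\[
-2q^{-3}f^{-1}-q^{-2}f'f^{-2}-q^{-2}f'f^{-2}+q^{-1}f''f^{-2}-2q^{-1}(f')^2f^{-3}.
\]
Setting $q=1$, $f=x$, $f'=D_1(x)$ and $f''=D_2(x)$ yields
\[
-\frac2x-\frac{2D_1(x)}{x^2}+\frac{D_2(x)}{x^2}-\frac{2D_1(x)^2}{x^3},
\]
which is~\eqref{eq:D2S}.

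The computation involves no real obstacle. The only point needing care is the sign and coefficient bookkeeping in the second differentiation of $-q^{-1}f^{-1}$. The two middle terms $-q^{-2}f'f^{-2}$ arise from differentiating the $q^{-2}$ factor and the $f^{-1}$ factor in different summands, and they combine into the coefficient~$2$. I would double-check the result by testing it at $x=1$: there $[-1]_q=-q^{-1}$, so $D_2(-1)=-2$. On the right-hand side, $D_1(1)=0$ and $D_2(1)=0$ because $[1]_q=1$, which gives $-2$, as it should.
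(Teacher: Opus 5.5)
Your proposal is correct and is essentially the paper's proof: you differentiate $[x+1]_q=q[x]_q+1$ and $[-1/x]_q=-q^{-1}f^{-1}$ twice and evaluate at $q=1$, and your second-derivative expression agrees with the paper's once the two middle terms are combined. The only slip is cosmetic: you announce four terms but display five before combining them. Your consistency check at $x=1$ is a small addition that the paper does not include.
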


\begin{proof}
The first relation follows from
\[
\frac{d^2}{dq^2}\bigl(q[x]_q+1\bigr)
=2\frac{d}{dq}[x]_q+q\frac{d^2}{dq^2}[x]_q.
\]
For the second relation, differentiate twice the second recurrence~\eqref{RecEq}.  
If \(f(q)=[x]_q\), then
\[
\left[-\frac1x\right]_q=-q^{-1}f(q)^{-1},
\]
and therefore
\[
\frac{d^2}{dq^2}\!\left(-q^{-1}f^{-1}\right)
=-2q^{-3}f^{-1}-2q^{-2}f'f^{-2}
+q^{-1}f''f^{-2}-2q^{-1}(f')^2f^{-3}.
\]
Evaluation at \(q=1\) gives~\eqref{eq:D2S} immediately.
\end{proof}

The relations~\eqref{eq:D2T}--\eqref{eq:D2S}, 
together with $D_2(0)=0$, determine $D_2$ on $\Q$, 
since the transformations $T:x\mapsto x+1$ and $S:x\mapsto-1/x$ 
generate the transitive action of the modular group.

\subsection{The generalized Dedekind sum of weight four}\label{ApoSec}

Let
$$
B_1(t)=t-\frac12,
\qquad\qquad
B_3(t)=t^3-\frac32t^2+\frac12t
$$
be low-degree Bernoulli polynomials.
We use their periodic versions
$$
\Bbar_1(t)=
\begin{cases}
\{t\}-\frac12,&t\notin\Z,\\
0,&t\in\Z,
\end{cases}
\qquad
\Bbar_3(t)=B_3(\{t\}),
$$
where $\{t\}$ is the fractional part of $t$.
For coprime integers $a,b$ with $b>0$, define
\begin{equation}
s_3(a,b)
:=
\sum_{n=1}^{b-1}
\Bbar_1\!\left(\frac nb\right)
\Bbar_3\!\left(\frac{an}{b}\right).
\label{eq:s3def}
\end{equation}
This is Apostol's generalized Dedekind sum of weight $4$; see~\cite{Apostol}.  
It is also the sum denoted by $s_{1,3}(a,b)$ in~\cite{Las}.  
The finite sum in Lasker's Theorem~1.2 is equal to $-s_3(a,b)$.

We will use the following three properties:
\begin{align}
s_3(a+b,b)&=s_3(a,b),
\label{eq:speriod}
\\
s_3(-a,b)&=-s_3(a,b),
\label{eq:sodd}
\end{align}
and the reciprocity law
\begin{equation}
 a b^3 s_3(a,b)+b a^3 s_3(b,a)
=
-\frac{a^4-5a^2b^2+b^4+3}{120},
\label{eq:reciprocity}
\end{equation}
for positive coprime $a,b$.  Formula~\eqref{eq:reciprocity} is the case $m=3$ of 
Apostol's reciprocity formula for generalized Dedekind sums~\cite{Apostol}.

Combining~\eqref{eq:sodd} and~\eqref{eq:reciprocity} also gives
\begin{equation}
 s_3(-b,a)
=
\frac{b^2}{a^2}s_3(a,b)
+
\frac{a^4-5a^2b^2+b^4+3}{120a^3b}.
\label{eq:Sinversion}
\end{equation}

\begin{thm}
[\cite{Las}, Theorem 1.2] 
\label{thm:Lasker}
Let $x=a/b$ be reduced, with $b>0$.  Then
\begin{equation}
\label{eq:D2formula}
D_2\!\left(\frac ab\right)
=
\frac{x^3}{3}-x^2+\frac{5x}{3}-1
+\frac{1-x}{b^2}
-20s_3(a,b).
\end{equation}
\end{thm}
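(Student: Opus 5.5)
The proof follows the template of Theorem~\ref{FDerLem}. The relations \eqref{eq:D2T}--\eqref{eq:D2S}, together with $D_2(0)=0$, determine $D_2$ on all of $\Q$. So it suffices to show that the right-hand side of \eqref{eq:D2formula},
\[
G\!\left(\frac ab\right):=\frac{x^3}{3}-x^2+\frac{5x}{3}-1+\frac{1-x}{b^2}-20s_3(a,b),
\]
satisfies the same two relations, with $D_1$ replaced by the explicit formula \eqref{FDEq}, and that $G(0)=0$. For the initial value, $0=0/1$ gives $s_3(0,1)=0$, since the sum \eqref{eq:s3def} is empty, and then $G(0)=-1+1=0$.

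For the translation relation, $x+1=(a+b)/b$ keeps the denominator $b$, and $s_3(a+b,b)=s_3(a,b)$ by \eqref{eq:speriod}. Hence $G(x+1)-G(x)$ is a polynomial computation:
\[
\tfrac13\bigl((x+1)^3-x^3\bigr)-\bigl(2x+1\bigr)+\tfrac53-\tfrac1{b^2}=x^2-x+1-\tfrac1{b^2},
\]
which equals $2D_1(x)$ by \eqref{FDEq}. This verifies \eqref{eq:D2T}.

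The inversion relation \eqref{eq:D2S} is the main step. Take $x=a/b$ with $a\neq0$. The reduced form of $-1/x$ with positive denominator is $-b/a$ if $a>0$ and $b/(-a)$ if $a<0$.

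Treat $a>0$ first. Then $G(-1/x)$ contains $-20s_3(-b,a)$, which \eqref{eq:Sinversion} rewrites as
\[
-20\frac{b^2}{a^2}s_3(a,b)-\frac{a^4-5a^2b^2+b^4+3}{6a^3b}.
\]
The Dedekind-sum term $-20s_3(a,b)/x^2$ matches the $D_2(x)/x^2$ term on the right of \eqref{eq:D2S}, so all $s_3$ contributions cancel. What remains is a rational identity in $a,b$: substitute $x=a/b$, $D_1=\frac12(x^2-x+1-b^{-2})$, and compare. Here one must check that the term $(a^4-5a^2b^2+b^4+3)/(6a^3b)$ exactly absorbs the cross terms coming from $-2D_1^2/x^3$. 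In particular, the $b^{-4}$ part of $D_1^2$ must produce the constant $3$ in that numerator. Clearing denominators by $6a^3b^3$ reduces this to a finite polynomial check, which I expect to close.

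For $a<0$, use the oddness \eqref{eq:sodd}, $s_3(b,-a)=-s_3(-b,-a)$, to reduce to the reciprocity law \eqref{eq:reciprocity} applied to the positive pair $(-a,b)$. Equivalently, one can first note that \eqref{eq:Sinversion} remains valid for $a<0$ after tracking the signs. This yields the same polynomial identity.

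Finally, since $G$ and $D_2$ satisfy the same recurrences under the generators $T$ and $S$ of the transitive modular action and agree at $0$, they coincide on all of $\Q$.

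The main obstacle is the bookkeeping in the inversion step: getting the sign conventions of the reduced lift right in the case $a<0$, and verifying the final polynomial identity, where the exact constant $(a^4-5a^2b^2+b^4+3)/120$ of the reciprocity law is essential.
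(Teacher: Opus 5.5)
Your proposal is correct and follows the paper's proof essentially step for step. You verify \eqref{eq:D2T} via the periodicity \eqref{eq:speriod}, verify \eqref{eq:D2S} via \eqref{eq:Sinversion} (reducing $a<0$ to $|a|$ by oddness), check the value at $0$, and conclude by uniqueness; the remaining rational identity does close, since with $\beta=b^{-2}$ both sides reduce to $-\frac{x}{6}-1-\frac{5}{6x}+\frac{\beta-1}{x^2}-\frac{(1-\beta)^2}{2x^3}$, and the constant $3$ in the reciprocity numerator matches the $\beta^2$ part of $D_1^2$, as you anticipated.
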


\begin{proof}
Let the right-hand side of~\eqref{eq:D2formula} be denoted by $F(a/b)$.  
We prove that $F$ satisfies exactly the same recurrences as $D_2$.

First, $s_3(a+b,b)=s_3(a,b)$ and hence
\begin{align*}
F(x+1)-F(x)
&=
 x^2-x+1-\frac1{b^2}\\
&=2D_1(x),
\end{align*}
where we used~\eqref{FDEq}.  
Thus $F$ satisfies~\eqref{eq:D2T}.

It remains to check inversion. 
By the oddness relation~\eqref{eq:sodd}, the case \(a<0\) 
reduces to the same reciprocity identity with \(|a|\). 
 We therefore display the computation for \(a,b>0\).
Using~\eqref{eq:Sinversion}, with $x=a/b$, we obtain
\begin{align*}
F\!\left(-\frac1x\right)
={}&
P\!\left(-\frac1x\right)
+\frac{1+1/x}{a^2}
-20\frac{b^2}{a^2}s_3(a,b)\\
&\quad
-\frac{a^4-5a^2b^2+b^4+3}{6a^3b},
\end{align*}
where
$$
P(x):=\frac{x^3}{3}-x^2+\frac{5x}{3}-1.
$$
Substituting~\eqref{FDEq} and~\eqref{eq:D2formula} into the right-hand side of~\eqref{eq:D2S}, 
and simplifying, gives precisely this expression.  Therefore $F$ also satisfies~\eqref{eq:D2S}.

Finally,
$
F(0)=D_2(0),
$
since $0=0/1$ and $s_3(0,1)=0$.  The modular recurrences determine the function uniquely on $\Q$, hence $F=D_2$.
\end{proof}

\subsection{Expansion in $h=\log q$}

Write
\begin{equation}
[x]_{e^h}=x+A(x)h+B(x)h^2+O(h^3).
\label{eq:hExpansion}
\end{equation}
Since
\[
e^h-1=h+\frac12h^2+O(h^3),
\]
we have
\begin{equation}
A(x)=D_1(x),
\qquad
B(x)=\frac{D_2(x)+D_1(x)}2.
\label{eq:BhFromD}
\end{equation}
Combining~\eqref{FDEq} and~\eqref{eq:D2formula} yields the following formula.
\begin{align}
A(x)
&=
\frac12\left(x^2-x+1-\frac1{b^2}\right),
\label{eq:Aformula}\\
B(x)
&=
\frac{x^3}{6}-\frac{x^2}{4}+\frac{7x}{12}-\frac14
+\frac{1-2x}{4b^2}
-10s_3(a,b).
\label{eq:Bformula2}
\end{align}
These formulas will be useful below.

\section{Computing the first two coefficients in the $h = \log q$ expansion}\label{ComputSec}

In this section, we obtain several explicit formulas for the first two coefficients $\crr'$ and $\crr''$
in the $h = \log q$ expansion
and prove Theorems~\ref{FirstCFThm} and~\ref{SecondCFThm}, as well as Proposition~\ref{PProp}.
We assume throughout this section that the four points are finite and pairwise distinct.

\subsection{A coordinate formula for $\crr'$}

Differentiating the logarithm of the cross-ratio gives
\begin{align*}
\frac{\crr'(x_1,x_2;x_3,x_4)}{\crr(x_1,x_2;x_3,x_4)}
={}&
\frac{D_1(x_1)-D_1(x_3)}{x_1-x_3}
+
\frac{D_1(x_2)-D_1(x_4)}{x_2-x_4}
\\
&-
\frac{D_1(x_1)-D_1(x_4)}{x_1-x_4}
-
\frac{D_1(x_2)-D_1(x_3)}{x_2-x_3}.
\end{align*}
Write every rational number $x_i=a_i/b_i$, with $i=1,2,3,4$, in lowest terms
$$
x_i=\frac{a_i}{b_i},
\qquad
b_i>0,
\qquad
\gcd(a_i,b_i)=1
$$
and apply~\eqref{FDEq}.
The contributions of
$\frac{x^2-x+1}{2}$
cancel identically. 
Therefore only the denominator-dependent terms remain, and one obtains
\begin{equation}
 \label{CoorEq}
\begin{array}{rcl}
 \displaystyle\frac{\crr'(x_1,x_2;x_3,x_4)}{\crr(x_1,x_2;x_3,x_4)}
& =&
\displaystyle\frac12\bigg(
 \frac{b_3^{-2}-b_1^{-2}}{x_1-x_3}
 +\frac{b_4^{-2}-b_2^{-2}}{x_2-x_4}
\\[12pt]
& &\;\;\;
\displaystyle-\frac{b_4^{-2}-b_1^{-2}}{x_1-x_4}
 -\frac{b_3^{-2}-b_2^{-2}}{x_2-x_3}
 \bigg).
\end{array}
\end{equation}

Our next goal is to deduce~\eqref{CrT1} from~\eqref{CoorEq}.

\subsection{Proof of Theorem~\ref{FirstCFThm}}

Recall that we use the canonical primitive lift 
$
X_i=
\begin{pmatrix}
a_i\\
b_i
\end{pmatrix}.
$
Thus the Pl\"ucker determinants are explicitly given by
$$
\Delta_{ij}
=
\det(X_i,X_j)
=
a_i b_j-a_j b_i.
$$
In particular,
$$
x_i-x_j
=
\frac{a_i}{b_i}-\frac{a_j}{b_j}
=
\frac{\Delta_{ij}}{b_i b_j}.
$$
Every term in~\eqref{CoorEq} can thus be rewritten in the form
$$
\frac{\displaystyle b_j^{-2}-b_i^{-2}}
     {x_i-x_j}
=
\frac{\displaystyle \frac{b_i}{b_j}-\frac{b_j}{b_i}}{\Delta_{ij}}.
$$

Regrouping the first and the third terms in the right-hand side of~\eqref{CoorEq}, we have
$$
E_1:=\frac{\displaystyle \frac{b_1}{b_3}-\frac{b_3}{b_1}}{\Delta_{13}}-
\frac{\displaystyle \frac{b_1}{b_4}-\frac{b_4}{b_1}}{\Delta_{14}},
$$
and similarly for the second and the fourth terms, whose combination we denote by $E_2$.

\begin{lem}
\label{TeLem}
We have
\begin{equation}
\label{E1Eq}
E_1=\frac{
(\Delta_{14}-\Delta_{13})^2-\Delta_{34}^2}{
\Delta_{34}\Delta_{13}\Delta_{14}}
-
\frac{(b_3-b_4)^2}
     {b_3b_4\Delta_{34}},
\end{equation}
and similarly for $E_2$ obtained by $(1,3,4)\mapsto(2,4,3)$.
\end{lem}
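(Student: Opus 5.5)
The identity is purely algebraic. I would prove it by combining $E_1$ over a common denominator and then eliminating $b_1$ with a three-term linear relation among the second coordinates of the primitive lifts.

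\textbf{The linear relation.} The $3\times3$ determinant with rows $(a_i,b_i,b_i)$, $i=1,3,4$, vanishes because two of its columns are equal. Expanding it along the repeated column gives
\[
b_1\Delta_{34}=b_3\Delta_{14}-b_4\Delta_{13}.
\]
This is the ordinary three-term Pl\"ucker relation, paired with the linear functional ``second coordinate''. I would verify the sign convention by a direct expansion: $b_3(a_1b_4-a_4b_1)-b_4(a_1b_3-a_3b_1)=b_1(a_3b_4-a_4b_3)$.

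\textbf{Combining and simplifying.} Write
\[
E_1=\frac{b_1^2-b_3^2}{b_1b_3\Delta_{13}}-\frac{b_1^2-b_4^2}{b_1b_4\Delta_{14}}
\]
and put it over the denominator $b_1b_3b_4\Delta_{13}\Delta_{14}$. The numerator regroups as
\[
b_1^2\bigl(b_4\Delta_{14}-b_3\Delta_{13}\bigr)-b_3b_4\bigl(b_3\Delta_{14}-b_4\Delta_{13}\bigr).
\]
By the linear relation, the second bracket equals $b_1\Delta_{34}$. The factor $b_1$ then cancels, leaving
\[
E_1=\frac{b_1\bigl(b_4\Delta_{14}-b_3\Delta_{13}\bigr)-b_3b_4\Delta_{34}}{b_3b_4\Delta_{13}\Delta_{14}}.
\]
Next, multiply numerator and denominator by $\Delta_{34}$ and again replace $b_1\Delta_{34}$ by $b_3\Delta_{14}-b_4\Delta_{13}$. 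This removes $b_1$ completely, and the numerator becomes
\[
\bigl(b_3\Delta_{14}-b_4\Delta_{13}\bigr)\bigl(b_4\Delta_{14}-b_3\Delta_{13}\bigr)-b_3b_4\Delta_{34}^2.
\]

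\textbf{Expanding.} Expanding the product gives
\[
b_3b_4\bigl(\Delta_{14}^2+\Delta_{13}^2\bigr)-(b_3^2+b_4^2)\Delta_{13}\Delta_{14}.
\]
Adding and subtracting $2b_3b_4\Delta_{13}\Delta_{14}$, the numerator becomes
\[
b_3b_4\bigl[(\Delta_{14}-\Delta_{13})^2-\Delta_{34}^2\bigr]-(b_3-b_4)^2\Delta_{13}\Delta_{14}.
\]
Dividing by $b_3b_4\Delta_{13}\Delta_{14}\Delta_{34}$ yields exactly \eqref{E1Eq}.

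The statement for $E_2$ follows by the same computation under the relabeling $(1,3,4)\mapsto(2,4,3)$. The corresponding relation is $b_2\Delta_{43}=b_4\Delta_{23}-b_3\Delta_{24}$. Note that $\Delta_{43}=-\Delta_{34}$; this sign is what later makes $E_2$ enter \eqref{CrT1} with the opposite sign.

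\textbf{Main obstacle.} There is no conceptual difficulty. The only delicate point is the bookkeeping of signs in the linear relation and in the regrouping, and I would settle those by the explicit expansion above before carrying out the simplification.
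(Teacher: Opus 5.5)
Your proposal is correct and matches the paper's proof essentially step by step. You use the same three-term relation $b_1\Delta_{34}=b_3\Delta_{14}-b_4\Delta_{13}$, first to reach the intermediate form $\frac{b_1(b_4\Delta_{14}-b_3\Delta_{13})}{b_3b_4\Delta_{13}\Delta_{14}}-\frac{\Delta_{34}}{\Delta_{13}\Delta_{14}}$ and then to eliminate $b_1$, and you finish with the same completion of the square. The only differences are cosmetic: you derive the relation from a determinant with a repeated column, and you pass through a common denominator rather than regrouping the four terms directly.
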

\begin{proof}
Using the simple relation
$$
b_3\Delta_{14}-b_4\Delta_{13}=b_1\Delta_{34},
$$
which follows immediately from the definition of the determinants, we rewrite
\begin{align*}
E_1
&=
\frac{b_1}{b_3\Delta_{13}}-
\frac{b_3}{b_1\Delta_{13}}-
\frac{b_1}{b_4\Delta_{14}}+
\frac{b_4}{b_1\Delta_{14}}
\\
&=
\frac{b_1\bigl(b_4\Delta_{14}-b_3\Delta_{13}\bigr)}
     {b_3b_4\Delta_{13}\Delta_{14}}
+
\frac{b_4\Delta_{13}-b_3\Delta_{14}}
     {b_1\Delta_{13}\Delta_{14}}
\\
&=
\frac{b_1\bigl(b_4\Delta_{14}-b_3\Delta_{13}\bigr)}
     {b_3b_4\Delta_{13}\Delta_{14}}
-
\frac{\Delta_{34}}{\Delta_{13}\Delta_{14}}.
\end{align*}
Substituting
$$
b_1=\frac{b_3\Delta_{14}-b_4\Delta_{13}}{\Delta_{34}}
$$
in the first term gives
\begin{align*}
E_1
&=
\frac{
\bigl(b_3\Delta_{14}-b_4\Delta_{13}\bigr)
\bigl(b_4\Delta_{14}-b_3\Delta_{13}\bigr)}
{b_3b_4\Delta_{34}\Delta_{13}\Delta_{14}}
-
\frac{\Delta_{34}}{\Delta_{13}\Delta_{14}}.
\end{align*}
Now
\begin{align*}
\frac{
\bigl(b_3\Delta_{14}-b_4\Delta_{13}\bigr)
\bigl(b_4\Delta_{14}-b_3\Delta_{13}\bigr)}{b_3b_4}
&=
\Delta_{14}^2+\Delta_{13}^2
-
\frac{b_3^2+b_4^2}{b_3b_4}\,
\Delta_{13}\Delta_{14}
\\
&=
(\Delta_{14}-\Delta_{13})^2
-
\frac{(b_3-b_4)^2}{b_3b_4}\,
\Delta_{13}\Delta_{14}.
\end{align*}
Therefore
$$
E_1=
\frac{
(\Delta_{14}-\Delta_{13})^2-\Delta_{34}^2}
{\Delta_{34}\Delta_{13}\Delta_{14}}
-
\frac{(b_3-b_4)^2}{b_3b_4\Delta_{34}},
$$
which is~\eqref{E1Eq}.
\end{proof}

Adding the expressions for $E_1$ and $E_2$, the denominator-dependent terms cancel; 
since $\crr'/\crr=(E_1+E_2)/2$, this gives~\eqref{CrT1}.

\subsection{Proof of Proposition~\ref{PProp}}
It will suffice to give a pair of quadruples of points on~$\pP^1(\Q)$
for which all six Pl\"ucker determinants coincide, but $\crr''$
takes different values.

Consider the two ordered quadruples
$$
\mathcal X=
\left(\frac14,\frac34;\frac18,\frac38\right),
\qquad
\mathcal Y=
\left(\frac34,\frac54;\frac58,\frac78\right).
$$
Their normalized primitive lifts are respectively
$$
(1,4),\ (3,4),\ (1,8),\ (3,8),
\qquad
(3,4),\ (5,4),\ (5,8),\ (7,8).
$$
The second collection is obtained from the first by the determinant-one rational matrix
$$
A=
\begin{pmatrix}
1&\frac12\\
0&1
\end{pmatrix}
\in \mathrm{SL}(2,\Q).
$$
All resulting vectors remain primitive integral vectors.  Since
$\det A=1$, the six signed determinants coincide.  Explicitly, in both cases
one obtains
$$
(\Delta_{12},\Delta_{13},\Delta_{14},
  \Delta_{23},\Delta_{24},\Delta_{34})
=(-8,4,-4,20,12,-16).
$$

A direct computation from the continued-fraction definition of the
$q$-rationals gives
$$
\crr_q(\mathcal X)
=-\frac{q(q^2+q+1)}{q^5+q^4+q^2+q+1}
$$
and
$$
\crr_q(\mathcal Y)
=-\frac{(q^2-q+1)(q^2+q+1)(q^4-q^2+1)}{q^2(q^5+q^4+q^2+q+1)}.
$$
Consequently,
\begin{equation}
\label{eq:exact-difference}
\crr_q(\mathcal Y)-\crr_q(\mathcal X)
=
-\frac{(q-1)^2(q^2+q+1)(q^4+q^3+q^2+q+1)}
       {q^2(q^5+q^4+q^2+q+1)}.
\end{equation}
Substituting $q=e^h$ gives
\begin{align*}
\crr_{e^h}(\mathcal X)
&=-\frac35+\frac6{25}h+\frac{98}{125}h^2+O(h^3),\\
\crr_{e^h}(\mathcal Y)
&=-\frac35+\frac6{25}h-\frac{277}{125}h^2+O(h^3).
\end{align*}
The classical cross-ratios and the first coefficients therefore coincide,
whereas
\[
\crr''(\mathcal X)=\frac{196}{125},
\qquad
\crr''(\mathcal Y)=-\frac{554}{125}.
\]
This proves the claim.

\begin{cor}
The logarithmic second coefficient
\[
\frac12\left[
\frac{\crr''}{\crr}
-\left(\frac{\crr'}{\crr}\right)^2
\right]
\]
is likewise not determined by the six normalized Pl\"ucker determinants.
\end{cor}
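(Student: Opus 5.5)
The plan is to reduce the corollary to the data already computed in the proof of Proposition~\ref{PProp}, using the same pair of quadruples $\mathcal X$ and $\mathcal Y$. For both quadruples the six signed Pl\"ucker determinants coincide, namely $(-8,4,-4,20,12,-16)$. It therefore suffices to show that the logarithmic second coefficient takes different values on $\mathcal X$ and $\mathcal Y$.

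The key observation is that the other two ingredients of this expression already agree on the two quadruples. The $h$-expansions computed above show
\[
\crr(\mathcal X)=\crr(\mathcal Y)=-\tfrac35,
\qquad
\crr'(\mathcal X)=\crr'(\mathcal Y)=\tfrac6{25}.
\]
Hence the term $(\crr'/\crr)^2$ is the same for both. Since $\crr=-3/5$ is also common, the expression is an affine function of $\crr''$ with the nonzero slope $\tfrac12\cdot\tfrac{1}{\crr}=-\tfrac56$. Consequently the logarithmic second coefficients of $\mathcal X$ and $\mathcal Y$ differ by
\[
-\tfrac56\bigl(\crr''(\mathcal X)-\crr''(\mathcal Y)\bigr)
=-\tfrac56\cdot\tfrac{750}{125}=-5\neq0 .
\]

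Alternatively, one can read this off directly. The logarithmic second coefficient is the $h^2$-coefficient of $\log(\crr_{e^h}/\crr)$, and its difference between the two quadruples is
\[
\frac{1}{-3/5}\left(\tfrac{98}{125}-\left(-\tfrac{277}{125}\right)\right)=-5,
\]
because the constant and linear terms of the two expansions coincide. As a cross-check, the exact difference~\eqref{eq:exact-difference} vanishes to order exactly $2$ at $q=1$, which confirms that the $h^2$-coefficients differ.

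There is no real obstacle here. The only points that need care are that $\crr\neq0$ and that the first coefficients agree, and both follow from the explicit expansions already computed. If the claim is also wanted for the quantity $\Phi_{13}+\Phi_{24}-\Phi_{14}-\Phi_{23}$, it then follows from Theorem~\ref{SecondCFThm} once that theorem is proved.
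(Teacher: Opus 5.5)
Your argument is correct and follows the paper's approach. The corollary is stated without a separate proof and follows from the same pair $\mathcal X,\mathcal Y$: there $\crr=-\tfrac35$ and $\crr'=\tfrac{6}{25}$ agree while $\crr''$ differs. The logarithmic coefficient is affine in $\crr''$ with slope $1/(2\crr)\neq 0$, so it must differ too, and your value $-5$ for the difference checks out.
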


\begin{rem}
The obstruction is arithmetic rather than merely computational.  The
classification of ordered triples under $\PGL(2,\Z)$ shows that the three
absolute pairwise determinants do not always determine the orbit: in general
one must add a residue class
$$
\kappa\pmod{\gcd(|\Delta_{12}|,|\Delta_{13}|,|\Delta_{23}|)},
$$
see Theorem~\ref{ResClassThm}.
For the first three points of the two quadruples above, the determinant data
are the same, while the corresponding residues are $1$ and $3$ modulo $4$.
Thus the example detects precisely the sort of integral lattice information
that ordinary Pl\"ucker determinants forget.
\end{rem}

\subsection{Proof of Theorem~\ref{SecondCFThm}}

For $i\ne j$, formula~\eqref{eq:hExpansion} gives
\begin{align*}
\log\!\left(
  \frac{[x_i]_{e^h}-[x_j]_{e^h}}{x_i-x_j}
\right)
={}&
\frac{A(x_i)-A(x_j)}{x_i-x_j}\,h
\\
&+
\left(
  \frac{B(x_i)-B(x_j)}{x_i-x_j}
  -\frac12
  \left(
    \frac{A(x_i)-A(x_j)}{x_i-x_j}
  \right)^2
\right)h^2
+O(h^3).
\end{align*}
Here and below, all logarithms are formal logarithms of power
series with constant term~$1$.

By the definition of the cross-ratio,
\begin{align*}
&\log\!\left(
  \frac{\crr_{e^h}(x_1,x_2;x_3,x_4)}
       {\crr(x_1,x_2;x_3,x_4)}
\right)
\\
&\qquad={}
\log\!\left(
  \frac{[x_1]_{e^h}-[x_3]_{e^h}}{x_1-x_3}
\right)
+
\log\!\left(
  \frac{[x_2]_{e^h}-[x_4]_{e^h}}{x_2-x_4}
\right)
\\
&\qquad\phantom{={}}-
\log\!\left(
  \frac{[x_1]_{e^h}-[x_4]_{e^h}}{x_1-x_4}
\right)
-
\log\!\left(
  \frac{[x_2]_{e^h}-[x_3]_{e^h}}{x_2-x_3}
\right).
\end{align*}
The coefficient of $h^2$ in the left-hand side is
\[
\frac12\left[
  \frac{\crr''}{\crr}
  -\left(\frac{\crr'}{\crr}\right)^2
\right].
\]

Fix $i\ne j$.
From~\eqref{eq:Aformula},
\begin{equation}
\label{PairA}
\frac{A(x_i)-A(x_j)}{x_i-x_j}
=
\frac12\left(
x_i+x_j-1-\frac{b_i^{-2}-b_j^{-2}}{x_i-x_j}
\right).
\end{equation}
Formula~\eqref{eq:Bformula2} gives
\begin{equation}
\label{PairB}
\begin{array}{rcl}
\displaystyle
\frac{B(x_i)-B(x_j)}{x_i-x_j}
&=&
\displaystyle
\frac{(x_i+x_j)^2}{8}
+\frac{(x_i-x_j)^2}{24}
-\frac{x_i+x_j}{4}
+\frac{7}{12}
\\[10pt]
&&\displaystyle+
\frac{(1-(x_i+x_j))(b_i^{-2}-b_j^{-2})}{4(x_i-x_j)}
-\frac{b_i^{-2}+b_j^{-2}}{4}
-10\frac{s_i-s_j}{x_i-x_j}.
\end{array}
\end{equation}
Combining~\eqref{PairA} and~\eqref{PairB}, the mixed term cancels, and we obtain
\begin{equation}
\begin{array}{l}
\displaystyle
\frac{B(x_i)-B(x_j)}{x_i-x_j}
-
\frac12
\left(
\frac{A(x_i)-A(x_j)}{x_i-x_j}
\right)^2
\\[10pt]
\displaystyle
\hskip1cm\qquad=
\frac{(x_i-x_j)^2}{24}
-
\frac18
\left(
\frac{b_i^{-2}-b_j^{-2}}{x_i-x_j}
\right)^2
-10\frac{s_i-s_j}{x_i-x_j}
+
\frac{11}{24}
-
\frac{b_i^{-2}+b_j^{-2}}{4}
\\[10pt]
\displaystyle
\hskip1cm\qquad=
\Phi_{ij}
+
\frac{11}{24}
-
\frac{b_i^{-2}+b_j^{-2}}{4}.
\end{array}
\label{PairSecondOrder}
\end{equation}

Finally, take the alternating sum of~\eqref{PairSecondOrder} for the pairs
$(x_1,x_3)$, $(x_2,x_4)$, $(x_1,x_4)$, and $(x_2,x_3)$.  The constant terms cancel, and
each $b_i^{-2}$ also occurs once with each sign.  What remains is
\[
\frac12\left[
\frac{\crr''}{\crr}
-\left(\frac{\crr'}{\crr}\right)^2
\right]
=
\Phi_{13}+\Phi_{24}-\Phi_{14}-\Phi_{23},
\]
which is~\eqref{CrT2}.

\section{Extensions and further questions}\label{SecQuest}

\subsection{$q$-deformation of rational friezes}

A very natural problem that can be tackled with the help of the $q$-cross-ratio
is the $q$-deformation of tame closed friezes with
entries in~$\Q$.  In the classical theory, frieze entries are expressed in
terms of Pl\"ucker determinants, and the second nontrivial row consists of
cross-ratios; see~\eqref{ClassPluFr} and~\eqref{CREntriesEq}.  The formulas of
Section~\ref{FrizSec} therefore suggest using $q$-rational points and
$q$-cross-ratios to construct a corresponding deformation of rational
friezes.

A basic issue is the choice of a projective representative of the
rational polygon. The polygon associated with a classical rational
frieze is defined only up to $\PGL(2,\Q)$, whereas the map
$x\mapsto[x]_q$ is only modularly equivariant. Thus a rational
projective change of coordinates need not preserve the resulting
$q$-cross-ratios. This is different from the choice of numerator and
denominator for a fixed rational vertex, which does not affect
its $q$-deformation.

The question is therefore whether a suitable normalization can make
this construction depend only on the rational frieze, or whether
additional arithmetic data must be included. One must also determine
whether the resulting array satisfies a natural $q$-diamond rule and
whether closure and periodicity are preserved. We refer to the recent
preprint~\cite{IMK} on degree shifts between $q$-friezes and $q$-Farey
labelings, which may be related to this problem.

\subsection{Left and mixed $q$-cross-ratios}
\label{subsec:mixed-cross-ratios}

The definition~\eqref{Crq} uses the original, or right, $q$-rational.
However, it is also possible to use left $q$-rationals, see Section~\ref{LeftSec}.

Following~\cite{BBL,Jou,JPT} and for convenience, let us use the notation $[x]^{\sharp}_q=[x]_q$.
For a pattern
\[
 \boldsymbol\varepsilon=(\varepsilon_1,\varepsilon_2,
 \varepsilon_3,\varepsilon_4),
 \qquad \varepsilon_i\in\{\sharp,\flat\},
\]
define the corresponding mixed $q$-cross-ratio by
\begin{equation}
\label{MixedCrDefinition}
 \crr_q^{\boldsymbol\varepsilon}(x_1,x_2;x_3,x_4)
 :=
 \crr\bigl(
 [x_1]^{\varepsilon_1}_q,[x_2]^{\varepsilon_2}_q;
 [x_3]^{\varepsilon_3}_q,[x_4]^{\varepsilon_4}_q
 \bigr).
\end{equation}
In particular,
$\crr_q^{\sharp\sharp\sharp\sharp}=\crr_q$.
Let $\overline{\boldsymbol\varepsilon}$ be the complementary pattern,
obtained by interchanging $\sharp$ and $\flat$ in every position.

\begin{prop}
\label{prop:mixed-duality}
For every pattern $\boldsymbol\varepsilon\in\{\sharp,\flat\}^4$,
\begin{equation}
\label{MixedDuality}
 \crr_q^{\boldsymbol\varepsilon}
 =
 \crr_{q^{-1}}^{\overline{\boldsymbol\varepsilon}}.
\end{equation}
In particular,
\[
 \crr_q^{\flat\flat\flat\flat}=\crr_{q^{-1}}.
\]
\end{prop}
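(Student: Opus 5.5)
The plan is to use Thomas's transition map $g_q$ from~\eqref{TransitionMap} together with the relations~\eqref{LeftRightTransition}. The key observation is that $g_q$ is a single fractional-linear transformation over $\Q(q)$. Concretely, it is represented by the matrix
\[
G_q=\begin{pmatrix} q & 1-q\\ q-1 & 1\end{pmatrix},
\qquad \det G_q = q+(q-1)^2 = q^2-q+1\neq0 .
\]
Since $G_q\in\PGL(2,\Q(q))$, the classical cross-ratio is invariant under applying $g_q$ simultaneously to all four arguments. This invariance holds for arbitrary points of $\pP^1(\Q(q))$, including the point $\infty$, which is handled by the homogeneous formula~\eqref{ClassEq}.

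The key step is to read~\eqref{LeftRightTransition} uniformly as
\[
g_q\bigl([x]^{\varepsilon}_q\bigr)=[x]^{\bar\varepsilon}_{q^{-1}},
\qquad \varepsilon\in\{\sharp,\flat\},
\]
where $\bar\varepsilon$ denotes the opposite symbol. For $\varepsilon=\sharp$ this is the first identity in~\eqref{LeftRightTransition}, and for $\varepsilon=\flat$ it is the second. Applying $g_q$ to each of the four entries in~\eqref{MixedCrDefinition} then gives
\[
\crr_q^{\boldsymbol\varepsilon}(x_1,x_2;x_3,x_4)
=\crr\bigl(g_q[x_1]^{\varepsilon_1}_q,\ldots,g_q[x_4]^{\varepsilon_4}_q\bigr)
=\crr\bigl([x_1]^{\bar\varepsilon_1}_{q^{-1}},\ldots,[x_4]^{\bar\varepsilon_4}_{q^{-1}}\bigr),
\]
and the right-hand side is exactly $\crr_{q^{-1}}^{\overline{\boldsymbol\varepsilon}}$. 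Taking $\boldsymbol\varepsilon=\flat\flat\flat\flat$, so that $\overline{\boldsymbol\varepsilon}=\sharp\sharp\sharp\sharp$, yields the special case $\crr_q^{\flat\flat\flat\flat}=\crr_{q^{-1}}$.

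The only points needing care are the following. First, the transition identities~\eqref{LeftRightTransition} must hold on all of $\pP^1(\Q)$, including $x=\infty$. There we check directly: $g_q(\infty)=q/(q-1)=1/(1-q^{-1})=[\infty]^\flat_{q^{-1}}$, and $g_q\bigl(1/(1-q)\bigr)=\infty=[\infty]_{q^{-1}}$, because the denominator $1+(q-1)/(1-q)$ vanishes. Second, the mixed cross-ratio must be well defined, meaning the four mixed $q$-points are pairwise distinct in $\pP^1(\Q(q))$. This can be verified by specializing $q\to1$, where each of them reduces to the corresponding distinct $x_i$. Both sides of~\eqref{MixedDuality} are then well defined simultaneously, since $g_q$ is a bijection of $\pP^1(\Q(q))$.

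I expect the main (mild) obstacle to be confirming that~\eqref{LeftRightTransition} is stated in a form covering both directions and the point at infinity. If needed, I would reprove it by checking that $g_q$ intertwines the $q$-deformed generators: $g_q\circ T_q=T_{q^{-1}}\circ g_q$ and $g_q\circ S_q=S_{q^{-1}}\circ g_q$, verified as $2\times2$ matrix identities up to scalars. Beyond that, I would check the base values $g_q(0)=(1-q)/1=[0]^\flat_{q^{-1}}$ and $g_q\bigl((q-1)/q\bigr)=0=[0]_{q^{-1}}$, and then conclude by the uniqueness of equivariant maps with prescribed value at~$0$.
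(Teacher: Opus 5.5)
Your proposal is correct and follows the paper's proof exactly: apply Thomas's transition map $g_q$ from~\eqref{TransitionMap} simultaneously to all four entries, use the projective invariance of the ordinary cross-ratio, and invoke~\eqref{LeftRightTransition} to swap every $\sharp/\flat$ label while replacing $q$ by $q^{-1}$. Your additional checks are correct and make explicit what the paper leaves implicit: the determinant $q^2-q+1\neq0$, the values at $\infty$ and $0$, and the intertwining relations $g_q\circ T_q=T_{q^{-1}}\circ g_q$ and $g_q\circ S_q=S_{q^{-1}}\circ g_q$.
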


\begin{proof}
Apply the same M\"obius transformation~\eqref{TransitionMap} to all four
entries in~\eqref{MixedCrDefinition}.  The ordinary cross-ratio is
unchanged, while~\eqref{LeftRightTransition} complements every right--left
label and replaces $q$ by $q^{-1}$.
\end{proof}

There are $2^4=16$ patterns.  Complementation has no fixed pattern, so
these patterns form eight pairs under~\eqref{MixedDuality}.  For instance, one pair is
$
 \sharp\sharp\sharp\sharp
 \quad\longleftrightarrow\quad
 \flat\flat\flat\flat.
$
The remaining fourteen patterns form seven mixed duality classes.  A
convenient list of representatives is
\begin{equation}
\label{SevenMixedPatterns}
\begin{gathered}
 \flat\sharp\sharp\sharp,\quad
 \sharp\flat\sharp\sharp,\quad
 \sharp\sharp\flat\sharp,\quad
 \sharp\sharp\sharp\flat,\\
 \flat\flat\sharp\sharp,\quad
 \flat\sharp\flat\sharp,\quad
 \flat\sharp\sharp\flat.
\end{gathered}
\end{equation}
Here ``seven'' counts pattern classes; it is not a statement of algebraic
independence.  If one also identifies patterns under the Klein four-group,
the seven mixed classes reduce to four types, represented by
\[
 \flat\sharp\sharp\sharp,
 \qquad
 \flat\flat\sharp\sharp,
 \qquad
 \flat\sharp\flat\sharp,
 \qquad
 \flat\sharp\sharp\flat.
\]
Every mixed cross-ratio is invariant under the simultaneous
$\PSL(2,\Z)$-action.  Under the $q$-preserving extension of the action to
$\PGL(2,\Z)$, a determinant-$(-1)$ element exchanges every $\sharp$ with
$\flat$; see~\cite{Jou,JPT}.  Thus complementary pairs of patterns are the
natural objects for the full modular group.

\begin{rem}
The mixed cross-ratios genuinely contain more information than the pure
ones.  For the family
\[
 X_n=\left(-n,-\frac1n;1,-1\right),
 \qquad n\geq2,
\]
Proposition~\ref{prop:infinite-klein-fiber} gives
$\crr_q(X_n)=-q$ for every~$n$.  On the other hand,
$[-1]^{\flat}_q=-q^{-2}$, and direct substitution gives
\begin{equation}
\label{MixedXn}
 \crr_q^{\sharp\sharp\sharp\flat}(X_n)
 =
 \frac{q\bigl(q-[n]_q\bigr)}
      {(q^2-q+1)[n]_q-1}.
\end{equation}
This expression determines $[n]_q$ as a rational function of $q$ and hence~$n$; indeed, if its value is
$C$, then
\[
 [n]_q=\frac{C+q^2}{(q^2-q+1)C+q}.
\]
Thus mixed cross-ratios separate the infinite family that is invisible to
the two pure versions.
\end{rem}

Nevertheless, even the complete mixed family is not a complete modular
invariant.

\begin{ex}
\label{ex:mixed-nonseparation}
Consider the ordered quadruples
$$
 \mathbf x=
 \left(\infty,\frac13,\frac43,\frac{10}{3}\right),
 \qquad
 \mathbf y=
 \left(\infty,\frac23,\frac53,\frac{11}{3}\right).
$$
Then it can be checked by a direct computation that
$$
 \crr_q^{\boldsymbol\varepsilon}(\mathbf x)
 =
 \crr_q^{\boldsymbol\varepsilon}(\mathbf y)
 \qquad
 \text{for every }\boldsymbol\varepsilon\in\{\sharp,\flat\}^4.
$$
Indeed, the same affine map
\[
 H_q(z)=\frac{(1+q^2+q^3)z+q}{1+q+q^3}
\]
sends $[x_i]^\sharp_q$ to $[y_i]^\sharp_q$ and $[x_i]^\flat_q$ to $[y_i]^\flat_q$, for every $i$.
However, $\mathbf x$ and $\mathbf y$ are not equivalent under the joint
action of $\PGL(2,\Z)$ and~$S_4$.
Indeed, the unique projective transformation sending $\mathbf x$ to $\mathbf y$ and preserving
their Pl\"ucker invariants is represented by the matrix
\[
 \begin{pmatrix}3&1\\0&3\end{pmatrix},
\]
whose determinant is~$9$ rather than~$\pm1$.  It does not belong to
$\PGL(2,\Z)$.
\end{ex}

The mixed versions are strictly stronger than the pure $q$-cross-ratio, but still fail to distinguish some modular orbits. 
This leads naturally to the symmetric classification problem considered next.

\subsection{Toward a symmetric classification}

Proposition~\ref{prop:infinite-klein-fiber} shows that the original
$q$-cross-ratio is far from being a complete invariant: a single fiber
contains infinitely many equivalence classes even after quotienting by the
Klein four-group.  Since the equality in~\eqref{eq:infinite-klein-fiber} is
an identity of rational functions in~$q$, the entire sequence
\[
 \crr,\crr',\crr'',\ldots
\]
also fails to separate these classes.  The mixed cross-ratios introduced in
Section~\ref{subsec:mixed-cross-ratios} contain strictly more information,
but Example~\ref{ex:mixed-nonseparation} shows that even their complete
labelled collection does not separate modular orbits.  This does not
contradict Theorem~\ref{InvThm}, which concerns algebraic independence of
the invariant functions as the quadruple varies.

Appendix~\ref{PairTripleApp} gives an elementary, although asymmetric,
complete classification of ordered quadruples modulo $\PGL(2,\Z)$.  One
first puts the ordered triple $(x_1,x_2,x_3)$ into the unique normal form
\[
 \left(\infty,\frac{u}{n},\frac{r}{m}\right)
\]
of Theorem~\ref{NFTThm}.  If $z$ is the image of the fourth point, then
formula~\eqref{eq:reconstruct-classical-fourth} reconstructs $z$ from the
classical cross-ratio
\[
 \lambda=\crr\left(\infty,\frac{u}{n};\frac{r}{m},z\right).
\]
Thus the normal-form data $(n,m,u,r)$ of the first triple together with the
classical cross-ratio form a complete invariant of ordered quadruples.  For
the Klein quotient, one obtains a complete but indirect invariant by taking
the corresponding $V_4$-orbit of this data.

This leads to a more natural classification problem.

\medskip
\noindent\textbf{Problem.}
Find a direct complete invariant of rational quadruples for the joint action
of $\PGL(2,\Z)$ and~$V_4$ that does not privilege an ordered triple.  In
particular, determine what finite residue data must be added to the
$V_4$-orbit of the six absolute normalized Pl\"ucker determinants in order
to obtain a complete invariant.
\medskip

The finiteness statement in Proposition~\ref{prop:fixed-plucker-finite}
shows that, once the six labelled absolute Pl\"ucker determinants are fixed,
only finitely many $\PGL(2,\Z)$-orbits remain.  The problem is therefore not
to find further continuous parameters, but to describe the missing finite
arithmetic information in a symmetric and effective way.  The
$q$-cross-ratio may still encode useful parts of this information, but it
cannot by itself serve as a classifier.

\subsection{Formal extension to real points}

For $x\in\R$, the $q$-deformation $[x]_q$ is defined as a formal Laurent
series at $q=0$; for irrational $x$, see~\cite{MGOexp}.
A version of complex $q$-deformed numbers was suggested by Etingof~\cite{Eti}.
Formula~\eqref{Crq} therefore defines a formal $q$-cross-ratio for pairwise distinct real, or complex points. 
For complex points in the upper half-plane we use Etingof's analytic construction, 
discussed in the next subsection. 
The $\PSL(2,\Z)$-equivariance of $q$-real and complex numbers implies the
invariance of this formal cross-ratio.

Two points require additional care in this setting.
First, a formal expansion at $q=0$ need not admit an analytic or formal
expansion at $q=1$, so the coefficients $\crr',\crr'',\ldots$ considered in
this paper are not automatically defined
(see however \cite{Eti,EO} for analytic properties of $q$-reals).
It would be interesting to
identify natural classes of real points for which the $q$-cross-ratio admits
continuation to a neighborhood of $q=1$. 
Second, the determinant-$(-1)$
covariance law uses the substitution $q\mapsto q^{-1}$, which is generally
not defined on a one-sided Laurent series.
Jouteur's $q$-preserving $\PGL(2,\Z)$ action~\cite{Jou} may provide a way around this obstruction; 
its effect on the present cross-ratio remains to be worked out.
It would be interesting to determine which parts of the
$\PGL(2,\Z)$ covariance survive there.

\subsection{Complex arguments and the Eisenstein series $E_4$}
\label{subsec:complex-eisenstein}

Etingof's complex $q$-numbers \cite[Section~9]{Eti}
provide an analytic counterpart of the construction considered in this
paper. 

For four distinct points $\tau_1,\ldots,\tau_4$ in the upper
half-plane $\mathbb H$, put
\[
 C_h=\operatorname{cr}\bigl([\tau_1]_{e^h},[\tau_2]_{e^h};
 [\tau_3]_{e^h},[\tau_4]_{e^h}\bigr).
\]
We work with the germ at $h=0$. 
We also put $C:=C_0=\crr(\tau_1,\tau_2;\tau_3,\tau_4)$,
where $[\tau]_{e^0}=\tau$.
Define
\begin{equation}\label{eq:complex-polarized-eisenstein}
 \mathcal E_4(\tau_1,\tau_2,\tau_3,\tau_4)
 =\frac12\sum_{\substack{(c,d)\in\mathbb Z^2\\\gcd(c,d)=1}}
 \frac{1}{\prod_{j=1}^4(c\tau_j+d)}.
\end{equation}
This series converges absolutely and locally uniformly on $\mathbb H^4$.
It is symmetric, and on the diagonal equals the normalized Eisenstein
series:
\[
 \mathcal E_4(\tau,\tau,\tau,\tau)=E_4(\tau),
 \qquad E_4(\tau)=1+240e^{2\pi i\tau}+\cdots.
\]

\begin{prop}
The germ $C_h$ is even in $h$, and
\begin{equation}
\label{eq:CCR}
 C_h=C\left(1-\frac{h^2}{12}
 (\tau_1-\tau_2)(\tau_3-\tau_4)
 \mathcal E_4(\tau_1,\tau_2,\tau_3,\tau_4)+O(h^4)\right).
\end{equation}
In particular, the first derivative with respect to $q$ at $q=1$ vanishes.
\end{prop}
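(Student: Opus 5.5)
The plan is to handle evenness by a Möbius-invariance argument, and the $h^2$ coefficient by a divided-difference computation modelled on Sections~\ref{TDerSec}--\ref{ComputSec}.

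\emph{Evenness.} For points of $\mathbb H$ the left and right deformations coincide in Etingof's construction: the two fixed-point prescriptions of Section~\ref{LeftSec} differ only at cusps. I therefore expect the transition relation~\eqref{LeftRightTransition} to become
\[
g_q\bigl([\tau]_q\bigr)=[\tau]_{q^{-1}},\qquad \tau\in\mathbb H,
\]
where $g_q$ is the M\"obius map~\eqref{TransitionMap}. I will check this by verifying that $\tau\mapsto g_q^{-1}([\tau]_{q^{-1}})$ satisfies the recurrences~\eqref{RecEq} and the normalization/uniqueness properties in~\cite{Eti}. Applying $g_q$ to all four entries leaves $\crr$ unchanged, so $C_h=C_{-h}$. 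This also gives the vanishing of the first $q$-derivative at $q=1$, since $d/dq=d/dh$ there.

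\emph{Second-order coefficient.} Write
\[
[\tau]_{e^h}=\tau+A(\tau)h+B(\tau)h^2+O(h^3).
\]
Differentiating~\eqref{RecEq}, exactly as for $D_1,D_2$, gives functional equations for $A$ and $B$ under $T$ and $S$.

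For $A$, the polynomial $A_0(\tau)=(\tau^2-\tau+1)/2$ is a particular solution, and the difference $A-A_0$ is a holomorphic weight $-2$ invariant. I would argue that it vanishes, either from the growth of $[\tau]_q$ as $\operatorname{Im}\tau\to\infty$ or directly from $C_h$ being even. Its contribution to $\log C_h$ then cancels, as in the passage to~\eqref{CoorEq}.

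For $B$, I expect $B=P(\tau)+G(\tau)$. Here $P$ is the cubic polynomial of~\eqref{eq:Bformula2} without its $b$-terms. $G$ is $1$-periodic and transforms under $S$ with weight $-2$ up to a cubic period polynomial; this is the analytic replacement of $-10\,s_3$, via Apostol's reciprocity~\eqref{eq:reciprocity}. The natural candidate is an Eichler integral of $E_4$, given by a Poincar\'e-type series
\[
G(\tau)=\kappa\sum_{\gcd(c,d)=1,\ c\neq0}\frac{1}{c^{3}(c\tau+d)},
\]
suitably regularized, with the constant $\kappa$ fixed by matching the period polynomial. Uniqueness again follows because a holomorphic solution of the homogeneous equations with the right growth is zero.

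Next I repeat the computation of~\eqref{PairSecondOrder}. The polynomial parts of the four pair terms cancel in the alternating sum, exactly as for $\Phi_{ij}$, and the $(A/\cdot)^2$ term contributes only polynomial pieces. What remains is
\[
\sum \pm\,[\tau_i,\tau_j]G=-(\tau_1-\tau_2)(\tau_3-\tau_4)\,[\tau_1,\tau_2,\tau_3,\tau_4]G,
\]
an elementary identity for first divided differences over the pairs $(13),(24),(14),(23)$. Finally, for each term of $G$ one has
\[
[\tau_1,\ldots,\tau_4]\frac{1}{c\tau+d}=-\frac{c^3}{\prod_j(c\tau_j+d)}.
\]
Summing over $(c,d)$, and noting that the $c=0$ terms give the leading $1$ in $E_4$, produces $\mathcal E_4$ with the factor $-1/12$ after normalizing $\kappa$.

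The main obstacle is the identification of $B$. I need to justify rigorously, from Etingof's analytic definition rather than just formally, that $B-P$ is exactly this Eichler integral with the correct constant. This requires controlling convergence (only conditional for weight $-2$) and the growth needed to kill homogeneous solutions. I would first try to pin down $\kappa$ by comparing the $S$-period polynomial with~\eqref{eq:Sinversion}.
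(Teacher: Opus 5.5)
Your plan has a genuine gap. The same ingredient is missing in both halves: you never use the analytic property that actually characterizes Etingof's $[\tau]_q$ on $\mathbb H$. This is his equation (9.3). Via the Schwarzian chain rule and $J'^2/(J(J-1))=-4\pi^2E_4$, it says that $f_h(\tau)=[\tau]_{e^h}$ is the map equivariant for $T_q,S_q$ with $\mathcal S_\tau(f_h)=-\tfrac{h^2}{2}E_4(\tau)$. The recurrences alone do not pin down a function on $\mathbb H$.

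For evenness, your identity $g_q([\tau]_q)=[\tau]_{q^{-1}}$ is in fact true, since $g_q$ intertwines $T_q,S_q$ with $T_{q^{-1}},S_{q^{-1}}$. But the ``uniqueness property'' you would invoke to prove it is exactly that both sides are equivariant solutions of the same Schwarzian equation, i.e.\ that the right-hand side above depends only on $h^2$. That observation is the paper's entire proof of evenness, and it needs no $g_q$.

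For the $h^2$ coefficient, differentiating the recurrences determines $B$ only up to a holomorphic weight $-2$ invariant $\beta$, and such functions exist (e.g.\ $E_4E_6/\eta^{24}$). Passing to third divided differences does not remove them: by Bol's identity $\beta'''$ is then a nonzero weight-$4$ invariant with no constant term, so it would change the answer. Excluding $\beta$ requires growth bounds at the cusp for the Taylor coefficients of $[\tau]_{e^h}$, which you do not supply, and matching periods only fixes $\kappa$ once you already know $B-P$ has that form.

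The Schwarzian equation bypasses all of this. It gives $A'''=0$ and $B'''=\tfrac32-\tfrac12E_4$ (equivalently $v'''=-\tfrac12E_4$ after removing the linear term by a M\"obius map). The $h^2$-coefficient of $\log(C_h/C)$ involves $B$ only through its third divided difference, so termwise triple integration of the lattice sum for $E_4$ finishes the proof. The quadratic integration constants, and with them the regularization of your Eichler integral, drop out.

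Your final assembly also contains two slips that would spoil the constant. First, the identity is $\sum\pm[\tau_i,\tau_j]F=+(\tau_1-\tau_2)(\tau_3-\tau_4)[\tau_1,\tau_2,\tau_3,\tau_4]F$, with a plus sign; test it on $F=\tau^3$. Second, the polynomial parts do not cancel, and neither do the terms $(x_i-x_j)^2/24$ in $\Phi_{ij}$. The cubic $P$ contributes $\tfrac16$ and the squared $A_0$-terms contribute $-\tfrac14$, each times $(\tau_1-\tau_2)(\tau_3-\tau_4)$. Their sum $-\tfrac1{12}(\tau_1-\tau_2)(\tau_3-\tau_4)$ is exactly the constant term $1$ of $\mathcal E_4$, and your $G$ has no $c=0$ terms that could produce it. Finally, $\kappa$ is not free to normalize: $G'''=\tfrac12(1-E_4)$ forces $\kappa=\tfrac1{24}$. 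With the corrected sign this yields $-\tfrac1{12}(\tau_1-\tau_2)(\tau_3-\tau_4)\mathcal E_4$ exactly.
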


\begin{proof}
Let $f_h(\tau)=[\tau]_{e^h}$, and write
$\mathcal S_\tau(f)=f'''/f'-\tfrac32(f''/f')^2$ for the Schwarzian
derivative with respect to $\tau$.
Etingof's~\cite{Eti} equation~(9.3), with $J=j/1728$, and the Schwarzian chain rule
give
\[
 \mathcal S_\tau(f_h)
 =\frac{h^2}{8\pi^2}\frac{J'(\tau)^2}{J(\tau)(J(\tau)-1)}
 =-\frac{h^2}{2}E_4(\tau).
\]
Thus $f_h$ and $f_{-h}$ have the same Schwarzian and differ by a
M\"obius transformation, proving $C_h=C_{-h}$.

The linear coefficient of $f_h$ has zero third derivative, so it can be
removed by M\"obius postcomposition. The normalized family then has the
form $\tau+h^2v(\tau)+O(h^3)$, where $v'''=-E_4/2$.
Expanding the cross-ratio gives
\[
 \frac{C_h}{C}
 =1+h^2(\tau_1-\tau_2)(\tau_3-\tau_4)
 \sum_{i=1}^4\frac{v(\tau_i)}{\prod_{j\ne i}(\tau_i-\tau_j)}
 +O(h^3).
\]
Termwise triple integration of the primitive lattice expansion of $E_4$
yields
\[
 \sum_{i=1}^4\frac{v(\tau_i)}{\prod_{j\ne i}(\tau_i-\tau_j)}
 =-\frac1{12}\mathcal E_4(\tau_1,\tau_2,\tau_3,\tau_4),
\]
implying~\eqref{eq:CCR}.
Indeed, for $c\ne0$ a triple primitive of
$-\tfrac12(c\tau+d)^{-4}$ is $1/(12c^3(c\tau+d))$, whose third divided
difference is $-1/(12\prod_j(c\tau_j+d))$; the $c=0$ terms give the same
identity. Integration constants are quadratic polynomials and disappear
from the divided difference. Normal convergence justifies this computation
after fixing the three integration constants at a basepoint.
Evenness improves the remainder to $O(h^4)$.
\end{proof}

This analytic construction is also related to mixed cross-ratios.
Fix $0<q<1$ and choose $M>0$ sufficiently large as
in~\cite[Section~9.2]{Eti}.
For $x\in\Q$, choose $g\in\PSL(2,\Z)$ with $g(\infty)=x$.
A sequence $\tau_n\in\mathbb H$ approaching $x$ is $M$-bounded if
\[
\operatorname{Im}(g^{-1}\tau_n)\ge M
\qquad\text{for all }n.
\]
It approaches from the right (respectively, left) if
$\operatorname{Re}(g^{-1}\tau_n)\to-\infty$
(respectively, $+\infty$); see~\cite[Definition~9.8]{Eti}.
Under these hypotheses,~\cite[Proposition~9.10]{Eti} gives
\[
[\tau_n]_q\longrightarrow
\begin{cases}
[x]_q, & \text{for a right approach},\\
[x]^\flat_q, & \text{for a left approach},
\end{cases}
\]
see also~\cite[Remark 3.2]{MGOexp}.
Taking such limits independently in the four arguments produces
the corresponding mixed cross-ratios, whenever the limiting
denominator is nonzero.

The expansion above concerns fixed interior points; it does not
justify interchanging differentiation at $q=1$ with passage to the
rational boundary. In particular, its vanishing first variation does
not contradict the generally nonzero first variation for rational
arguments.

\appendix
\section{Pairs, triples, and ordered quadruples of rational points}
\label{PairTripleApp}

We describe here $\PGL(2,\Z)$-orbits of ordered pairs, triples,
and quadruples of distinct points of~$\pP^1(\Q)$.
Pairs of points were classified by Jones, Singerman, and Wicks~\cite{JSW};
we recall the corresponding classification.
We then give a classification of triples
and, as its immediate consequence,
of ordered quadruples.

\subsection{Pairs}

Jones, Singerman, and Wicks~\cite{JSW} describe the
$\PSL(2,\Z)$-orbits of ordered pairs: every non-diagonal orbit contains
$(\infty,u/n)$, where $u$ is a unit modulo $n$.
Passing to $\PGL(2,\Z)$ adds the reflection $x\mapsto-x$ and gives the
following form of their result.

\begin{prop}
Every ordered pair of distinct points of $\pP^1(\Q)$ is
$\PGL(2,\Z)$-equivalent to
$$
\left(\infty,\frac{u}{n}\right),
\qquad
n\geq1,\quad 0\leq u<n,\quad \gcd(u,n)=1.
$$
Two such pairs, with parameters $(n,u)$ and $(n',u')$, are equivalent
if and only if
$$
n=n',\qquad u'\equiv \pm u\pmod n.
$$
\end{prop}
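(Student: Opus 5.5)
The plan has two parts: existence of the normal form and the equivalence criterion.

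\emph{Existence.} Let $(x,y)$ be a pair of distinct points. Since $\PSL(2,\Z)$ acts transitively on $\pP^1(\Q)$, choose $g$ with $g(x)=\infty$, and write $g(y)=a/b$ in lowest terms with $b>0$. The invariant
\[
|\Delta|=|\det(X_\infty,(a,b)^T)|=b
\]
is the $\PGL(2,\Z)$-invariant $|\Delta_{12}|$ of the pair, so we set $n:=b\geq1$. The stabilizer of $\infty$ in $\PSL(2,\Z)$ contains the translations $T^k$. They send $a/n$ to $(a+kn)/n$, so we may reduce $a$ modulo $n$ and obtain $0\leq u<n$ with $\gcd(u,n)=1$. (For $n=1$ this gives $u=0$, i.e.\ the pair $(\infty,0)$.)

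\emph{Invariance of $(n,\pm u)$.} Suppose $M\in\PGL(2,\Z)$ sends $(\infty,u/n)$ to $(\infty,u'/n')$. First, $n=n'$, because $|\Delta_{12}|$ of the primitive lifts is a $\PGL(2,\Z)$-invariant. Second, $M$ fixes $\infty$, so the matrix is upper triangular with diagonal entries $\pm1$. Up to an overall sign, $M$ is therefore $x\mapsto \pm x+k$ with $k\in\Z$. Such a map sends $u/n$ to $(\pm u+kn)/n$, hence $u'\equiv\pm u\pmod n$. Conversely, if $u'\equiv \pm u\pmod n$, then the map $x\mapsto \pm x+k$ with the appropriate integer $k$ realizes the equivalence; in the minus case it has determinant $-1$, which is allowed in $\PGL(2,\Z)$.

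There is no real obstacle here. The only points needing care are that the lift $(u,n)$ stays primitive with positive denominator after each map, and that the stabilizer of $\infty$ in $\PGL(2,\Z)$ is exactly $\{x\mapsto\pm x+k\}$. Both follow directly from the unimodularity of integer upper-triangular matrices.
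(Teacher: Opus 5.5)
Your proof is correct and follows the paper's argument: send the first point to $\infty$, reduce the numerator by integral translations, and use that the stabilizer of $\infty$ in $\PGL(2,\Z)$ is $\{x\mapsto\pm x+k\}$. Your appeal to the invariance of $|\Delta_{12}|$ to get $n=n'$ is valid but redundant, since the stabilizer description already shows that the reduced denominator is preserved.
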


\begin{proof}
Send the first point to $\infty$ and write the second one as $u/n$ in
lowest terms, with $n>0$.  The stabilizer of $\infty$ in
$\PGL(2,\Z)$ consists of the transformations
$x\mapsto\varepsilon x+k$, where $\varepsilon=\pm1$ and $k\in\Z$.
Thus $n$ is unchanged and $u$ is determined modulo $n$ up to sign.
\end{proof}

Note that the absolute Pl\"ucker invariant of the pair of points $\left(\infty,\frac{u}{n}\right)$
equals~$n$.
It is constant on each $\PGL(2,\Z)$-orbit.

\begin{rem}
For an unordered pair one must also identify $u$ with $u^{-1}$ modulo
$n$: interchanging the two points sends the $\PSL(2,\Z)$-parameter
$u$ to $-u^{-1}$, as observed in~\cite{JSW}, and the minus sign is
already invisible for the $\PGL(2,\Z)$-action.
\end{rem}

\subsection{Triples: classification by normal form}

For an ordered triple $(x_1,x_2,x_3)$ of distinct points of $\pP^1(\Q)$,
the three absolute Pl\"ucker invariants
\begin{equation}
\label{PiT}
\left|\Delta_{12}\right|,
\quad\left|\Delta_{13}\right|,
\quad\left|\Delta_{23}\right|
\end{equation}
are integer invariants of the $\PGL(2,\Z)$-action.
However, these invariants 
do not, in general, classify such triples.  
The missing datum is one residue modulo their common divisor.

\begin{thm}
\label{NFTThm}
Every ordered triple $(x_1,x_2,x_3)$ of distinct points of
$\pP^1(\Q)$ is $\PGL(2,\Z)$-equivalent to a unique triple
\begin{equation}
\label{NFormEq}
\left(\infty,\frac{u}{n},\frac{r}{m}\right)
\end{equation}
satisfying
$$
n,m>0,\qquad 0\leq u<n,\qquad
\gcd(u,n)=\gcd(r,m)=1,
$$
and
$$
um-rn>0.
$$
\end{thm}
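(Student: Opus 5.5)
Existence comes first. By the preceding proposition on pairs, I will move $(x_1,x_2)$ to $(\infty,u/n)$ with $n\geq1$, $0\leq u<n$ and $\gcd(u,n)=1$. Write the third point as $r/m$ in lowest terms with $m>0$. The third point cannot be $\infty$, since it is distinct from $x_1$.

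Note that $um-rn=\Delta_{23}$ for the normalized lifts $(u,n)$ and $(r,m)$, and it is nonzero because $x_2\neq x_3$. If $um-rn<0$, I will apply the reflection $x\mapsto -x$ followed by the translation $x\mapsto x+1$. This fixes $\infty$ and sends $u/n$ to $(n-u)/n$. When $n=1$ (so $u=0$), the reflection alone fixes $0/1$, and I use it without the translation. In either case the third point becomes $(m-r)/m$, respectively $-r/m$, and the new determinant is $-(um-rn)>0$. Primitivity and positivity of the denominators are preserved, so existence follows.

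For uniqueness, suppose $M\in\PGL(2,\Z)$ sends one normal form $(\infty,u/n,r/m)$ to another $(\infty,u'/n',r'/m')$. Since $M$ fixes $\infty$, it has the form $x\mapsto\varepsilon x+k$ with $\varepsilon=\pm1$ and $k\in\Z$. Such a map preserves denominators of reduced fractions, so $n'=n$ and $m'=m$. It acts on the primitive lifts by the matrix $\begin{pmatrix}\varepsilon&k\\0&1\end{pmatrix}$, up to the sign needed to keep the denominator positive, which here is $+1$. Hence it multiplies $\Delta_{23}$ by $\det=\varepsilon$. The positivity condition $um-rn>0$ on both sides therefore forces $\varepsilon=1$. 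Then $u'\equiv u \pmod n$ with $0\leq u,u'<n$ gives $k=(u'-u)/n$ and hence $u'=u$, $k=0$. Finally $r'=r+km=r$.

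The only delicate point is the bookkeeping of the sign of $\Delta_{23}$ under the matrix action together with the denominator-positivity normalization; I expect this to be the main (mild) obstacle. The case $n=1$ also needs a brief separate check. There the admissible $u$ is unique, and the reflection fixes $0$, so the argument goes through unchanged.
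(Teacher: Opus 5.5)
Your proof is correct and follows essentially the same route as the paper: send $x_1$ to $\infty$, use the reflection $x\mapsto -x$ to make $\Delta_{23}=um-rn$ positive and an integral translation to put $0\leq u<n$. For uniqueness, the stabilizer of $\infty$ consists of the maps $x\mapsto\varepsilon x+k$; positivity of $\Delta_{23}$ forces $\varepsilon=1$, and the range of $u$ forces $k=0$. The only difference is cosmetic: you normalize $u$ first and then pair the reflection with a translation by $1$, whereas the paper reflects first and translates afterwards.
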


\begin{proof}
By transitivity of the $\PGL(2,\Z)$-action, 
there exists an
element of $\PGL(2,\Z)$ that sends $x_1$ to $\infty$.  
Write the other two
points in lowest terms as $a/n$ and $b/m$, with $n,m>0$, and consider
$$
\Delta_{23}=am-bn\neq0.
$$
Applying $x\mapsto-x$ if necessary makes
$\Delta_{23}>0$.
Finally, a unique integral translation (which belongs to the stabilizer of $\infty$) then puts the numerator
of the second point in the range $0\leq u<n$.  
This gives the stated normal form.

Any transformation fixing $\infty$ has the form
$x\mapsto\varepsilon x+k$.  
Positivity of $\Delta_{23}$ forces
$\varepsilon=1$, and the condition $0\leq u<n$ forces $k=0$.
Hence the normal form is unique.
\end{proof}

\begin{rem}
Note that the Pl\"ucker determinants of the normal form~\eqref{NFormEq}
are
$$
\Delta_{12}=n,
\qquad
\Delta_{13}=m,
\qquad
\Delta_{23}=um-rn.
$$
It follows that there are finitely many $\PGL(2,\Z)$-orbits with fixed invariants~\eqref{PiT}.
\end{rem}

\subsection{Triples: classification by residue}
The normal form admits the following equivalent description in terms of a residue class.
Fix the three Pl\"ucker invariants~\eqref{PiT}. 
The corresponding $\PGL(2,\Z)$-orbits
are classified by one residue class modulo
the greatest common divisor of the three invariants.

Let us put
$$
g:=\gcd(|\Delta_{12}|,|\Delta_{13}|,|\Delta_{23}|).
$$
Recall that $\Delta_{12}=n,
\Delta_{13}=m,
\Delta_{23}=um-rn$. 
Let
$$
\Delta_{12}=gN,
\qquad \Delta_{13}=gM,
\qquad \Delta_{23}=gD.
$$
From the primitivity conditions one obtains
$$
\gcd(n,\Delta_{23})=\gcd(n,m)=\gcd(m,\Delta_{23})=g.
$$
Hence $N,M,D$ are pairwise coprime.  

Let $u_0$ be the unique integer
such that
$$
0\leq u_0<N,
\qquad
u_0M\equiv D\pmod N.
$$
For the canonical normal form~\eqref{NFormEq} there is a unique residue
$\kappa\pmod g$ such that
$$
u=u_0+N\kappa.
$$
Note also that the representative $\kappa$ of the residue class can be chosen 
such that $0\leq\kappa<g$.
Put
$$
r=\frac{uM-D}{N}.
$$
The residue $\kappa$ is called admissible if
$\gcd(u,g)=\gcd(r,g)=1$.

The following statement provides a classification by one residue.

\begin{thm}
\label{ResClassThm}
The $\PGL(2,\Z)$-orbit is completely determined by the positive
 Pl\"ucker data
$$
(\Delta_{12},\Delta_{13},\Delta_{23})=(n,m,um-rn)
$$
of the canonical normal form, together with an admissible residue class $\kappa\;(\!\!\!\mod g)$.
Conversely, every admissible residue class determines a unique orbit.
\end{thm}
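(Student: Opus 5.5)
The plan is to reduce Theorem~\ref{ResClassThm} to the normal form of Theorem~\ref{NFTThm}. The normal form already gives a unique representative $(\infty,u/n,r/m)$ with $0\le u<n$ and $um-rn>0$. So it suffices to show that the normal-form data $(n,m,u,r)$ and the data (Pl\"ucker triple, admissible $\kappa$) determine each other bijectively. The Pl\"ucker data $(n,m,um-rn)$ are read off directly from the normal form, as in the remark after Theorem~\ref{NFTThm}. It remains to show that, with these data fixed, the pair $(u,r)$ is equivalent to an admissible residue $\kappa \pmod g$.

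\emph{Forward direction.} Starting from a normal form, set $g=\gcd(n,m,D_0)$ with $D_0=um-rn$, and write $n=gN$, $m=gM$, $D_0=gD$. Dividing $um-rn=D_0$ by $g$ gives $uM-rN=D$, hence $uM\equiv D\pmod N$. We first check that $N,M,D$ are pairwise coprime. The paper states this using $\gcd(u,n)=\gcd(r,m)=1$: for instance, a prime dividing $N$ and $M$ beyond $g$ would divide $uM-rN=D$, contradicting the definition of $g$. Since $\gcd(M,N)=1$, the congruence determines $u$ modulo $N$, so $u=u_0+N\kappa$ with $u_0$ as defined. The condition $0\le u<n=gN$ then forces $0\le\kappa<g$, so $\kappa$ is a well-defined residue mod $g$. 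Next, $r=(uM-D)/N$ is determined by $u$, and coprimality of $u$ with $n$ and of $r$ with $m$ gives admissibility.

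\emph{Converse.} Given positive data $(n,m,D_0)$ arising from some triple (so that $N,M,D$ are pairwise coprime), and an admissible $\kappa$ with $0\le\kappa<g$, define $u=u_0+N\kappa$ and $r=(uM-D)/N$. This $r$ is an integer by the choice of $u_0$. We have $0\le u<n$ and $um-rn=g(uM-rN)=gD>0$. The main obstacle is verifying the primitivity conditions $\gcd(u,n)=\gcd(r,m)=1$ from admissibility, which only controls primes dividing $g$.

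For a prime $p\mid N$ with $p\nmid g$: if $p\mid u$, then $p\mid uM-rN=D$, contradicting $\gcd(N,D)=1$. Similarly, for $p\mid M$ with $p\nmid g$: if $p\mid r$, then $p\mid D$, contradicting $\gcd(M,D)=1$. Primes dividing $g$ are exactly excluded by the condition $\gcd(u,g)=\gcd(r,g)=1$. Thus $(\infty,u/n,r/m)$ is a valid normal form. Uniqueness in Theorem~\ref{NFTThm} then shows that distinct admissible residues give distinct orbits, since they give distinct $u$. The two constructions are mutually inverse, which proves the bijection.
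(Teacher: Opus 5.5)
Your proposal is correct and follows essentially the same route as the paper. Both arguments reduce to the normal form of Theorem~\ref{NFTThm}, use pairwise coprimality of $N,M,D$ to solve $uM\equiv D\pmod N$ as $u=u_0+N\kappa$, and note that $uM-rN=D$ forces $\gcd(u,N)=\gcd(r,M)=1$, so admissibility supplies exactly the remaining primitivity at primes dividing $g$. Two small points: you only spelled out $\gcd(N,M)=1$, but $\gcd(N,D)=\gcd(M,D)=1$ follow similarly from $\gcd(u,n)=\gcd(r,m)=1$ and maximality of $g$; and your restriction to primes $p\nmid g$ in the converse is unnecessary.
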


\begin{proof}
From $\Delta_{23}=um-rn$ and the primitivity conditions one obtains
$$
\gcd(n,\Delta_{23})=\gcd(n,m)=\gcd(m,\Delta_{23})=g.
$$
Hence $N,M,D$ are pairwise coprime.  
Dividing the relation $\Delta_{23}=um-rn$ by $g$ gives
$$
D=uM-rN,
$$
so $uM\equiv D\pmod N$.  
Since $M$ is invertible modulo $N$, this
fixes $u$ modulo $N$, and the range $0\leq u<gN$ gives the unique
expression $u=u_0+N\kappa$.  
The displayed formula then determines~$r$.  
The congruence makes $r$ integral; moreover it
implies $\gcd(u,N)=\gcd(r,M)=1$, so the two conditions $\gcd(u,g)=\gcd(r,g)=1$ 
are precisely the remaining primitivity conditions.
The canonical normal form now proves completeness.
\end{proof}

\begin{rem}
Conversely, we have the following way to reconstruct the normal form.
Let \(0\leq\kappa<g\) be the representative of an admissible residue class.
It reconstructs the triple by
$$
u=u_0+N\kappa,
\qquad
r=\frac{uM-D}{N},
$$
where admissibility means
$\gcd(u,g)=\gcd(r,g)=1$.
\end{rem}

If $g=1$, the residue disappears and the three integers
$\left|\Delta_{12}\right|,
\left|\Delta_{13}\right|,
\left|\Delta_{23}\right|$ alone classify the ordered triple.

\subsection{Ordered quadruples}
\label{subsec:quadruple-normal-form}

The classification of ordered triples immediately gives an asymmetric but
complete classification of ordered quadruples.

\begin{cor}
\label{cor:quadruple-normal-form}
Every ordered quadruple $(x_1,x_2,x_3,x_4)$ of pairwise distinct points of
$\pP^1(\Q)$ is $\PGL(2,\Z)$-equivalent to a unique quadruple
\begin{equation}\label{eq:quadruple-normal-form}
 \left(\infty,\frac{u}{n},\frac{r}{m},z\right),
\end{equation}
where
$$
 n,m>0,\qquad 0\leq u<n,\qquad
 \gcd(u,n)=\gcd(r,m)=1,\qquad um-rn>0,
$$
and $z$ is a (finite) rational such that
$$
 z\in\Q\setminus
 \left\{\frac{u}{n},\frac{r}{m}\right\}.
$$
\end{cor}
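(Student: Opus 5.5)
The plan is to reduce to Theorem~\ref{NFTThm} applied to the first three points, and then deal with the fourth point through the stabilizer of the normalized triple. Given $(x_1,x_2,x_3,x_4)$, Theorem~\ref{NFTThm} supplies $M\in\PGL(2,\Z)$ such that $M(x_1,x_2,x_3)=\left(\infty,\frac un,\frac rm\right)$, with all the stated conditions on $n,m,u,r$ satisfied. Put $z:=M(x_4)$. The points are pairwise distinct and $M$ is a bijection of $\pP^1(\Q)$, so $z$ differs from $\infty$, $\frac un$ and $\frac rm$. Hence $z$ is a finite rational outside $\{\frac un,\frac rm\}$, and existence follows.

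For uniqueness, suppose $\left(\infty,\frac un,\frac rm,z\right)$ and $\left(\infty,\frac{u'}{n'},\frac{r'}{m'},z'\right)$ are both in normal form and are related by some $g\in\PGL(2,\Z)$. Then $g$ sends the first normalized triple to the second. By the uniqueness part of Theorem~\ref{NFTThm}, the two triples coincide, so $(n,u,m,r)=(n',u',m',r')$. Consequently $g$ lies in the pointwise stabilizer of three distinct points of $\pP^1(\Q)$.

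The key step is to show that this stabilizer is trivial. A fractional-linear map fixing three distinct points is the identity, because $\PGL(2,\Q)$ acts sharply $3$-transitively, and $\PGL(2,\Z)\subset\PGL(2,\Q)$. Alternatively, one can see it directly: fixing $\infty$ forces $g(x)=\varepsilon x+k$, and fixing two further distinct finite points forces $\varepsilon=1$, $k=0$. Hence $g=\id$ and $z'=g(z)=z$.

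No step is a real obstacle. The only care needed is to note that uniqueness of the triple normal form forces $g$ to fix the triple pointwise, rather than merely mapping it to itself, and this is immediate because the map is ordered.
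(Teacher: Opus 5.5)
Your proposal is correct and follows essentially the same route as the paper: apply Theorem~\ref{NFTThm} to the first three points, set $z$ to be the image of $x_4$, and obtain uniqueness from the uniqueness of the triple normal form together with the fact that a projective transformation fixing three distinct points is the identity. Your phrasing of uniqueness (two equivalent normal forms must coincide) is equivalent to the paper's (two normalizing transformations differ by such a transformation). Your direct check via $x\mapsto\varepsilon x+k$ is a harmless extra.
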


\begin{proof}
Apply Theorem~\ref{NFTThm} to the ordered triple $(x_1,x_2,x_3)$.  This produces a
unique normal form
$$
 \left(\infty,\frac{u}{n},\frac{r}{m}\right).
$$
Apply the same element of $\PGL(2,\Z)$ to $x_4$, and denote its image by
$z$.  Since $x_4\neq x_1$, the point $z$ is finite.  Since the four original
points are pairwise distinct, $z$ is different from $u/n$ and $r/m$.

It remains only to prove uniqueness of $z$.  Any two modular transformations
which put the first three points into the same ordered normal form differ by
a projective transformation fixing three distinct points.  Such a projective
transformation is the identity.  Hence the image $z$ of the fourth point is
unique.
\end{proof}

\begin{rem}
\label{rem:quadruple-reconstruction}
The fourth coordinate in~\eqref{eq:quadruple-normal-form} can be reconstructed
from the normal-form triple and the classical cross-ratio.  Indeed, if
\[
 \lambda=
 \crr\left(\infty,\frac{u}{n};\frac{r}{m},z\right),
\]
then
\begin{equation}
\label{eq:reconstruct-classical-fourth}
 \lambda=
 \frac{\frac{u}{n}-z}{\frac{u}{n}-\frac{r}{m}},
 \qquad
 z=\frac{u}{n}-\lambda
 \left(\frac{u}{n}-\frac{r}{m}\right).
\end{equation}
Thus the normal-form data of the first triple together with the classical
cross-ratio form a complete invariant of ordered quadruples modulo
$\PGL(2,\Z)$.
\end{rem}

\begin{prop}
\label{prop:fixed-plucker-finite}
There are only
finitely many $\PGL(2,\Z)$-orbits of ordered quadruples of pairwise distinct
rational points with fixed absolute values $|\Delta_{ij}|$, where $1\leq i<j\leq4$.  
More precisely, if
\[
 g=\gcd(|\Delta_{12}|,|\Delta_{13}|,|\Delta_{23}|),
\]
then the number of such orbits is at most $2g$. 
\end{prop}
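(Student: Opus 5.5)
The plan is to reduce to the normal form of Corollary~\ref{cor:quadruple-normal-form} and count the data left free once the six absolute Pl\"ucker determinants are fixed. Every ordered quadruple is $\PGL(2,\Z)$-equivalent to a unique $\left(\infty,\frac{u}{n},\frac{r}{m},z\right)$ as in~\eqref{eq:quadruple-normal-form}. Since the absolute Pl\"ucker determinants are invariants, they can be read off the normal form. With $X_1=(1,0)^T$, $X_2=(u,n)^T$, $X_3=(r,m)^T$ and $X_4=(c,d)^T$, where $z=c/d$ in lowest terms and $d>0$, we get
\[
|\Delta_{12}|=n,\quad |\Delta_{13}|=m,\quad |\Delta_{14}|=d,\quad |\Delta_{23}|=um-rn,\quad |\Delta_{24}|=|ud-cn|,\quad |\Delta_{34}|=|rd-cm|.
\]
The task is therefore to show that, with these six numbers fixed, at most $2g$ tuples $(n,m,u,r,c,d)$ satisfy the normal-form constraints.

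First, the triple part. By Theorem~\ref{ResClassThm}, the numbers $n,m,um-rn$ fix $N,M,D$ and $u_0$, and the normal form of $(x_1,x_2,x_3)$ is then determined by a residue $\kappa$ with $0\le\kappa<g$. So there are at most $g$ choices for $(u,r)$; in fact $u=u_0+N\kappa$ and $r=(uM-D)/N$.

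Second, fix such a triple and count the possible $z$. The denominator $d=|\Delta_{14}|$ is fixed. The condition $|ud-cn|=|\Delta_{24}|$ gives $cn=ud\mp|\Delta_{24}|$, so $c$ takes at most two values. The remaining condition $|rd-cm|=|\Delta_{34}|$ can only cut these down further. Hence each triple admits at most $2$ values of $z=c/d$, and the total number of orbits is at most $2g$.

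I expect no real obstacle here. The only points needing care are the following. The uniqueness in Corollary~\ref{cor:quadruple-normal-form} must be used to turn the count of normal forms into a count of orbits. The sign conventions, with $d>0$ and the ordered triple normalized so that $um-rn>0$, must be applied consistently. One should also note that $\Delta_{23}$ is recorded as a positive number in the normal form, so its absolute value coincides with the value used in Theorem~\ref{ResClassThm}.
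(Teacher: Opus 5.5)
Your proof is correct and is essentially the paper's argument: pass to the unique normal form of Corollary~\ref{cor:quadruple-normal-form}, bound the number of triple orbits by $g$ via Theorem~\ref{ResClassThm}, and show that the fourth point has at most two possibilities. The only difference is in the factor $2$. You read it off directly from $d=|\Delta_{14}|$ and $cn=ud\mp|\Delta_{24}|$. The paper instead notes that the six absolute determinants fix $|\crr|$, hence $\crr$ up to sign, and then reconstructs $z$ from $\crr$ via~\eqref{eq:reconstruct-classical-fourth}.
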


\begin{proof}
By Theorem~\ref{NFTThm} and its finiteness remark, the three values
$|\Delta_{12}|,|\Delta_{13}|,|\Delta_{23}|$ allow only finitely many orbits of the ordered first
triple.  Choose the unique normal form for each of these finitely many triple
orbits.  The classical cross-ratio satisfies~\eqref{ClassEq}.
Thus the six absolute determinants allow at most two values of the classical
cross-ratio.  For each fixed normal-form triple and each such value, the
fourth point is uniquely reconstructed.
By Theorem~\ref{ResClassThm}, the first three absolute determinants give at most $g$
triple orbits, one for each admissible residue $\kappa\pmod g$.  Therefore
there are at most $2g$ quadruple orbits.  If all six signed determinants are
fixed, the classical cross-ratio itself, rather than only its absolute value,
is fixed, and the bound improves to~$g$.
\end{proof}

\begin{rem}
Proposition~\ref{prop:fixed-plucker-finite} explains precisely how
Proposition~\ref{PProp} should be interpreted.  
The six normalized Pl\"ucker
determinants need not determine one orbit, and they need not determine
$\crr''$; nevertheless, they leave only finitely many possible orbits. 
\end{rem}

\bigskip

\noindent{\bf Acknowledgments}.
This project took off during the conference
{\it Billiards and Stars: Geometry and Dynamics},
held in honor of the seventieth birthdays of Richard Montgomery and Sergei Tabachnikov.
I am grateful to the
Centro de Investigaci\'on en Matem\'aticas in Guanajuato, Mexico, for its hospitality.
I am indebted to 
Pavel Etingof, Sophie Morier-Genoud, and Sergei Tabachnikov for enlightening discussions.

ChatGPT (OpenAI) was used for exploratory computations, literature searches, and language editing. 
All mathematical arguments and computations were independently checked and rewritten.
I assume full responsibility for the content.


\bigskip

\noindent
{Valentin Ovsienko,
Centre National de la Recherche Scientifique,
Laboratoire de Math\'e\-ma\-tiques,
Universit\'e de Reims Champagne Ardenne,
Moulin de la Housse - BP 1039,
51687 Reims cedex 2,
France},\\
{Email:  valentin.ovsienko@univ-reims.fr}

\end{document}